%
%
%
%
%

%
\documentclass[smallextended]{svjour3}       
\smartqed  
\usepackage{graphicx}
%
%
%
%
%
\usepackage{ulem}
\usepackage{subfigure}
\usepackage{graphicx}
\usepackage{enumerate}
\usepackage{amsmath,bm}
\usepackage{latexsym}
\usepackage{float}
\usepackage{latexsym}
\usepackage{amsmath}
\usepackage{amssymb}
\usepackage{amsfonts}
\usepackage{verbatim}
\usepackage{mathrsfs}
\usepackage{amsfonts}
\usepackage{colortbl,dcolumn}
\usepackage{amsmath}
\usepackage{amsfonts}
\usepackage{graphicx}
\usepackage{epstopdf}
\usepackage{algorithmic}

\allowdisplaybreaks[4]

\numberwithin{equation}{section}
\newtheorem{tm}{Theorem}
\newtheorem{rk}{Remark}

\newtheorem{prop}{Proposition}
\newtheorem{lm}{Lemma}
\newtheorem{cor}{Corollary}

\newcommand{\E}{\mathbb E}
\newcommand{\PP}{\mathbb P}
\newcommand{\N}{\mathbb N}
\newcommand{\R}{\mathbb R}

\newcommand{\bs}{\mathbf s}

\newcommand{\OO}{\mathcal O}

\newcommand{\HH}{\mathbb H}
\newcommand{\LL}{\mathcal L}

\newcommand{\FFF}{\mathscr F}

\newcommand{\<}{\langle}
\renewcommand{\>}{\rangle}
\allowdisplaybreaks[4]

\begin{document}

\title{Numerical analysis of a full discretization for stochastic Cahn--Hilliard equation driven by additive noise
}

\titlerunning{A full discretization  for SCHE driven by additive noise }        

\author{Jianbo Cui        \and
        Jialin Hong \and
        Liying Sun
}


\institute{Jianbo Cui \at
             School of Mathematics, Georgia Institute of Technology, Atlanta, GA 30332, USA \\
              \email{jcui82@gatech.edu}           
           \and
           Jialin Hong \at
              1. LSEC, ICMSEC, 
Academy of Mathematics and Systems Science,\\ Chinese Academy of Sciences,
 Beijing, 100190, China\\
2. School of Mathematical Science, University of Chinese Academy of Sciences,\\ Beijing, 100049, China \\
              \email{hjl@lsec.cc.ac.cn} 
              \and
                    Liying Sun \at
              1. LSEC, ICMSEC, 
Academy of Mathematics and Systems Science,\\ Chinese Academy of Sciences,
 Beijing, 100190, China\\
2. School of Mathematical Science, University of Chinese Academy of Sciences,\\ Beijing, 100049, China \\
              \email{liyingsun@lsec.cc.ac.cn}  
}

\date{}

\maketitle

\begin{abstract}
In this article, we consider the stochastic Cahn--Hilliard equation driven by additive noise. We discretize this equation by using a spatial spectral Galerkin method and a temporal accelerated implicit Euler method. We first present the optimal regularity estimates of both the exact and numerical solutions. 
Then we prove that the proposed numerical method
is strongly convergent with the sharp convergence rate in a negative Sobolev space. 
With the help of the semigroup theory and interpolation inequality, we deduce the spatial optimal convergence rate and the temporal super-convergence rate of the proposed numerical method in strong sense.  

 \keywords{stochastic Cahn--Hilliard equation \and spectral Galerkin method \and accelarated implicit Euler method  \and strong convergence rate }
 \subclass{60H35 \and 35R60 \and 60H15 }
\end{abstract}

\section{Introduction}

This paper focuses on the numerical analysis of a full discretization for the stochastic Cahn--Hilliard equation with polynomial nonlinearity and additive noise.
The stochastic Cahn--Hilliard  equation, as a phenomenological model for studying the spinodal decomposition,  describes the evolution of a concentration.
More precisely, the concentration $X$ satisfies 
\begin{align}\label{spde}
dX(t)+A(AX(t)+f(X(t)))dt&=dW(t),\quad t\in (0,T]\\\nonumber
X(0)&=X_0,
\end{align}
with $0<T<\infty$, $\mathcal O=[0,L]^d$, $L>0,$ $d\le 3$ and $A:D(A)\subset H:=L^2(\OO) \to H$ being the Neumann Laplacian operator.
In addition, $\{W(t)\}_{t\ge 0}$ is a generalized $Q$-Wiener process on a filtered probability space $(\Omega,\mathcal F,\{\mathcal F_t\}_{t\ge 0},\PP)$ and the nonlinearity $f$ is the Nemyskii operator of $F'$, where the energy functional $F$ is a polynomial of degree 4, i.e., $c_4\xi^4+c_3\xi^3+c_2\xi^2+c_1\xi+c_0$ with $c_i\in \R$, $i=0,\cdots,4$, $c_4>0$.
A typical example is $F=\frac 14(\xi^2-1)^2$, which is double well potential and corresponds to the Cahn--Hilliard--Cook equation. 
The stochastic Cahn--Hilliard equation 
has been the object of several mathematical studies dealing with well-posedness (see e.g. \cite{DD96,DDT04,EM91}) or numerical approximations (see e.g. \cite{CCZZ18,FKLL18,HJ14,KZ13,KLM11,LM11}).
In \cite{CCZZ18,KZ13,LM11}, the strong convergence rate of numerical schemes for the linearized stochastic Cahn--Hilliard equation is considered.
For the non-globally Lipschitz coefficient case, \cite{FKLL18,KLM11} show the strong convergence and obtain the convergence rate in a large subsample space of the finite element method and its implicitly full discretization for equation \eqref{spde} driven by spatial regular noise.
\cite{HJ14} studies both the exponential integrability and  the strong convergence rate of the spectral Galerkin method for one 
dimensional equation \eqref{spde} driven by the trace class noise.
To the best of our knowledge, there exists no result about 
the strong convergence rates of the temporal discretization and full discretization for the stochastic Cahn--Hilliard equation driven by rough noise, like the space-time white noise.

The present work aims to make further contributions on the strong convergence rates of numerical discretization for stochastic Cahn--Hilliard equation \eqref{spde} driven by  space-time white noise.
More precisely, we present the optimal strong convergence rates of both the spatial spectral Galerkin method
\begin{align}\label{sge}
dX^N(t)+A(AX^N(t)+P^Nf(X^N(t)))dt&=P^NdW(t),\\\nonumber
X^N(0)&=P^NX_0,
\end{align}
and the accelerated implicit Euler full discretization
\begin{align*}
Y^N_{k+1}&=Y^N_k-A^2Y^N_{k+1}\delta t
-AP^Nf(Y^N_{k+1}+Z^N(t_{k+1}))\delta t,\\\nonumber
X^N_{k+1}&=Y^N_{k+1}+Z^N(t_{k+1}), \; k\le K-1
\end{align*} 
where $P^N,$  $N\in \N_+,$ is the spectral Galerkin projection, $Z^N$ is the spectral projection of the stochastic convolution $Z(t)=\int_0^t\exp(-A^2(t-s))dW(s)$ and $\delta t$ is the time stepsize such that $T=K\delta t$, $K\in \N^+.$

To obtain the strong convergence rate of the above numerical solutions, we use an interpolation approach, rather than study the strong convergence problem in the Hilbert space $H:=L^2(\mathcal{O})$ directly.
The main idea of the interpolation approach is firstly deducing the optimal regularity estimates of the exact and numerical solutions in strong sense. 
Then we investigate the strong convergence in a negative Sobolev space, 
and obtain the optimal strong convergence error estimate in such space.
Combining the established optimal regularity estimate and error estimate in the negative Sobolev space, with the interpolation inequality of the Sobolev spaces, we are able to obtain the optimal strong convergence rate of numerical scheme.

For the strong convergence analysis of the spatial spectral Galerkin method, 
we first show the optimal spatial and temporal regularity estimates of the exact and numerical solutions. 
Inspired by the fact that the deterministic Cahn--Hilliard equation defines a gradient flow in $\HH^{-1}$ for the energy functional, we focus on the strong convergence problem in $\HH^{-1}$.
By using the equivalence  between  equation \eqref{spde} and the system which consists of a random PDE and a stochastic convolution $Z(t)$, we deduce the sharp strong convergence error estimate in $\HH^{-1}$.
Then based on the Sobolev interpolation inequality and the smoothing effect of the semigroup, we recover the optimal strong convergence rate of the spectral Galerkin method for equation \eqref{spde} driven by space-time white noise.
Let $(I-\mathbb L) X_0\in \HH^{\gamma}$ for some $\gamma \in (0,\frac 32)$, $N\in \N^+$ and  $p\ge1$, where $\mathbb L$ is the projection operator satisfying $\mathbb L v=|\mathcal O|^{-1}\int_{\mathcal O}vdx$, $v\in H$.
The numerical solution $X^N$ is shown to strongly converge to $X$ and satisfies 
\begin{align*}
\big\|X^N(t)-X(t)\big\|_{L^p(\Omega;{H})}
&\le C(X_0,T,p)\lambda_N^{-\frac \gamma 2}
\end{align*}
for a positive constant $C(X_0,T,p)$.

For the accelerated full discretization, we first prove the optimal regularity of the full discretization following the interpolation approach.
Then by introducing an auxiliary process $\widetilde Y^N_k$, $k\le K$, we divide the temporal error in $\HH^{-1}$ into two parts, $\|Y^N(t_k)-\widetilde Y^N_k\|_{\HH^{-1}}$ and $\|\widetilde Y^N_k- Y^N_k\|_{\HH^{-1}}$. Based on the monotonicity of 
$-f$ and the interpolation arguments, we obtain the strong convergence of the proposed full discretization, i.e., let $(I-\mathbb L) X_0\in \HH^{\gamma}$ for all $\gamma \in (0,\frac 32)$, $N\in \N^+$ and  $p\ge1$, then
the numerical solution $X^N_k$ is strongly convergent to $X$  and  satisfies 
\begin{align*}
\big\|X^N_k-X(t_k)\big\|_{L^p(\Omega;{H})}
&\le C(X_0,T,p)(\delta t^{(\frac \gamma 2)^-}+\lambda_N^{-\frac \gamma 2})
\end{align*}
for a positive constant $ C(X_0,T,p)$.
We remark that this approach is also available for deducing the strong convergence rates of numerical schemes for equation \eqref{spde} driven by general noise for equation \eqref{spde} with $d\le 3$ (see Section \ref{sec-oth}).

The outline of this paper is as follows. In the next section,
some preliminaries are listed.
Section \ref{sec-pri} is devoted to giving the useful regularity and a priori estimates of equation \eqref{spde}.
In Section \ref{sec-str}, we prove some a priori estimates of the numerical solutions, and use the interpolation approach to study the strong convergence rates of both the spectral Galerkin method and its accelerated implicit full discretization.
Some applications of the interpolation approach to
the cases $d\le 3$ and general noises are presented in Section \ref{sec-oth}.

\section{Stochastic Cahn--Hilliard equation with cylindrical Wiener process}
\label{sec-pri}
In this section, we present both the optimal spatial and temporal regularity estimates of the exact solution $X$ for one dimensional stochastic Cahn--Hilliard equation \eqref{spde} driven by space-time white noise , that is, $d=1$ and $Q=I$.
For the well-posedness of the exact solution, we refer to \cite{AKM16,DD96} and references therein.

To investigate the regularity estimates, we introduce some notations.
Given two separable Hilbert spaces $(\mathcal H, \|\cdot \|_{\mathcal H})$ and $(\widetilde  H,\|\cdot \|_{\widetilde H})$, 
$\LL(\mathcal H, \widetilde H)$ and $\LL_1(\mathcal H, \widetilde H)$ are the Banach space of all linear bounded operators 
and that of the nuclear operators from $\mathcal H$ to $\widetilde H$, respectively. 
The trace of an operator $\mathcal T\in \LL_1(\mathcal H)$
is $tr[\mathcal T]=\sum_{k\in \N^+}\<\mathcal Tf_k,f_k\>_{\mathcal H}$, where $\{f_k\}_{k\in \N^+}$ is any orthonormal basis of $\mathcal H$.
Denote by $\LL_2(\mathcal H,\widetilde H)$ the space 
of Hilbert--Schmidt operators from $\mathcal H$ into $\widetilde H$, equipped with the usual norm given by  $\|\cdot\|_{\LL_2(\mathcal H,\widetilde H)}=(\sum_{k\in \N^+}\|\cdot f_k\|^2_{\widetilde H})^{\frac{1}{2}}$.

Given a Banach space $(\mathcal E,\|\cdot\|_{\mathcal E})$, denote $\gamma( \mathcal H, \mathcal E)$ the space of $\gamma$-radonifying operators endowed with the norm
$\|\mathcal T\|_{\gamma(\mathcal  H, \mathcal E)}=(\widetilde \E\|\sum_{k\in\N^+}\gamma_k \mathcal Tf_k \|^2_{\mathcal E})^{\frac 12}$,
where $(\gamma_k)_{k\in\N^+}$ is a sequence of independent $\mathcal N(0,1)$-random variables on a
probability space $(\widetilde \Omega,\widetilde \FFF, \widetilde \PP)$.
For convenience,
let $\{e_k\}_{k\in \N}$ be an orthonormal basis of $H:=L^2(\OO)$ equipped with $\|\cdot\|=\|\cdot\|_{H}$ and $\<\cdot,\cdot\>=\<\cdot,\cdot\>_{H}$.
Denote $L^q:=L^q(\OO)$, $2\le q<\infty$ and $E:=\mathcal C(\OO)$ equipped with the usual norms.
For $H$-valued cylindrical 
Wiener process, we have the following Burkerholder inequality in $L^q,$ 
\begin{align}\label{Burk}
\left\|\sup_{t\in [0,T]}\Big\|\int_0^t \phi(r)d\widetilde W(r)\Big\|_{L^q}\right\|_{L^p(\Omega)}
&\le 
C_{p}\|\phi\|_{L^p(\Omega;  L^2([0,T]; \gamma(H;L^q))}\\\nonumber 
&\le  C_{p}\Big(\E\Big(\int_0^T\Big\|\sum_{k\in \N^+} (\phi(t) e_k)^2\Big\|_{L^{\frac q2}}dt\Big)^{\frac p2}\Big)^{\frac 1p}.
\end{align}

Denote $H^k:=H^k(\mathcal O)$ the standard Sobolev space and define $A:=-\Delta $ as the Neumann Laplacian operator with 
$$D(A)=\left\{v\in H^2(\mathcal O):\frac {\partial v}{\partial n}=0\;\; \text{on} \;\; \partial \mathcal O\right\}.$$
Let $\mathbb L$ be the projection $\mathbb L v:=|\mathcal O|^{-1}\int_{\mathcal O}vdx$, $v\in H$.
Let $\HH=(I-\mathbb L)H$.
It is known that $A$ is a positive definite, self-adjoint and unbounded linear operator on $\HH$.
By extending $A$ on $H$, $A$ has an orthonormal 
eigensystem $\{(\lambda_j,e_j)\}_{j\in \N}$ such that $0=\lambda_0<\lambda_1\le \cdots \le \lambda_j\le \cdots$ with $\lambda_j \sim j^{\frac 2d}$, $e_0=|\mathcal O|^{-\frac12}$.
Define 
$\HH^{\alpha}$, $\alpha\in \R$ as the space of the series $v:=\sum_{j\in\mathbb N_+}v_je_j$, $v_j\in \R$, such that
$\|v\|_{\HH^{\alpha}}:=(\sum_{j=1}^{\infty}\lambda_j^{\alpha}v_j^2)^{\frac 12}<\infty$. 
Equipped with the norm $\|\cdot\|_{\HH^{\alpha}}$ and corresponding inner product, the Hilbert space $\HH^{\alpha}$ equals $D(A^{\frac \alpha 2})$.
When $\bs=1,2$,  the norm $||\cdot||_{\mathbb L H\oplus \HH^{\bs}}$ of $\mathbb L H\oplus \HH^{\bs}$ is equivalent to the Sobolev norm $\|\cdot\|_{H^{\bs}}$, i.e., for $v\in \mathbb L H\oplus \HH^{\bs}$,
$$||v||_{\mathbb L H\oplus \HH^{\bs}}^2:=\|v\|^2_{\HH^\bs}+|\<v,e_0\>|^2=\|\nabla^{\bs}v\|^2+|\<v,e_0\>|^2\sim\|v\|_{H^{\bs}}.$$
 The following smoothing effect of the analytical semigroup $S(t)=e^{-t A^2},t>0$ (see e.g. \cite{EN00}), 
\begin{align}\label{smo-eff}
\|A^{\beta}S(t)v\|&\le Ct^{-\frac \beta 2}\|v\|, \;\beta>0, \; v\in \mathbb H,
\end{align}
and the contractivity property of $S(t)$ (see e.g. \cite[Appendix B]{PZ07}),
\begin{align}\label{con-pro}
\|S(t)v\|_{L^q}&\le Ct^{-\frac d{4}(\frac 1p-\frac 1q)} \|v\|_{L^p},\; 1\le p\le q<\infty ,\;  v\in L^p,\\\nonumber 
\|S(t)v\|_{E}&\le Ct^{-\frac d{4p}} \|v\|_{L^p}, v\in L^p,
\end{align}
will be used frequently.

Throughout this article, we denote the one-side Lipschitz coefficient of $f,$ by $L_f$ and give the following assumptions.
 In Sections 2 and 3, we assume that the driving noise corresponds to the space-time white noise, i.e., $d=1$, $Q=I$. Our approach is also available to the case that  
$Q$ commutes with $A$ and $\|A^{\frac {\gamma-2}2}Q^{\frac 12}\|_{\LL_2^0}<\infty$ for $\gamma\in (0,\frac 32)$. 
We consider the general space dimension case, i.e., $\mathcal O=[0,L]^d$, $d\le 3$ driven by additive noise satisfying $\|A^{\frac {\gamma-2}2}Q^{\frac 12}\|_{\LL_2^0}<\infty$ with $\gamma>0$  In Section 4.

For the space-time white noise case, it is known (see e.g. \cite{DD96}) that  the stochastic convolution $Z(t)$, as the solution of 
\begin{align*}
&dZ+A^2Zdt=dW(t),\; Z(0)=0,
\end{align*}
satisfies
\begin{align}\label{reg-con}
\E\Big[\sup_{t\in [0,T]}\|(I-\mathbb L)Z(t)\|_E^p\Big]+\E\Big[\sup_{t\in [0,T]}\|(I-\mathbb L)Z(t)\|_{\HH^{\gamma}}^p\Big]\le C(p,T).
\end{align}
for any $p\ge 1$ and $\gamma<\frac 32$, and 
\begin{align}\label{reg-con1}
\E\Big[\big\|Z(t)-Z(s)\big\|^p\Big]&\le C(T,p)(t-s)^{\frac {\gamma p} 4},
\end{align}
for $s\le t$.
Let $Y$ satisfy
\begin{align*}
&dY+(A^2Y+Af(Y+Z))dt=0,\; Y(0)=X_0.
\end{align*}
which yields that $X=Y+Z.$
Furthermore, based on the projection operator, $X$ has the decomposition
$X=\mathbb L X+(I-\mathbb L)X$,
where $\mathbb L X$ satisfies 
\begin{align*}
d\mathbb LX+A^2\mathbb LXdt
+A\mathbb L(f(X))dt=\mathbb LdW(t)
\end{align*}
and 
$(I-\mathbb L)X$ satisfies 
\begin{align*}
d(I-\mathbb L)X+A^2(I-\mathbb L)Xdt
+A(I-\mathbb L)(f(X))dt=(I-\mathbb L)dW(t).
\end{align*}

Based on the above decomposition, the a priori estimates of both $Z$ and $Y,$ we present the $p$th moment estimate of $\|X\|$ in the following lemma.

\begin{lm}\label{l2}
Let $X_0\in H$ and $p\ge1$.
There exists a unique mild solution  $X$  of equation \eqref{spde}  satisfying 
\begin{align}\label{p-mom}
\E\Big[\sup_{t\in [0,T]}\big\|X(t)\big\|^p\Big]
&\le C
\end{align}
for a positive constant $ C:=C(X_0,T,p)$.
\end{lm}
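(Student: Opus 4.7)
I will exploit the decomposition $X=Y+Z$ from Section~\ref{sec-2}, with $Z$ the stochastic convolution, to separate stochastic and nonlinear effects; existence and uniqueness of the mild solution are standard (see \cite{DD96,AKM16}), so only the moment bound requires work. First, because $\mathbb{L}A=0$ under Neumann boundary conditions, the mean $\mathbb{L}Z$ satisfies $d\mathbb{L}Z=\mathbb{L}dW$ and is a scaled real Brownian motion; combined with \eqref{reg-con} this gives $\E\sup_{t\le T}\|Z\|_E^p<\infty$ for every $p\ge1$. The task reduces to a pathwise bound on $Y$ in terms of $X_0$ and $M:=\sup_{t\le T}\|\mathbb{L}X_0+Z\|_E$.

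\emph{$\HH^{-1}$ energy estimate.} Since $\mathbb{L}Y\equiv\mathbb{L}X_0$ is conserved, I write $Y=\mathbb{L}X_0+\bar Y$ with $\bar Y:=(I-\mathbb{L})Y\in\HH$ and test the projected equation against $A^{-1}\bar Y$, obtaining the gradient-flow identity
\begin{equation*}
\tfrac12\tfrac{d}{dt}\|\bar Y\|_{\HH^{-1}}^2+\|\nabla\bar Y\|^2+\langle\bar Y,f(\bar Y+\psi)\rangle=0,\quad\psi:=\mathbb{L}X_0+Z.
\end{equation*}
The polynomial coercivity $\xi f(\xi)\ge c\xi^4-C$ and growth $|f(\xi)|\le C(1+|\xi|^3)$, together with Young's inequality applied to the splitting $\langle\bar Y,f(\bar Y+\psi)\rangle=\langle\bar Y+\psi,f(\bar Y+\psi)\rangle-\langle\psi,f(\bar Y+\psi)\rangle$, yield the pathwise bound $\sup_{t\le T}\|\bar Y\|_{\HH^{-1}}^2+\int_0^T(\|\nabla\bar Y\|^2+\|\bar Y\|_{L^4}^4)\,ds\le C(1+\|X_0\|^2+\int_0^T\|\psi\|_{L^4}^4\,ds)$, whose right-hand side is in $L^{p/2}(\Omega)$.

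\emph{Upgrade to $L^\infty_tL^2_x$.} A second energy estimate, testing against $\bar Y$ in $H$, gives $\tfrac12\tfrac{d}{dt}\|\bar Y\|^2+\|A\bar Y\|^2=\langle\Delta\bar Y,f(\bar Y+\psi)\rangle$. I split the right-hand side as $\langle\Delta\bar Y,f(\bar Y)\rangle+\langle\Delta\bar Y,f(\bar Y+\psi)-f(\bar Y)\rangle$; integration by parts on the first term gives $-\int f'(\bar Y)|\nabla\bar Y|^2$, and the bound $f'(\xi)\ge\alpha\xi^2-\beta$ (from $c_4>0$) produces the favourable dissipation $-\alpha\int\bar Y^2|\nabla\bar Y|^2+\beta\|\nabla\bar Y\|^2$. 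The perturbation is bounded via $|f(a+b)-f(a)|\le C(1+a^2+b^2)|b|$ and the pathwise bound $\|\psi\|_E\le M$ by $\tfrac12\|A\bar Y\|^2+CM^2(1+M^4+\|\bar Y\|_{L^4}^4)$. Integrating and invoking the previous controls on $\int_0^T\|\nabla\bar Y\|^2\,ds$ and $\int_0^T\|\bar Y\|_{L^4}^4\,ds$ yields $\sup_{t\le T}\|\bar Y\|^2\le C(X_0,T)(1+M^6)$ pathwise; taking $L^p(\Omega)$ completes the proof.

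\textbf{Main obstacle.} The central difficulty is the combination of the super-linear cubic $f$ with the low spatial regularity of space-time white noise, which forbids any pathwise integration by parts against $\nabla Z$. Both energy estimates are engineered to avoid $\nabla Z$: the first works in $\HH^{-1}$ where the Cahn--Hilliard equation is a gradient flow and $\nabla Z$ never appears, and the second leverages the dissipation from $f'(\bar Y)$ together with the pathwise $E$-bound on $\psi$ to control the perturbation purely pointwise.
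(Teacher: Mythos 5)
Your proposal is correct and follows essentially the same route as the paper: the decomposition $X=Y+Z$ with $\mathbb L Y$ conserved, a first energy estimate in $\HH^{-1}$ exploiting the quartic coercivity of $f$ to control $\int_0^T\|\nabla (I-\mathbb L)Y\|^2ds$ and $\int_0^T\|(I-\mathbb L)Y\|_{L^4}^4ds$, and then an $L^2$ energy estimate using the lower bound on $f'$ and the pathwise $E$-bound on the stochastic convolution. The only differences are cosmetic (you split $f(\bar Y+\psi)$ as $f(\bar Y)$ plus a perturbation absorbed into $\|A\bar Y\|^2$, while the paper works directly with $f'(Y+Z)$ after integrating by parts), so no further comparison is needed.
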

\begin{proof}
Since 
$(I-\mathbb L)X=(I-\mathbb L)Y+(I-\mathbb L)Z$, where $(I-\mathbb L)Y$ satisfies the following random PDE 
\begin{align}\label{vspde}
&d(I-\mathbb L)Y+A^2(I-\mathbb L)Ydt+A(I-\mathbb L)(f(Y+Z))dt=0,\\\nonumber 
&(I-\mathbb L)Y(0)=(I-\mathbb L)X(0),
\end{align}
and $(I-\mathbb L)Z$ satisfies  
\begin{align*}
d(I-\mathbb L)Z+A^2(I-\mathbb L)Zdt=(I-\mathbb L)dW(t),\; (I-\mathbb L)Z(0)=0,
\end{align*}
it suffices to estimate the term $\|(I-\mathbb L)Y\|$ due to the a priori estimate \eqref{reg-con} of $Z$.
Before that, we first give the estimate of $\|(I-\mathbb L)Y\|_{\HH^{-1}}$.
For any sufficiently small number $\epsilon>0$, it follows from the chain rule, the dissipative  property of $-f$, the H\"older and Young inequalities  
that  
\begin{align*}
&\quad\|(I-\mathbb L)Y(t)\|_{\HH^{-1}}^2\\
&\le
\|(I-\mathbb L)X_0\|_{\HH^{-1}}^2
-(2-\epsilon)\int_0^t\|(I-\mathbb L)Y(s)\|_{\HH^1}^2ds\\
&\quad
+\int_0^t\|(I-\mathbb L)Y(s)\|_{\HH^{-1}}^2ds
-8(c_{4}-\epsilon)\int_{0}^t\|(I-\mathbb L)Y(s)\|_{L^{4}}^{4}ds\\
&\quad+C(\epsilon)\int_0^t\Big(1+\|\mathbb L Y(s)\|_{L^{4}}^{4}
+\|Z(s)\|_{L^{4}}^{4}\Big)ds.
\end{align*}
Then the Gronwall inequality leads that for $t\le T$,
\begin{align*}
\|(I-\mathbb L)Y(t)\|_{\HH^{-1}}^2
&\le
e^{T} \|(I-\mathbb L)X_0\|_{\HH^{-1}}^2
+C(\epsilon,T)\int_0^T\Big(1+\|\mathbb L Y(s)\|_{L^{4}}^{4}ds\\
&\qquad
+\|Z(s)\|_{L^{4}}^{4}\Big)ds,
\end{align*}
which implies that
\begin{align*}
&(2-\epsilon)\int_0^t\|(I-\mathbb L)Y(s)\|_{\HH^1}^2ds+8(c_4-\epsilon)\int_{0}^t\|(I-\mathbb L)Y(s)\|_{L^{4}}^{4}ds\\
&\le C(\epsilon, T)\Big(\|(I-\mathbb L)X_0\|_{\HH^{-1}}^2
+\int_0^T\left(1+\|\mathbb L Y(s)\|_{L^{4}}^{4}
+\|Z(s)\|_{L^{4}}^{4}\Big)ds\right).
\end{align*}
Due to the definition of $\mathbb L$ and 
\begin{align*}
d\mathbb LY+\mathbb LA^2Ydt
+\mathbb LAf(Y+Z)dt=0,\; \mathbb LY(0)=\mathbb L X_0,
\end{align*}
we have $\mathbb LY(t)=\mathbb L X_0$, which implies  $\|\mathbb LY(t)\|_{L^{4}}=\|\mathbb LX_0\|_{L^{4}}$. 
The a priori estimates of $Z$ and $\mathbb L Y$ yield the uniformly boundedness of  both the $p$th moment of $\int_0^t\|(I-\mathbb L)Y(s)\|_{\HH^1}^2ds$ and that of $\int_0^T\|(I-\mathbb L)Y(s)\|_{L^{4}}^{4}ds.$

Now we are in the position to give the a priori estimate of $\|(I-\mathbb L)Y\|$. By applying the chain rule and integration by parts,
we obtain 
\begin{align*}
&\quad\|(I-\mathbb L)Y(t)\|^2\\
&\le \|(I-\mathbb L)X_0\|^2
-(2-\epsilon)\int_0^t\|(-A)(I-\mathbb L)Y(s)\|^2ds\\
&\quad -(24c_{4}-\epsilon)\int_0^t\|Y(s)\nabla Y(s)\|^{2}ds +C(\epsilon)\int_0^t\| Z(s)\|_E^2\|Y(s)\|_{L^4}^4ds\\
&\qquad+
C(\epsilon)\int_0^t
(1+\|\nabla(I-\mathbb L)Y(s)\|^2+\|Z(s)\|_{L^6}^6)ds.
\end{align*}
By the equivalence of norms in $\HH^{1}$ and $H^1$ for the functions in $\HH^1$, we have 
\begin{align*}
&\|(I-\mathbb L)Y(t)\|^2+(2-\epsilon)\int_0^t\|(-A)(I-\mathbb L)Y(s)\|^2ds\\
&\le \|(I-\mathbb L)X_0\|^2
+C(\epsilon)\int_0^t
\big(1+\|(I-\mathbb L)Y(s)\|_{\HH^1}^2+\|Z(s)\|_{L^6}^6\\
&\qquad+\|Z(s)\|_E^2\|Y(s)\|_{L^4}^4\big)ds.
\end{align*}
Combining the uniform boundedness  of both the $p$th moment of $\int_0^T\|(I-\mathbb L)Y(s)\|_{\HH^1}^2ds$ and $\int_0^T\|(I-\mathbb L)Y(s)\|_{L^{4}}^{4}ds$ with the estimate $\E\Big[\sup\limits_{s\in[0,T]}\|Z(s)\|_{E}^{p}\Big]\le C(p,T)$ for $p\ge 1$, it follows that
\begin{align}\label{pri-u2}
\E\Big[\sup_{t\in [0,T]}\|(I-\mathbb L)Y(t)\|^{2p}+\Big(\int_0^T\|(-A)(I-\mathbb L)Y(s)\|^2ds\Big)^{p}\Big]
&\le C(X_0,T,p),
\end{align}
which, together with the a priori estimates of both $\|\mathbb LY\|$ and $\|Z\|$, completes the proof.
\end{proof}

Based on the a priori estimate of $\|X\|$, we give the following regularity estimate of $X$.
\begin{prop}\label{prop-spa}
Let $(I-\mathbb L) X_0\in \HH^{\gamma}$, $\gamma \in (0,\frac 32)$, $p\ge1$.
The unique mild solution  $X$  of equation \eqref{spde}  satisfies 
\begin{align}\label{reg}
\E\Big[\sup_{t\in [0,T]}\big\|(I-\mathbb L)X(t)\big\|_{\HH^{\gamma}}^p\Big]
&\le C
\end{align}
for a positive constant $ C:=C(X_0,T,p)$.
\end{prop}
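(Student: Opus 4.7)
Since $(I-\mathbb L)X(t)=(I-\mathbb L)Y(t)+(I-\mathbb L)Z(t)$ and the moment bound on $\|(I-\mathbb L)Z(t)\|_{\HH^\gamma}$ is immediate from \eqref{reg-con}, the task reduces to establishing the same $L^p(\Omega;L^\infty(0,T;\HH^\gamma))$-bound for the random PDE solution $(I-\mathbb L)Y$ of \eqref{vspde}. Note that $\mathbb L Y(t)=\mathbb L X_0$ is trivial, so only the zero-mean part matters.

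The plan is to exploit the parabolic smoothing of the $-A^2$ semigroup via the mild formulation
$$(I-\mathbb L)Y(t)=e^{-tA^2}(I-\mathbb L)X_0-\int_0^t A\,e^{-(t-s)A^2}(I-\mathbb L)f(Y(s)+Z(s))\,ds.$$
Applying the standard bound $\|A^{1+\gamma/2}e^{-rA^2}\|_{\LL(\HH)}\le Cr^{-(2+\gamma)/4}$ and using that $(2+\gamma)/4<1$ because $\gamma<3/2<2$, Young's inequality for convolutions in time gives
$$\sup_{t\in[0,T]}\|(I-\mathbb L)Y(t)\|_{\HH^\gamma}\le \|(I-\mathbb L)X_0\|_{\HH^\gamma}+C\bigl\|\,\|f(Y+Z)\|\,\bigr\|_{L^{r'}(0,T)},$$
for any conjugate exponent $r'>4/(2-\gamma)$. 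This reduces matters to controlling $\E\bigl(\int_0^T\|f(Y(s)+Z(s))\|^{r'}\,ds\bigr)^{p/r'}$.

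To handle the cubic nonlinearity I would use $\|f(u)\|\le C(1+\|u\|_{L^6}^3)$ together with one-dimensional Sobolev embeddings ($L^6\hookleftarrow H^{1/3}$, $L^\infty\hookleftarrow H^{1/2+\epsilon}$) and interpolation in $\HH^\alpha$, to express $\|Y(s)\|_{L^6}^3$ in terms of $\|Y(s)\|$ and $\|Y(s)\|_{\HH^2}$ (for example $\|Y\|_{L^6}^3\le C\|Y\|_{\HH^2}^{1/2}\|Y\|^{5/2}$). The $Z$-part is treated by \eqref{reg-con}. Plugging in the a priori bounds of Lemma \ref{l2}, in particular the $L^p(\Omega)$-boundedness of $\sup_{s\le T}\|Y(s)\|$ and of $\int_0^T\|Y(s)\|_{\HH^2}^2\,ds$ from \eqref{pri-u2}, and applying Fubini and Hölder, would close the estimate and, combined with the $\HH^\gamma$-bound on $Z$, yield the claim.

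The hard part is matching exponents as $\gamma\uparrow 3/2$: the Hölder conjugate $r'$ then approaches $8$, while the only time-integrated regularity of $Y$ available from \eqref{pri-u2} is an $L^2$-norm of $\|Y\|_{\HH^2}$. One must therefore pick the Sobolev interpolation carefully (shifting $\epsilon$ in $L^\infty\hookleftarrow H^{1/2+\epsilon}$ and rebalancing the $L^6$-interpolation against $\sup_t\|Y(t)\|$), or, if that proves insufficient at the endpoint, to shift a fractional power $\sigma>0$ onto $f$ in the mild estimate so that the kernel becomes $(t-s)^{-(2+\gamma-\sigma)/4}$ and $\|f(Y+Z)\|_{\HH^\sigma}$ is controlled via a Nemytskii-type estimate; alternatively a bootstrap argument starting from small $\gamma_0$ and iterating upward can be used to feed improved integrated regularity of $Y$ back into the nonlinear estimate.
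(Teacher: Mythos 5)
Your skeleton (reduce to $(I-\mathbb L)Y$, use the mild form, the smoothing $\|A^{1+\gamma/2}e^{-rA^2}\|_{\LL(\HH)}\le Cr^{-(2+\gamma)/4}$, Gagliardo--Nirenberg, and the a priori bounds of Lemma \ref{l2}) is the same as the paper's, but the way you close the estimate has a genuine gap, and it is larger than the ``endpoint matching'' issue you flag. After Young's convolution inequality you must control $\|f(Y+Z)\|$ in $L^{r'}(0,T)$ (in moments) with $r'>4/(2-\gamma)$. The only time-integrated regularity available from \eqref{pri-u2} is $\sup_t\|Y(t)\|$ plus $\int_0^T\|AY\|^2\,ds$ (and $\int_0^T\|Y\|_{L^4}^4ds$, $\int_0^T\|Y\|_{\HH^1}^2ds$), and by any interpolation of the type you propose ($\|Y\|_{L^6}^3\lesssim\|Y\|_{\HH^2}^{1/2}\|Y\|^{5/2}$, or via $H^{1/3}\hookrightarrow L^6$) these data yield $\|f(Y+Z)\|$ in $L^4_t$ at best. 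Hence your argument closes only for $\gamma<1$, not on the full claimed range $\gamma\in(0,\frac32)$; rebalancing the Sobolev exponents cannot repair this, since the obstruction is the scaling of $L^\infty_tL^2\cap L^2_t\HH^2$, and your alternative suggestions (shifting a fractional power onto $f$, bootstrapping from small $\gamma_0$) are left unexecuted.

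The missing step, which is exactly what the paper inserts, is an intermediate uniform-in-time bound $\E\big[\sup_{t\le T}\|(I-\mathbb L)Y(t)\|_{L^6}^p\big]\le C$. This is proved from the mild form measured in $L^6$: the relevant kernel there is only $(t-s)^{-7/12}$, and after Gagliardo--Nirenberg ($\|(I-\mathbb L)Y\|_{L^6}^3\lesssim\|A(I-\mathbb L)Y\|^{1/2}\|(I-\mathbb L)Y\|^{5/2}+\|(I-\mathbb L)Y\|$) a H\"older pairing of $(t-s)^{-7/12}$ (in $L^{4/3}_t$) against $\|AY\|^{1/2}$ (in $L^4_t$, from \eqref{pri-u2}) closes that estimate. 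Once $\sup_t\|Y(t)\|_{L^6}$ has all moments, $\|f(Y+Z)(s)\|\lesssim 1+\|Y(s)\|_{L^6}^3+\|Z(s)\|_{L^6}^3$ is bounded uniformly in $s$, and the $\HH^\gamma$ mild estimate with the integrable kernel $(t-s)^{-\frac12-\frac\gamma4}$ goes through for every $\gamma<\frac32$ with no exponent balancing at all. If you carry out your bootstrap suggestion concretely (first $\gamma_0\in(\frac13,1)$ to get $\sup_t\|Y\|_{\HH^{\gamma_0}}$, hence $\sup_t\|Y\|_{L^6}$ by Sobolev embedding, then rerun the mild estimate), you recover essentially this argument; as written, however, the proof does not establish the proposition for $\gamma\ge1$.
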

\begin{proof}
Due to \eqref{reg-con},
it suffices to give a regularity estimate for $(I-\mathbb L)Y$.
Before that, we give the following estimate of $\|(I-\mathbb L)Y(t)\|_{L^6}$.
The Sobolev embedding theorem $L^q\hookrightarrow H^{\frac 12-\frac 1q},  2\le q<\infty$, the contractivity property of $e^{-A^2t}$ \eqref{con-pro}   and the Gagliardo--Nirenberg inequality yield that 
\begin{align*}
&\quad\|(I-\mathbb L)Y(t)\|_{L^6}\\
&\le \|e^{-A^2t}(I-\mathbb L)X_0\|_{L^6}+\int_0^t \|e^{-A^2(t-s)}A(1-\mathbb L)f(Y(s)+Z(s))\|_{L^6}ds\\
&\le Ct^{\min(-\frac 1{12}+\frac \gamma 4,0)}\|X_0\|_{\mathbb H^{\gamma}}
+C\int_0^t(t-s)^{-\frac 7{12}}
\Big(1+\|(I-\mathbb L)Y(s)\|_{L^{6}}^{3}\\
&\qquad+\|Z(s)\|_{L^{6}}^{3}+\|\mathbb L Y(s)\|_{L^{6}}^{3}\Big)ds\\
&\le Ct^{\min(-\frac 1{12}+\frac \gamma 4,0)}\|X_0\|_{\mathbb H^{\gamma}}
+C\int_0^t(t-s)^{-\frac 7{12}}
\Big(1+\|Z(s)\|_{L^{6}}^{3}+\|\mathbb L Y(s)\|_{L^{6}}^{3}
\\
&\qquad+
\|(-A)(I-\mathbb L)Y(s)\|^{\frac 12}\|(I-\mathbb L)Y(s)\|^{\frac 52}+
\|(I-\mathbb L)Y(s)\|\Big)ds.
\end{align*}
From the H\"older and Young inequalities, the a priori estimates of both $Z$ 
and $\mathbb L Y$, and the estimate \eqref{pri-u2}, 
it follows that for any $p\ge 1$,
\begin{align}\label{l6}
&\quad\|(I-\mathbb L)Y(t)\|_{L^p(\Omega;L^6)}\nonumber\\
&\le   Ct^{\min(-\frac 1{12}+\frac \gamma 4,0)}(\|X_0\|_{L^p(\Omega;\mathbb H^{\gamma})}+1)
+C(p)\Big(\E\Big[\Big(\int_0^T\|(-A)(I-\mathbb L)Y(s)\|^2ds\Big)^{\frac p2}\Big]\Big)^{\frac 1p}
\nonumber\\
&\quad+C(\int_0^T(t-s)^{-\frac 7{9}}ds)^{\frac {3}2}\Big(\E\Big[\sup_{s\in[0,T]}\|(I-\mathbb L)Y(s)\|^{\frac {10p}3}\Big]\Big)^{\frac 1p}\nonumber \\
&\quad+C\int_0^T(t-s)^{-\frac 7{12}}ds
\Big(\E\Big[\sup_{s\in [0,T]}\|(I-\mathbb L)Y(s)\|^p\Big]\Big)^{\frac 1p}\nonumber\\ 
&\le C(T,X_0,p)(1+ t^{\min(-\frac 1{12}+\frac \gamma 4,0)}).
\end{align}
From the mild form of $(I-\mathbb L)Y(t)$ of equation \eqref{vspde} and the smoothing effect of $S(t)$ \eqref{smo-eff}, it follows that 
\begin{align*}
&\quad\|(I-\mathbb L)Y(t)\|_{\mathbb \HH^{\gamma}}\\
&\le \|e^{-A^2t}(I-\mathbb L)X_0\|_{\HH^{\gamma}}
+\int_0^t\big\|e^{-A^2(t-s)}A(I-\mathbb L)f(Y(s)+Z(s))\big\|_{\HH^{\gamma}}ds\\
&\le C\|X_0\|_{\HH^{\gamma}}
+C\int_0^t(t-s)^{-\frac 12}\big\|e^{-\frac 12A^2(t-s)}f(Y(s)+Z(s))\big\|_{\HH^{\gamma}}ds\\
&\le C\|X_0\|_{\HH^{\gamma}}
+C\int_0^t(t-s)^{-\frac 12-\frac {\gamma}4}\big(1+\|(I-\mathbb L)Y(s)\|_{L^6}^3+\|\mathbb LY(s)\|^3_{L^6}
\\
&\qquad+\|Z(s)\|_{L^6}^3\big)ds.
\end{align*}
It can be directly verified that $\int_0^t(t-s)^{-\frac 12-\frac\gamma 4}s^{\min(-\frac 14+\frac {3\gamma}4,0)}ds\le C(T).$
By taking $L^p(\Omega)$ norm and making use of the a prior estimates of $\|\mathbb LY(s)\|_{L^6}$, $\|(I-\mathbb L)Y(s)\|_{L^6}$ and $\|Z(s)\|_{E}$, we obtain \eqref{reg}.

\end{proof}

\begin{rk}\label{reg-y}
Let the  conditions of Proposition \ref{prop-spa} hold.
If $\gamma\ge \frac 13$, we have that 
$\E\Big[\sup\limits_{t\in[0,T]}\big\|(I-\mathbb L)Y(t)\big\|_{L^6}^p\Big] \le C(T,X_0,p)$ due to the Sobolev embedding theorem. 
If in addition assume that  $(I-\mathbb L)X_0 \in \HH^{\beta}$ for any $\beta<2$, we obtain the higher regularity estimate of $Y$, i.e.,
\begin{align*}
\E\Big[\sup_{t\in[0,T]}\big\|(I-\mathbb L)Y(t)\big\|_{\HH^{\beta}}^p\Big]
&\le C(T,X_0,p).
\end{align*}
\end{rk}
Similar to the proof of \eqref{prop-spa}, we have the following a priori estimate of the solution in $L^p(\Omega;E)$ norm.

\begin{prop}\label{pri-xE}
Let $(I-\mathbb L) X_0\in E$ and  $p\ge1$.
The unique mild solution  $X$  of equation \eqref{spde}  satisfies 
\begin{align}\label{pri-E}
\E\Big[\sup_{t\in [0,T]}\big\|(I-\mathbb L)X(t)\big\|_{E}^p\Big]
&\le C
\end{align}
for a positive constant $C:=C(X_0,T,p)$.
\end{prop}

\begin{prop}\label{prop-tm}
Let $(I-\mathbb L) X_0\in \HH^{\gamma}$, $\gamma \in (0,\frac 32)$, $p\ge1$.
The unique mild solution $X$ of equation \eqref{spde}  satisfies 
\begin{align}\label{reg-tm}
\E\Big[\big\|X(t)-X(s)\big\|^p\Big]
&\le C(t-s)^{\frac {\gamma p} 4}
\end{align}
for a positive constant $ C:=C(X_0,T,p)$ and $0\le s\le t\le T$.
\end{prop}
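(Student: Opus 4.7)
The plan is to split $X(t)-X(s)$ according to the orthogonal decomposition $X=\mathbb{L}X+(I-\mathbb{L})Y+(I-\mathbb{L})Z$ introduced in Section~2, and to estimate each contribution in $L^p(\Omega;H)$ separately. Since $A\mathbb{L}=\mathbb{L}A=0$, the equation for $\mathbb{L}X$ collapses to $\mathbb{L}X(t)=\mathbb{L}X_0+\mathbb{L}W(t)$, so $\mathbb{L}X(t)-\mathbb{L}X(s)=\mathbb{L}(W(t)-W(s))$ is a scaled one-dimensional Brownian increment whose $L^p(\Omega)$ norm is bounded by $C(t-s)^{1/2}\le CT^{1/2-\gamma/4}(t-s)^{\gamma/4}$ because $\gamma<2$. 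The estimate for $(I-\mathbb{L})Z(t)-(I-\mathbb{L})Z(s)$ is exactly \eqref{reg-con1}.

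The real work is to control $(I-\mathbb{L})Y(t)-(I-\mathbb{L})Y(s)$. Starting from the mild form of \eqref{vspde}, I would write the increment as the sum of three standard pieces,
\begin{align*}
&(e^{-A^2 t}-e^{-A^2 s})(I-\mathbb{L})X_0-\int_s^t e^{-A^2(t-r)}A(I-\mathbb{L})f(Y(r)+Z(r))\,dr\\
&\qquad-(e^{-A^2(t-s)}-I)\int_0^s e^{-A^2(s-r)}A(I-\mathbb{L})f(Y(r)+Z(r))\,dr.
\end{align*}

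Each piece is then handled by the standard analytic semigroup bounds on $\HH$, namely $\|(e^{-A^2u}-I)A^{-\gamma/2}\|_{\LL(\HH)}\le Cu^{\gamma/4}$ and $\|A^{\beta}e^{-A^2u}\|_{\LL(\HH)}\le Cu^{-\beta/2}$. The first term is bounded by $C(t-s)^{\gamma/4}\|(I-\mathbb{L})X_0\|_{\HH^\gamma}$. The second, via $\|Ae^{-A^2 u}\|\le Cu^{-1/2}$, is bounded in $L^p(\Omega;H)$ by $C(t-s)^{1/2}\sup_{r\le T}\|f(Y(r)+Z(r))\|_{L^p(\Omega;H)}$, which is again $\le C(t-s)^{\gamma/4}$ on $[0,T]$. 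For the third, I would split $(e^{-A^2(t-s)}-I)e^{-A^2(s-r)}A=(e^{-A^2(t-s)}-I)A^{-\gamma/2}\cdot A^{1+\gamma/2}e^{-A^2(s-r)}$, producing
\begin{align*}
C(t-s)^{\gamma/4}\int_0^s(s-r)^{-\frac12-\frac{\gamma}{4}}\|f(Y(r)+Z(r))\|\,dr,
\end{align*}
whose $r$-integral converges because $\gamma<2$.

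To close all three estimates I need the uniform bound $\sup_{r\in[0,T]}\|f(Y(r)+Z(r))\|_{L^p(\Omega;H)}<\infty$. Since $f$ grows cubically, $\|f(u)\|_{H}\lesssim 1+\|u\|_{L^6}^3$, so this follows by combining the constancy $\mathbb{L}Y(r)=\mathbb{L}X_0\in L^6$, the uniform sixth-moment bound on $(I-\mathbb{L})Y$ established in the proof of Proposition~\ref{prop-spa}, and the $L^6\hookleftarrow E$ bound on $Z$ in \eqref{reg-con}. I expect this propagation of the sixth-moment bound on $Y+Z$ through the cubic nonlinearity to be the only delicate point; once that is in place, the three semigroup estimates collapse immediately to the desired rate $(t-s)^{\gamma/4}$.
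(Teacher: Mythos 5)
Your proposal is correct and follows essentially the same route as the paper: the same splitting into the mean part, $(I-\mathbb L)Z$ (via \eqref{reg-con1}) and $(I-\mathbb L)Y$, the same three-term mild-form decomposition with the standard bounds $\|(e^{-A^2u}-I)A^{-\gamma/2}\|\le Cu^{\gamma/4}$ and $\|A^{1+\gamma/2}e^{-A^2u}\|\le Cu^{-1/2-\gamma/4}$, and the same closing step via the uniform $L^6$ moment bounds on $Y$ and $Z$ to control $\|f(Y+Z)\|$.
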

\begin{proof}
Due to the continuity \eqref{reg-con1} of $Z$  and the fact that $\mathbb L Y(t)=\mathbb L X_0$, it suffices to estimate $\|(I-\mathbb L)(Y(t)-Y(s))\|$. 
 Based on the mild form of $(I-\mathbb L)(Y(t)-Y(s))$ and  \eqref{smo-eff},  
we obtain that for some $\beta<2$,
\begin{align*}
&\|(I-\mathbb L)(Y(t)-Y(s))\|\\
&\le \left\|(I-\mathbb L)e^{-A^2s}(e^{-A^2(t-s)}-I)X_0\right\|
\\
&\quad+\int_0^s\Big\|(e^{-A^2(t-r)}-e^{-A^2(s-r)})A(I-\mathbb L)f(Y(r)+Z(r))\Big\|dr
\\
&\quad+\int_s^t\Big\|e^{-A^2(t-r)}Af(Y(r)+Z(r))\Big\|dr\\
&\le C\|(I-\mathbb L)X_0\|_{\HH^{\gamma}}(t-s)^{\frac \gamma4}
+\int_s^t (t-r)^{-\frac 12}\|f(Y(r)+Z(r))\|dr\\
&\quad+\int_0^s(s-r)^{-\frac 12-\frac \beta 4}\|A^{-\frac \beta 2}(e^{-A^2(t-s)}-I)\|\|f(Y(r)+Z(r))\|dr.
\end{align*}
The estimate 
\eqref{l6}, the H\"older inequality and the Minkowski inequality yield that 
for  $\gamma<\frac 13$ and a sufficiently small $\epsilon>0$,
\begin{align*}
&\Big\|\int_s^t (t-r)^{-\frac 12}\|f(Y(r)+Z(r))\|dr\Big\|_{L^p(\Omega)}\\
&\le C(\epsilon)\left(\int_s^t (t-r)^{-\frac 12(\frac {4-\epsilon}{3\gamma+3-\epsilon})}ds\right)^{\left(\frac {3\gamma+3-\epsilon}{4-\epsilon} \right)}\\
&= C(t-s)^{\frac {3\gamma+1-\frac {\epsilon}2}{4-\epsilon}},
\end{align*}
and for  $\gamma\geq\frac 13$, 
\begin{align*}
\Big\|\int_s^t (t-r)^{-\frac 12}\|f(Y(r)+Z(r))\|dr\Big\|_{L^p(\Omega)}
&\le C \int_s^t (t-r)^{-\frac 12}dr\leq C(t-s)^{\frac 12}.
\end{align*}
By using the Minkovskii inequality and \eqref{l6}, and taking $\beta<\min(2,3\gamma+1)$, we have that  
\begin{align*}
&\Big\|\int_0^s(s-r)^{-\frac 12-\frac \beta 4}\|A^{-\frac \beta 2}(e^{-A^2(t-s)}-I)\|\|f(Y(r)+Z(r))\|dr\Big\|_{L^p(\Omega)}\\
&\le C(t-s)^{\frac \beta 4}\int_0^s(s-r)^{-\frac 12-\frac \beta 4}r^{\min(-\frac 14+\frac {3\gamma} 4,0)}dr\\
&\le C(t-s)^\frac \beta 4.
\end{align*}
Combining all the above estimates, we complete the proof.
\end{proof}

\begin{rk}
Let the  conditions of Proposition \ref{prop-tm} hold. 
If in addition $(I-\mathbb L)X_0 \in \HH^{\beta}$,
for any $\beta<2$, we can obtain the higher temporal  regularity of $Y$, i.e.,
\begin{align*}
\E\Big[\|Y(t)-Y(s)\|^p\Big]
&\le C(t-s)^{\frac {\beta p} 4}
\end{align*}
for a positive constant $ C:=C(X_0,T,p).$
\end{rk}

\section{Strong convergence rate of full discretization}
\label{sec-str}

In the section, we consider the semi-discretization and full discretization of  \eqref{spde}.
In contrast to the Lipschitz continuous case, both the convergence and the convergence rate of the numerical approximation for stochastic partial differential equation with non-globally Lipschitz continuous nonlinearity, become more involved recently (see e.g., \cite{BJ16,BJ13,BCH18,CHP12,CHP16,CH17,CH18,CHL16b,CHLZ17,CHS18,FKLL18,HJS18,KL18}) and are far from being well-understood.
It motivates us to give the strong convergence rate of the proposed full discretization for \eqref{spde}.

%

Denote $P^N$  the spectral Galerkin projection into the linear space  spanned by the first $(N+1)$ eigenvectors  $\{e_0,e_1,\cdots,e_N\}$.
Then the spatial semi-discretization is 
\begin{align}\label{semi}
dX^N+A\left(AX^N+P^N(f(X^N))\right)dt=&P^NdW,\; t\in (0,T];\; \\
X^N(0)=&P^NX_0.\nonumber
\end{align}
Let $\delta t$ be the time stepsize such that $T=K\delta t$ for some $K\in \N^+$. By using the accelerated idea, we propose  
the full discrete numerical scheme 
\begin{align}\label{full0}
Y^N_{k+1}&=Y^N_k-A^2Y^N_{k+1}\delta t
-AP^Nf(Y^N_{k+1}+Z^N(t_{k+1}))\delta t,\\\nonumber
X^N_{k+1}&=Y^N_{k+1}+Z^N(t_{k+1}), \; k\le K-1
\end{align} 
with the initial data $Y^N_0=X^N(0), Z^N(0)=0$.
Then we have the mild form of $Y^N_k$,
\begin{align}\label{full}
Y^N_{k+1}=T_{\delta t} Y^N_{k}
-\delta tT_{\delta t}A
P^N(f(Y^N_{k+1}+Z^N(t_{k+1}))), \; k\le K-1,
\end{align}
where $T_{\delta t}=(I+A^2\delta t)^{-1}$.

To analyze the strong convergence rate of the proposed spectral Galerkin method, we introduce an interpolation approach, which is also used to study the strong convergence rate of the full discretization.

\subsection{Strong convergence rate 
of spectral Galerkin method}

The main idea of deducing the optimal convergence rates of numerical schemes lies on interpolation arguments.
Namely, we first need to obtain the optimal regularity of the exact and numerical solutions in an interpolation space $E_{\theta_1}, \theta_1\in \R$.
Then we need the optimal convergence rate of the proposed numerical scheme in another  interpolation space $E_{\theta_0}$, $\theta_0\in \R$, $\theta_0< \theta_1$. By the smoothing effect of the semigroup, we deduce the optimal convergence rate of the numerical scheme in any interpolation space between $E_{\theta_0}$ and $E_{\theta_1}$.

In our case, the interpolation space $E_{\theta_1}$
is the Sobolev space $\HH^{\gamma}$ for some $\gamma<\frac 32$ and $E_{\theta_0}$ is the space 
$\HH^{-1}$. Notice that $X^N=\mathbb LX^N+(I-\mathbb L)X^N$, $X^N=Y^N+Z^N$, 
where $Y^N$ and $Z^N$ satisfy
\begin{align*}
&dY^N+P^N(A^2Y^N+Af(Y^N+Z^N))dt=0,\; Y^N(0)=X^N(0),\\
&dZ^N+A^2Z^Ndt=P^NdW(t),\; Z^N(0)=0.
\end{align*}
It is obvious that the error 
$\|X^N-X\|$ can be split as 
\begin{align*}
\|X^N-X\|
&\le \|\mathbb L(Y^N-Y)\|
+\|(I-\mathbb L)(Y^N-Y)\|+\|Z^N-Z\|.
\end{align*}
The first term is $0$ due to the definition of $\mathbb L$ and $P^N$, and the last term is controlled directly by 
\begin{align*}
\E\Big[\|Z^N-Z\|^{p}\Big]\le C(T,p)\lambda_{N}^{-\frac {\gamma p }2}
\end{align*}
for $\gamma<\frac 32$.
 For the term $\|(I-\mathbb L)(Y^N-Y)\|$, it confronts at many troubles to directly estimate the strong convergence rate of numerical schemes for equation \eqref{spde} due to the nonlinear term $Af(X)$.
One of the main difficulties lies on the loss of the Gronwall inequality to deduce the convergence rate. 
Another difficulty is the lack of the regularity property of the exact solution due to the space-time white noise.
To overcome these difficulties, we use the interpolation approach to deal with the term 
$\|(I-\mathbb L)(Y^N-Y)\|$.
The first step is the following optimal regularity of $X^N$, which is obtained by the similar arguments in the proofs of Propositions \ref{prop-spa} and \ref{prop-tm}.

\begin{lm}\label{pri-yn}
Let $(I-\mathbb L) X_0\in \HH^{\gamma}$, $\gamma \in (0,\frac 32)$, $p\ge1$.
Then $Y^N$  satisfies 
\begin{align*}
\E\Big[\sup_{t\in [0,T]}\big\|(I-\mathbb L)Y^N(t)\big\|_{\HH^{\gamma}}^p\Big]
&\le C(X_0,T,p),
\end{align*}
and 
\begin{align*}
\E\Big[\big\|Y^N(t)-Y^N(s)\big\|^p\Big]
&\le C(X_0,T,p)(t-s)^{\frac {\gamma p} 4}
\end{align*}
for a positive constant $ C(X_0,T,p)$.
\end{lm}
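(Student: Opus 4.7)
The plan is to transcribe the proofs of Lemma \ref{l2} and Propositions \ref{prop-spa} and \ref{prop-tm} almost verbatim, with $Y,Z$ replaced by $Y^N,Z^N$. The key observations that make this work are that $P^N$ commutes with $A$, $A^2$, $e^{-A^2 t}$ and $\mathbb L$ (the latter because $e_0$ is among the spectral basis vectors retained for every $N\ge 0$), and that $P^N$ is a contraction on $H$, $L^q$, $E$ and on each $\HH^{\alpha}$. Consequently $\mathbb L Y^N(t)=\mathbb L P^N X_0$ is constant in time, $(I-\mathbb L)Y^N(t)$ remains in the range of $P^N\cap\HH$, and the a priori bounds \eqref{reg-con} and \eqref{reg-con1} transfer to $Z^N$ uniformly in $N$.

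First I would repeat the energy/chain-rule arguments of Lemma \ref{l2}. Testing the Galerkin random PDE
\begin{align*}
d(I-\mathbb L)Y^N+A^2(I-\mathbb L)Y^N\,dt+AP^N(I-\mathbb L)f(Y^N+Z^N)\,dt=0
\end{align*}
with $A^{-1}(I-\mathbb L)Y^N$, which already lies in the range of $P^N$ so that $P^N$ drops out of the inner product, produces the same Gronwall estimate for $\|(I-\mathbb L)Y^N\|_{\HH^{-1}}^2$ together with uniform-in-$N$ $L^p(\Omega)$ control of $\int_0^T\|(I-\mathbb L)Y^N\|_{\HH^1}^2 ds$ and $\int_0^T\|(I-\mathbb L)Y^N\|_{L^4}^4 ds$. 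Testing next with $(I-\mathbb L)Y^N$ and using the dissipativity of $-f$ as in Lemma \ref{l2} then yields a uniform $L^p(\Omega)$-bound on $\sup_t\|(I-\mathbb L)Y^N\|^2+\int_0^T\|A(I-\mathbb L)Y^N\|^2 ds$. Iterating the mild form
\begin{align*}
(I-\mathbb L)Y^N(t)=e^{-A^2 t}(I-\mathbb L)P^N X_0-\int_0^t e^{-A^2(t-s)}AP^N(I-\mathbb L)f(Y^N+Z^N)ds
\end{align*}
and using the semigroup smoothing bound $\|A^{\alpha}e^{-A^2 t}P^N v\|\le \|A^{\alpha}e^{-A^2 t}v\|$ together with Sobolev embedding and Gagliardo--Nirenberg produces first the $L^6$-bound and then the $\HH^{\gamma}$-bound, exactly as in Proposition \ref{prop-spa}; the integrable singularity $(t-s)^{-1/2-\gamma/4}$ is unchanged for $\gamma<3/2$.

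For the temporal Hölder estimate I would split $(I-\mathbb L)(Y^N(t)-Y^N(s))$ into the same three mild-form pieces as in Proposition \ref{prop-tm}, namely the initial-data piece $(e^{-A^2(t-s)}-I)e^{-A^2 s}(I-\mathbb L)P^NX_0$, the far-past drift increment, and the near-time drift piece on $[s,t]$, and bound each one by the same semigroup/Sobolev estimates, combined with the analogue of \eqref{reg-con1} for $Z^N$. The only obstacle is bookkeeping to keep all constants independent of $N$, but because $P^N$ is a self-adjoint contraction that commutes with the linear dynamics and leaves the relevant inner products invariant on the range of $(I-\mathbb L)Y^N$, every integration by parts, smoothing bound, and dissipativity manipulation from Section \ref{sec-pri} goes through without modification.
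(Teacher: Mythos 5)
Your proposal follows essentially the same route as the paper: the paper gives no separate proof of Lemma \ref{pri-yn}, stating only that it is obtained by the same arguments as Propositions \ref{prop-spa} and \ref{prop-tm}, and your transcription of those arguments with $Y,Z$ replaced by $Y^N,Z^N$ (using that $P^N$ commutes with $A$, $e^{-A^2t}$ and $\mathbb L$, that $\mathbb L Y^N$ is constant in time, and that $P^N$ is an orthogonal projection on each $\HH^{\alpha}$) is precisely that argument, carried out with the Galerkin truncation in place.

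One justification should be repaired, although it does not affect the architecture of the proof: the spectral projection $P^N$ is \emph{not} a contraction on $L^q$ for $q\neq 2$, and it is not even uniformly bounded in $N$ on $E=\mathcal C(\mathcal O)$ (for the cosine basis the Lebesgue constants grow logarithmically). Hence the uniform-in-$N$ bound $\E\big[\sup_{t\in[0,T]}\|(I-\mathbb L)Z^N(t)\|_E^p\big]\le C(p,T)$ cannot be obtained by simply applying $P^N$ to \eqref{reg-con}. Instead one should note that the proof of \eqref{reg-con}--\eqref{reg-con1} (the factorization method together with the Burkholder estimate \eqref{Burk}) applies verbatim to the truncated stochastic convolution $Z^N=P^NZ$, with constants independent of $N$, because the sum over modes is only reduced; the $\HH^{\gamma}$- and $H$-bounds for $Z^N$ do follow directly from orthogonality. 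With that replacement, the remaining steps you outline (the $\HH^{-1}$ and $H$ energy estimates exploiting the dissipativity of $-f$, the $L^6$ bootstrap via the mild form, the $\HH^{\gamma}$ bound with the integrable singularity $(t-s)^{-\frac12-\frac\gamma4}$ for $\gamma<\frac32$, and the three-term splitting of the temporal increment) all go through uniformly in $N$, exactly as in the paper's Propositions \ref{prop-spa} and \ref{prop-tm}.
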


\begin{cor}\label{pri-xn}
Let $(I-\mathbb L) X_0\in \HH^{\gamma}$, $\gamma \in (0,\frac 32)$, $p\ge1$.
The unique mild solution $X^N$ of equation \eqref{semi}  satisfies 
\begin{align*}
\E\Big[\sup_{t\in [0,T]}\big\|(I-\mathbb L)X^N(t)\big\|_{\HH^{\gamma}}^p\Big]
&\le C(X_0,T,p),
\end{align*}
and 
\begin{align*}
\E\Big[\big\|X^N(t)-X^N(s)\big\|^p\Big]
&\le C(X_0,T,p)(t-s)^{\frac {\gamma p} 4}
\end{align*}
for a positive constant $ C(X_0,T,p)$.
\end{cor}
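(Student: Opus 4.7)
The plan is to reduce Corollary 4.1 to Lemma 4.1 (pri-yn) by exploiting the splitting $X^N = Y^N + Z^N$ together with the identification of $Z^N$ as the spectral Galerkin projection of the stochastic convolution $Z$. Since the spatial Galerkin projection $P^N$ commutes with $A$, $A^2$, and the semigroup $e^{-A^2t}$, and since $Z^N$ satisfies a linear equation driven by $P^N dW$ with zero initial data, we have $Z^N(t) = P^N Z(t)$. Consequently $\|(I-\mathbb{L})Z^N(t)\|_{\HH^\gamma} \le \|(I-\mathbb{L})Z(t)\|_{\HH^\gamma}$ and $\|Z^N(t)-Z^N(s)\| \le \|Z(t)-Z(s)\|$, so the a priori bounds \eqref{reg-con} and \eqref{reg-con1} transfer verbatim to $Z^N$, uniformly in $N$.

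For the spatial estimate, I would write $(I-\mathbb{L})X^N = (I-\mathbb{L})Y^N + (I-\mathbb{L})Z^N$, apply the triangle inequality in $\HH^\gamma$, take the supremum over $t\in[0,T]$ inside, and then take $L^p(\Omega)$-norm. The first term is controlled by the $Y^N$-regularity bound in Lemma \ref{pri-yn}, and the second by \eqref{reg-con}; Minkowski's inequality combines them into $C(X_0,T,p)$. For the temporal estimate, the analogous split $X^N(t)-X^N(s) = (Y^N(t)-Y^N(s)) + (Z^N(t)-Z^N(s))$ together with $(a+b)^p \le 2^{p-1}(a^p+b^p)$ reduces matters to the $Y^N$-H\"older bound from Lemma \ref{pri-yn} and to \eqref{reg-con1} for $Z^N$, both of which yield the same temporal exponent $\frac{\gamma p}{4}$.

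I do not expect any substantive obstacle here: the corollary is essentially bookkeeping on top of Lemma \ref{pri-yn}, with the only point requiring care being the commutativity of $P^N$ with the relevant operators, which ensures that the $N$-independent constants from \eqref{reg-con}--\eqref{reg-con1} can be reused for $Z^N$. If one wished to avoid invoking $Z^N = P^N Z$ explicitly, an alternative route is to repeat the mild-formula argument used in the proofs of Propositions \ref{prop-spa} and \ref{prop-tm} with $e^{-A^2t}$ and $A$ replaced by their $P^N$-truncated counterparts; the bounds $\|P^N\|_{\LL(\HH^\alpha)} \le 1$ and $\|e^{-A^2t}P^N v\|_{\HH^\alpha} \le \|e^{-A^2t}v\|_{\HH^\alpha}$ yield the same estimates uniformly in $N$, so the proof literally mirrors the continuous case.
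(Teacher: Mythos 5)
Your argument is correct and is exactly the route the paper intends: the corollary is stated without proof precisely because it follows from the splitting $X^N=Y^N+Z^N$ with $Z^N=P^NZ$, the contractivity of $P^N$ transferring \eqref{reg-con}--\eqref{reg-con1} to $Z^N$ uniformly in $N$, and the bounds on $Y^N$ from Lemma \ref{pri-yn}. Your bookkeeping via the triangle and Minkowski inequalities matches this, so there is nothing to add.
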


\begin{lm}\label{nabla}
Let $(I-\mathbb L) X_0\in \HH^{\gamma}$, $\gamma \in (0,\frac 32)$, $p\ge1$.
Then $X^N$ satisfies that for any small $\epsilon>0$
\begin{align*}
\big\|\nabla (I-\mathbb L)X^N(t)\big\|_{L^p(\Omega;E)}
&\le C(X_0,T,p)(1+t^{\min(-\frac 38-\epsilon+\frac \gamma 4,\frac 12-2\epsilon+\frac {3\gamma} 4)}).
\end{align*}
\end{lm}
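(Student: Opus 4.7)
The plan is to decompose $X^N=Y^N+Z^N$ and estimate the gradients of $(I-\mathbb L)Y^N$ and $(I-\mathbb L)Z^N$ in $L^p(\Omega;E)$ separately. Two tools are used throughout: the one-dimensional Sobolev embedding $\|v\|_E\le C\|v\|_{\HH^{1/2+\epsilon'}}$ for $v\in\HH$ and any $\epsilon'>0$, and the smoothing of the bi-Laplacian semigroup $\|e^{-A^2 u}v\|_{\HH^{\alpha+\delta}}\le C u^{-\delta/4}\|v\|_{\HH^\alpha}$.

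For $(I-\mathbb L)Z^N$, I would apply the Burkholder inequality \eqref{Burk} in $E$ with $\phi(s)=\nabla e^{-A^2(t-s)}(I-\mathbb L)P^N$. Since in 1D the Neumann eigenfunctions satisfy $\|\nabla e_k\|_E^2\le C\lambda_k$ with $\lambda_k\sim k^2$, the pointwise bound $\sum_{k=1}^N(\nabla e^{-A^2(t-s)}e_k(x))^2\le C\sum_k\lambda_k e^{-2\lambda_k^2(t-s)}\le C(t-s)^{-3/4}$ integrates and square-roots to give $\|\nabla(I-\mathbb L)Z^N(t)\|_{L^p(\Omega;E)}\le C t^{1/8}$, uniformly bounded on $[0,T]$ and trivially dominated by $C t^{-\epsilon}$. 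For $(I-\mathbb L)Y^N$ I would work from the mild formula $Y^N(t)=e^{-A^2 t}P^N X_0-\int_0^t e^{-A^2(t-s)}AP^N f(X^N(s))\,ds$. Sobolev-plus-smoothing on the initial-datum part gives $\|\nabla e^{-A^2 t}(I-\mathbb L)X_0\|_E\le C\|e^{-A^2 t}(I-\mathbb L)X_0\|_{\HH^{3/2+\epsilon'}}\le C t^{-(3/2+\epsilon'-\gamma)/4}\|(I-\mathbb L)X_0\|_{\HH^\gamma}$, which is of the desired form $C t^{-\epsilon}$ once $\epsilon'$ is chosen small relative to $\gamma-3/2+4\epsilon$.

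For the nonlinear convolution the same reasoning yields $\bigl\|\nabla\!\int_0^t e^{-A^2(t-s)}AP^N(I-\mathbb L)f(X^N(s))\,ds\bigr\|_E\le C\int_0^t(t-s)^{-(7/2+\epsilon')/4}\|f(X^N(s))\|\,ds$; the kernel is integrable as long as $\epsilon'<1/2$, and the cubic bound $\|f(v)\|\le C(1+\|v\|_{L^6}^3)$ combined with the embedding $\HH^\gamma\hookrightarrow L^6$ and the a priori spatial regularity of $X^N$ from Corollary \ref{pri-xn} (or Corollary \ref{pri-xE} when $\gamma$ is small) makes the integrand uniformly $L^p(\Omega)$-bounded in $s$, producing a bounded contribution. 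The main obstacle is the initial-datum exponent $(3/2+\epsilon'-\gamma)/4$: it cannot be pushed below $(3/2-\gamma)/4$, so the $t^{-\epsilon}$ claim holds only for $\epsilon$ above that threshold, with the constant $C(X_0,T,p)$ blowing up as $\epsilon$ approaches it from above. This mild singularity at $t=0$ reflects the fact that space-time white noise forbids a uniform $E$-estimate of $e^{-A^2 t}(I-\mathbb L)X_0$.
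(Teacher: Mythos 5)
Your proposal is correct and follows essentially the same route as the paper: decompose $X^N=Y^N+Z^N$, treat the stochastic convolution separately, and bound $\nabla(I-\mathbb L)Y^N$ via the mild formula, the smoothing of $e^{-A^2t}$, the embedding $\HH^{\frac12+\epsilon'}\hookrightarrow E$ and the a priori bounds on $f(X^N)$; your kernel exponent $\frac78+\frac{\epsilon'}4$ and initial-datum exponent $\frac{3/2+\epsilon'-\gamma}{4}$ coincide with the paper's $(t-s)^{-\frac12-\frac38-\epsilon}$ and $t^{-\frac38-\epsilon+\frac\gamma4}$, and your explicit Burkholder/Gaussian computation for $\nabla Z^N$ simply fills in what the paper dismisses as ``the boundedness of $Z^N$''. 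The threshold you flag is not a gap of your argument relative to the paper: the paper's own proof also only delivers $t^{-\frac38+\frac\gamma4-\epsilon}$ (as restated in the remark following the lemma), so the literal ``$t^{-\epsilon}$ for every small $\epsilon$'' form of the statement likewise requires $\gamma$ close to $\frac32$ or smoother initial data there.
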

\begin{proof}

By using the factorization method, it is not difficult to obtain that 
$\big\|\nabla (I-\mathbb L)Z^N(t)\big\|_{L^p(\Omega;E)}\le C(X_0,T,p).$
It only suffices to estimate $\nabla (I-\mathbb L)Y^N(t)$.
From the property of $e^{-tA^2}$, it follows that  
for any small $\epsilon>0$, 
\begin{align*}
&\|\nabla (I-\mathbb L)Y^N(t)\|_E\\
&
\le 
\|\nabla e^{-A^2t}(I-\mathbb L)Y^N(0)\|_E
+
\int_{0}^t \|\nabla e^{-A^2(t-s)}A(I-\mathbb L)f(Y^N(s)+Z^N(s))\|_Eds\\
&\le 
Ct^{-\frac 38-\epsilon+\frac \gamma 4}\|(I-\mathbb L)Y^N(0)\|_{\HH^{\gamma}}\\
&\quad+C\int_0^t(t-s)^{-\frac 12-\frac 38-\epsilon}\|f(Y^N(s)+Z^N(s))\|ds.
\end{align*}
Based on a priori estimates of $Y^N$ and $Z^N$, we have
\begin{align*}
\|\nabla (I-\mathbb L)Y^N(t)\|_{L^p(\Omega;E)}&\le 
Ct^{-\frac 38-\epsilon+\frac \gamma 4}+C\int_0^t(t-s)^{-\frac 12-\frac 38-\epsilon}(1+s^{\min(-\frac 14+\frac{3\gamma} 4,0)})ds\\
&\leq 
Ct^{-\frac 38-\epsilon+\frac \gamma 4}(1+t^{\min(-\frac 18-\epsilon+\frac {3\gamma} 4,0)})\\
&\leq Ct^{\min(-\frac 38-\epsilon+\frac \gamma 4,-\frac 12-2\epsilon+\frac {3\gamma} 4)}.
\end{align*}

\end{proof}

\begin{rk}\label{rk-est-xn}
Let $(I-\mathbb L) X_0\in \HH^{\gamma}$, $\gamma \in (0,\frac 32)$, $p\ge 1$. 
Then $X^N$ satisfies that for any small $\epsilon>0$,
\begin{align}\label{rk-est-xn-e}
\big\| (I-\mathbb L)X^N(t)\big\|_{L^p(\Omega;E)}
&\le C(X_0,T,p)(1+t^{\min(-\frac 18-\epsilon+\frac \gamma 4,0)}).
\end{align}
If $(I-\mathbb L) X_0\in \HH^{\beta}$, $\beta>\frac 32$, we have 
\begin{align*}
\|\nabla (I-\mathbb L)Y^N(t)\|_{L^p(\Omega;C([0,T];E))}
&\le C(T,X_0,p)
\end{align*}
for $p\ge1$.
\end{rk}

The second step of the interpolation approach is 
proving the optimal strong convergence rate of the spectral Galerkin method in $\HH^{-1}$. To this end, we introduce the following useful results. 

\begin{lm}\label{sob}
Let $g: L^4\to H$ be the Nemyskii operator of a polynomial of second degree.  
Then it holds that for any $\beta\in (0,1)$ that 
\begin{align*}
\|(I-\mathbb L)g(x)y\|_{\HH^{-1}}\le C\big(1+\|x\|_E^2+\|(I-\mathbb L)x\|_{\HH^{\beta}}^2\big)\|y\|_{\HH^{-\beta}},
\end{align*}
where $ x\in E, (I-\mathbb L)x\in \HH^{\beta}$ and $y\in \HH.$
\end{lm}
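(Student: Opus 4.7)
The plan is to use duality twice to reduce the assertion to a product estimate for the fractional Sobolev norm $H^\beta$, which in the one-dimensional setting follows from the Gagliardo seminorm characterization together with the embedding $H^1\hookrightarrow E$.

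First, the duality pairing between $\HH^{-1}$ and $\HH^1$ gives
\begin{align*}
\|(I-\mathbb L)g(x)y\|_{\HH^{-1}}=\sup_{\phi\in\HH^1,\,\|\phi\|_{\HH^1}\le 1}\big|\langle (I-\mathbb L)g(x)y,\phi\rangle\big|.
\end{align*}
For any such $\phi$ one has $\mathbb L\phi=0$, so $\langle(I-\mathbb L)g(x)y,\phi\rangle=\langle y,g(x)\phi\rangle$. Since $y\in\HH$ has zero mean, $\langle y,g(x)\phi\rangle=\langle y,(I-\mathbb L)(g(x)\phi)\rangle$, and a second duality between $\HH^{-\beta}$ and $\HH^\beta$ yields
\begin{align*}
\big|\langle y,(I-\mathbb L)(g(x)\phi)\rangle\big|\le \|y\|_{\HH^{-\beta}}\,\|(I-\mathbb L)(g(x)\phi)\|_{\HH^\beta}.
\end{align*}
Thus the lemma reduces to showing the deterministic bound
\begin{align*}
\|(I-\mathbb L)(g(x)\phi)\|_{\HH^\beta}\le C\big(1+\|x\|_E^2+\|(I-\mathbb L)x\|_{\HH^\beta}^2\big)\|\phi\|_{\HH^1}.
\end{align*}

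Writing $g(x)\phi=a_2 x^2\phi+a_1 x\phi+a_0\phi$ and using that $(I-\mathbb L)$ only subtracts a constant (hence is bounded on $H^\beta$ for $\beta\in(0,1)$ and on $\HH^\beta$ via the norm equivalence $\|\cdot\|_{\HH^\beta}\simeq\|\cdot\|_{H^\beta}$ on $\HH$), it suffices to estimate $\|f\phi\|_{H^\beta}$ for $f\in\{x^2,x,1\}$. The required ingredient is the product estimate
\begin{align*}
\|fg\|_{H^\beta}\le C\big(\|f\|_E\|g\|_{H^\beta}+\|g\|_E\|f\|_{H^\beta}\big),\qquad \beta\in(0,1),
\end{align*}
which follows immediately from the Gagliardo characterization
$[fg]_{H^\beta}^2=\iint|fg(\xi)-fg(\eta)|^2|\xi-\eta|^{-1-2\beta}\,d\xi d\eta$ combined with the pointwise bound $|fg(\xi)-fg(\eta)|\le\|f\|_E|g(\xi)-g(\eta)|+\|g\|_E|f(\xi)-f(\eta)|$, and the trivial $\|fg\|_{L^2}\le\|f\|_E\|g\|_{L^2}$.

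Iterating this estimate once with $f=x$, $g=x\phi$ controls $\|x^2\phi\|_{H^\beta}$ by $C\|x\|_E(\|x\|_E\|\phi\|_{H^\beta}+\|x\|_{H^\beta}\|\phi\|_E+\|\phi\|_E\|x\|_{H^\beta})$, and then the $d=1$ Sobolev embedding $\HH^1\hookrightarrow E$ gives $\|\phi\|_E+\|\phi\|_{H^\beta}\le C\|\phi\|_{\HH^1}$, while $\|x\|_{H^\beta}\le|\mathbb L x|+\|(I-\mathbb L)x\|_{\HH^\beta}\le C(\|x\|_E+\|(I-\mathbb L)x\|_{\HH^\beta})$. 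Young's inequality then absorbs the cross term $\|x\|_E\|(I-\mathbb L)x\|_{\HH^\beta}$ into $\|x\|_E^2+\|(I-\mathbb L)x\|_{\HH^\beta}^2$, and the analogous but simpler estimates for $\|x\phi\|_{H^\beta}$ and $\|\phi\|_{H^\beta}$ are dominated by the same right-hand side, yielding the required inequality. The main (and only) technical hurdle is establishing the Gagliardo product estimate; beyond that, the proof is a bookkeeping exercise with norm equivalences, the projection $I-\mathbb L$, and Young's inequality.
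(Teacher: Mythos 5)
Your argument is correct and is essentially the paper's own proof: both reduce the $\HH^{-1}$ norm by duality against an $\HH^1$ test function, shift $A^{\pm\beta/2}$ to pair $y$ in $\HH^{-\beta}$ against $(I-\mathbb L)(g(x)\phi)$ in $\HH^{\beta}$, and establish the needed multiplier bound via the Gagliardo seminorm characterization of $H^{\beta}$ together with the one-dimensional embedding $\HH^1\hookrightarrow E$ and the $\HH^{\beta}$--$H^{\beta}$ norm equivalence. The only cosmetic difference is that you package the Gagliardo step as a general product estimate applied iteratively, whereas the paper estimates $\|(I-\mathbb L)g(x)z\|_{\HH^{\beta}}$ directly with $\|z\|_{\HH^\beta}+\|z\|_E$ on the right; the content is the same.
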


Its proof can be found in the Appendix.

\begin{cor}\label{rk-sob}
Under the conditions of Lemma \ref{sob}, for any $\beta\in [1,\min(\eta,2))$ and $\eta>\frac 32,$ we have 
\begin{align*}
\|(I-\mathbb L)g(x)y\|_{\HH^{-\eta}}\le C\big(1+\|x\|_E^2+\|\nabla x\|_E^2+\|(I-\mathbb L)x\|_{\HH^{\beta}}^2\big)\|y\|_{\HH^{-\beta}},
\end{align*}
where $ x, \nabla x\in E, (I-\mathbb L)x\in \HH^{\beta}$ and $y\in \HH.$ 
\end{cor}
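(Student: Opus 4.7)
The plan is to mimic the duality argument used in the proof of Lemma \ref{sob}, reducing the claim to a multiplier estimate in $\HH^{\beta}$. By the self-adjointness of the multiplication operator $g(x)$ and the identification $\HH^{-\eta}=(\HH^{\eta})^{*}$, I would write
\begin{align*}
\|(I-\mathbb L)g(x)y\|_{\HH^{-\eta}}
&=\sup_{\|w\|_{\HH}\le 1}\bigl|\langle A^{-\frac{\beta}{2}}y,\,A^{\frac{\beta}{2}}(I-\mathbb L)g(x)A^{-\frac{\eta}{2}}w\rangle\bigr|\\
&\le\|y\|_{\HH^{-\beta}}\sup_{\|w\|_{\HH}\le 1}\|(I-\mathbb L)g(x)z\|_{\HH^{\beta}},
\end{align*}
with $z:=A^{-\eta/2}w$. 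Since $\eta>\tfrac{3}{2}$ and $\beta\le\eta$, the Sobolev embedding $\HH^{\eta}\hookrightarrow E$ together with the spectral definition of fractional powers gives the uniform bounds $\|z\|_E,\|\nabla z\|,\|z\|_{\HH^{\beta}}\le C\|w\|_{\HH}$.

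The problem is thereby reduced to the deterministic multiplier estimate
\[
\|(I-\mathbb L)g(x)z\|_{\HH^{\beta}}\le C\bigl(1+\|x\|_E^{2}+\|\nabla x\|_E^{2}+\|(I-\mathbb L)x\|_{\HH^{\beta}}^{2}\bigr)
\]
for $z$ controlled in the norms produced above. Since $\beta\in[1,2)$, I would use the equivalence $\|u\|_{\HH^{\beta}}^{2}\sim\|u\|^{2}+\|\nabla u\|_{\HH^{\beta-1}}^{2}$ for zero-mean functions on $\mathcal O$, which transfers one derivative outside the product and leaves a fractional norm of exponent $\beta-1\in[0,1)$. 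The product rule yields
\[
\nabla\bigl(g(x)z\bigr)=g'(x)(\nabla x)\,z+g(x)\,\nabla z,
\]
and because $g$ is of polynomial degree two, $g'(x)$ is of degree one. The $\HH^{\beta-1}$-norms of the two summands are then estimated by the same one-dimensional Sobolev--Slobodeckij double-integral argument used in Lemma \ref{sob}, splitting differences of the form $g'(x(\xi_1))(\nabla x)(\xi_1)z(\xi_1)-g'(x(\xi_2))(\nabla x)(\xi_2)z(\xi_2)$ into increments in $x$, in $\nabla x$, and in $z$, and bounding pointwise values by $E$-norms and fractional differences by the appropriate $\HH^{\beta}$- or $\HH^{\beta-1}$-seminorm.

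The main obstacle I expect is the term carrying the fractional difference $(\nabla x)(\xi_1)-(\nabla x)(\xi_2)$: this must be absorbed by $\|\nabla x\|_{\HH^{\beta-1}}\le C\|(I-\mathbb L)x\|_{\HH^{\beta}}$, which is precisely why $\|(I-\mathbb L)x\|_{\HH^{\beta}}^{2}$ is needed on the right-hand side, while the pointwise values of $\nabla x$ appearing in the complementary splittings force the new norm $\|\nabla x\|_E$ that was absent from Lemma \ref{sob}. Assembling these Sobolev--Slobodeckij estimates, combining them with the uniform bounds on $\|z\|_E,\|\nabla z\|,\|z\|_{\HH^{\beta}}$ from the first step, and returning to the duality inequality then yields the claimed bound.
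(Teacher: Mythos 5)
Your proposal is correct and follows essentially the route the paper intends: the corollary is stated without proof as an analogue of Lemma \ref{sob}, and your argument --- duality at level $\eta$, reduction to a multiplier bound for $\|(I-\mathbb L)g(x)z\|_{\HH^{\beta}}$ with $z=A^{-\frac{\eta}{2}}w$ and $\beta\in[1,2)$, then product rule plus the Slobodeckij double-integral estimate, with $\|\nabla x\|_{E}$ absorbing the pointwise values (and Lipschitz increments) of $x$ and $\|(I-\mathbb L)x\|_{\HH^{\beta}}$ absorbing the fractional differences of $\nabla x$ --- is exactly the natural extension of the proof of that lemma. The only step worth making explicit is that for $\beta>\frac{3}{2}$ the identification of the spectral norm $\|\cdot\|_{\HH^{\beta}}$ with the Sobolev--Slobodeckij norm used in your double-integral argument requires the Neumann trace condition on $g(x)z$, which does hold here because $z\in\HH^{\eta}$ with $\eta>\frac{3}{2}$ and $(I-\mathbb L)x\in\HH^{\beta}$ both have vanishing normal derivative.
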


\begin{rk}\label{rk-sob1}
Similar to Corollary \ref{rk-sob}, we have  that for any $\beta\in [2,\min(\eta,3)]$ and $\eta>\frac 52$,
\begin{align*}
\|(I-\mathbb L)g(x)y\|_{\HH^{-\eta}}\le C\big(1+\|x\|_{W^{2,\infty}}^2+\|(I-\mathbb L)x\|_{\HH^{\beta}}^2\big)\|y\|_{\HH^{-\beta}},
\end{align*}
where $ x\in W^{2,\infty}, (I-\mathbb L)x\in \HH^{\beta}$ and $y\in \HH.$
We also have similar estimations in the cases  $d=2,3$,
where $\eta>\bs+\frac d2$, $\bs=1$ or $2$,
$\beta\in [\bs,\min(\eta,1+\bs))$.
\end{rk}

\begin{prop}\label{tm-strong}
Let $(I-\mathbb L) X_0\in \HH^{\gamma}$, $\gamma \in (0,\frac 32)$, $N\in \N^+$ and  $p\ge1$.
There exists a positive constant $ C:=C(X_0,T,p,\gamma)$ such that for any $\beta\in (0,1)$,
\begin{align}\label{strong}
\big\|Y^N(t)-Y(t)\big\|_{L^p(\Omega;\HH^{-1})}
&\le C\lambda_N^{-\frac \beta 2-\frac \gamma 2}.
\end{align}
\end{prop}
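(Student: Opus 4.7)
The plan is to combine a spectral decomposition of the error with an $\HH^{-1}$-energy estimate that exploits the gradient-flow structure of the Cahn--Hilliard operator. First I would split
\begin{equation*}
Y(t)-Y^{N}(t)=(I-P^N)Y(t)+e^{N}(t),\qquad e^{N}(t):=P^{N}Y(t)-Y^{N}(t),
\end{equation*}
so that $(I-P^N)Y(t)$ is the high-frequency truncation and $e^N(t)$ lies in the range of $P^N$. The truncation part is immediately controlled by the elementary inequality
\begin{equation*}
\|(I-P^{N})Y(t)\|_{\HH^{-1}}\le\lambda_{N}^{-\frac{1+\gamma}{2}}\|(I-\mathbb L)Y(t)\|_{\HH^{\gamma}},
\end{equation*}
which combined with Proposition \ref{prop-spa} already yields the claimed rate in $L^{p}(\Omega;\HH^{-1})$ for this piece. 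Everything then reduces to controlling $e^{N}$.

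Since $P^{N}$ commutes with $A$, subtracting the evolution equations for $P^{N}Y$ and $Y^{N}$ gives
\begin{equation*}
\partial_{t}e^{N}+A^{2}e^{N}+P^{N}A\bigl[f(Y+Z)-f(Y^{N}+Z^{N})\bigr]=0,\qquad e^{N}(0)=0,
\end{equation*}
and pairing with $A^{-1}e^{N}$ (using that $P^{N}e^{N}=e^{N}$ and $\mathbb L e^N=0$) produces the $\HH^{-1}$-energy identity
\begin{equation*}
\tfrac{1}{2}\tfrac{d}{dt}\|e^{N}\|_{\HH^{-1}}^{2}+\|e^{N}\|_{\HH^{1}}^{2}+\bigl\langle f(Y+Z)-f(Y^{N}+Z^{N}),e^{N}\bigr\rangle=0.
\end{equation*}
I would then split the nonlinear pairing as $f(Y+Z)-f(Y^{N}+Z^{N})=\Delta_{\mathrm{I}}+\Delta_{\mathrm{II}}$ with $\Delta_{\mathrm{I}}:=f(P^{N}Y+Z^{N})-f(Y^{N}+Z^{N})$ and $\Delta_{\mathrm{II}}:=f(Y+Z)-f(P^{N}Y+Z^{N})$. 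For $\Delta_{\mathrm{I}}$, the increment $P^{N}Y+Z^{N}-(Y^{N}+Z^{N})$ equals $e^{N}$, so the one-sided Lipschitz property of $f$ gives $\langle\Delta_{\mathrm{I}},e^{N}\rangle\ge -L_{f}\|e^{N}\|^{2}$, and the interpolation $\|e^{N}\|^{2}\le\|e^{N}\|_{\HH^{-1}}\|e^{N}\|_{\HH^{1}}$ together with Young's inequality transfers this into $C\|e^{N}\|_{\HH^{-1}}^{2}$ plus an absorbable multiple of $\|e^{N}\|_{\HH^{1}}^{2}$. For $\Delta_{\mathrm{II}}$, I write $\Delta_{\mathrm{II}}=\phi\cdot(I-P^{N})(Y+Z)$ with $\phi:=\int_{0}^{1}f'(\theta(Y+Z)+(1-\theta)(P^{N}Y+Z^{N}))d\theta$ a polynomial of degree two in two arguments; applying an extension of Lemma \ref{sob} and $\HH^{-1}\times\HH^{1}$ duality yields
\begin{equation*}
|\langle\Delta_{\mathrm{II}},e^{N}\rangle|\le\|\Delta_{\mathrm{II}}\|_{\HH^{-1}}\|e^{N}\|_{\HH^{1}}\le C\mathcal R(t)\lambda_{N}^{-\frac{1+\gamma}{2}}\|e^{N}\|_{\HH^{1}},
\end{equation*}
where $\mathcal R(t)$ depends polynomially on $\|Y+Z\|_{E}$, $\|(I-\mathbb L)(Y+Z)\|_{\HH^{\gamma}}$ and their $X^{N}$-counterparts, all with finite $p$-th moments by \eqref{reg-con}, Proposition \ref{prop-spa}, Corollary \ref{pri-xE}, Lemma \ref{pri-yn}, and Corollary \ref{pri-xn}; a further Young step absorbs $\|e^{N}\|_{\HH^{1}}^{2}$.

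Gathering these bounds gives a pathwise differential inequality of the form
\begin{equation*}
\tfrac{d}{dt}\|e^{N}\|_{\HH^{-1}}^{2}\le C\|e^{N}\|_{\HH^{-1}}^{2}+C\mathcal R(t)^{2}\lambda_{N}^{-(1+\gamma)},
\end{equation*}
and since $e^{N}(0)=0$, Gronwall's inequality followed by H\"older in $\Omega$ delivers $\|e^{N}(t)\|_{L^{p}(\Omega;\HH^{-1})}\le C(X_{0},T,p)\lambda_{N}^{-(1+\gamma)/2}$; adding the truncation bound yields \eqref{strong}. The main obstacle is the sharpness of the $\HH^{-1}$-bound on $\Delta_{\mathrm{II}}$, because Lemma \ref{sob} is stated only for $\beta\in(0,1)$ and its direct application produces the rate $\lambda_{N}^{-(\beta+\gamma)/2}$ with an $\varepsilon$-loss. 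To saturate $\beta=1$ I would exploit the orthogonality identity
\begin{equation*}
\langle\phi(I-P^{N})(Y+Z),e^{N}\rangle=\langle(I-P^{N})(Y+Z),(I-P^{N})(\phi e^{N})\rangle,
\end{equation*}
apply $\HH^{-1}\times\HH^{1}$ duality to the right hand side, and then control $\|(I-P^{N})(\phi e^{N})\|_{\HH^{1}}$ through the product rule and the 1D Sobolev embedding $H^{1/2+\varepsilon}\hookrightarrow L^{\infty}$, which allows pointwise bounds on $\phi$ purely in terms of $E$- and $\HH^{\gamma}$-norms of $Y+Z$ and $P^{N}Y+Z^{N}$ that are moment-bounded uniformly in $N$ by the a priori estimates above.
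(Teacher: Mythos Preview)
Your approach is essentially identical to the paper's: same splitting $Y-Y^{N}=(I-P^{N})Y-e^{N}$, same $\HH^{-1}$-energy identity for $e^{N}$, same use of the one-sided Lipschitz bound on the diagonal nonlinear increment, and same appeal to Lemma~\ref{sob} to control the residual piece carrying $(I-P^{N})(Y+Z)$ in a negative Sobolev norm. The only cosmetic difference is that the paper writes the mean-value factor with the endpoints $X=Y+Z$ and $X^{N}=Y^{N}+Z^{N}$, whereas you insert the intermediate point $P^{N}Y+Z^{N}$; the resulting estimates are the same.

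Where you go further is in flagging the $\varepsilon$-loss. The paper applies Lemma~\ref{sob} with $\beta\in(0,1)$, obtains the rate $\lambda_{N}^{-(\beta+\gamma)/2}$, and then simply writes ``letting $\beta$ close to~$1$''; strictly speaking this delivers $\lambda_{N}^{-\frac{1}{2}-\frac{\gamma}{2}+\varepsilon}$ for every $\varepsilon>0$, not the endpoint rate stated in~\eqref{strong}. Your orthogonality identity $\langle\phi(I-P^{N})(Y+Z),e^{N}\rangle=\langle(I-P^{N})(Y+Z),(I-P^{N})(\phi e^{N})\rangle$ is a legitimate route to try to close the gap; just note that carrying it out requires bounding $\|\phi e^{N}\|_{\HH^{1}}$, hence $\|\nabla\phi\|$, which forces you to use that $Z,\,Y,\,Y^{N}\in\HH^{\beta}$ for some $\beta>1$ (available here since the stochastic convolution lies in $\HH^{\beta}$ for all $\beta<\tfrac{3}{2}$ and Remark~\ref{reg-y} gives the analogue for $Y$) together with the 1D product/commutator estimates and an Agmon-type bound $\|e^{N}\|_{L^{\infty}}\le C\|e^{N}\|_{\HH^{-1}}^{1/4}\|e^{N}\|_{\HH^{1}}^{3/4}$ so that the resulting cross term can still be absorbed by Young's inequality. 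The paper does not attempt this refinement.
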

\begin{proof}
From the property of 
the Galerkin projection and the triangle inequality, it follows that  
\begin{align*}
\big\|Y^N-Y\big\|_{\HH^{-1}}
 &\le \big\|Y^N-P^NY\big\|_{\HH^{-1}}+\big\|(I-P^N)Y\big\|_{\HH^{-1}}\\
&\le \big\|Y^N-P^NY\big\|_{\HH^{-1}}+
C\lambda_N^{-\frac 12-\frac \gamma2}\|Y\|_{\HH^{\gamma}}.
\end{align*}
Thus it suffices to estimate the term $\big\|Y^N-P^NY\big\|_{\HH^{-1}}.$
By the chain rule, the integration by parts, 
the monotonicity of $-f$, the interpolation inequality for  Sobolev space (see e.g. \cite{Tar07}), and the Young inequality, we have 
\begin{align*}
&\|Y^N-P^NY\|^2_{\HH^{-1}}\\
&=-2\int_0^t\<\nabla(Y^N-P^NY),\nabla (Y^N-P^NY)\>ds\\
&\quad
-2\int_0^t\<f(Y^N+Z^N)-f(Y+Z),Y^N-P^NY\>ds\\
&\le -2\int_0^t\|\nabla (Y^N-P^NY)\|^2ds
-2\int_0^t\<\int_0^1f'(\theta Y^N+\theta Z^N+(1-\theta)Y+(1-\theta)Z)d \theta \\
&\qquad(Y^N-P^NY-(I-P^N)Y-(I-P^N)Z),Y^N-P^NY\>ds\\
&\le -2\int_0^t\|\nabla (Y^N-P^NY)\|^2ds
+2L_{f}\int_0^t\| Y^N-P^NY\|^2ds
\\
&\quad
+2\int_0^t\<A^{-\frac 12}(I-\mathbb L)\Big(\int_0^1f'(\theta (Y^N+ Z^N)+(1-\theta)(Y+Z))d \theta \\
&\qquad((I-P^N)Y+(I-P^N)Z)\Big), A^{\frac 12}(Y^N-P^NY)\>ds
\\
&\le -(2-\epsilon)\int_0^t\|\nabla (Y^N-P^NY)\|^2ds
+C(\epsilon) \int_0^t\|Y^N-P^NY\|_{\HH^{-1}}^2ds\\
&\quad+C(\epsilon)\int_0^t\Big\|(I-\mathbb L)\Big(\int_0^1f'(\theta (Y^N+ Z^N)+(1-\theta)(Y+Z))d \theta\\
&\qquad  
 ((I-P^N)Y+(I-P^N)Z)\Big)\Big\|_{\HH^{-1}}^2ds.
\end{align*}
It follows from Lemma \ref{sob} that for any $\beta\in (0,1)$,
\begin{align*}
&\|Y^N(t)-P^NY(t)\|^2_{\HH^{-1}}\\
&\le C\int_0^t\Big\|\int_0^1(I-\mathbb L)\Big(f'(\theta (Y^N+ Z^N)
+(1-\theta)(Y+Z)) \\
&\qquad((I-P^N)Y+(I-P^N)Z)d \theta 
\Big)\Big\|_{\HH^{-1}}^2ds\\
&\le 
C\int_0^t\Big(1+\|(I-\mathbb L)X^N\|_{\HH^{\beta}}^4+\|(I-\mathbb L)X\|_{\HH^{\beta}}^4\\
 &\qquad +\|X^N\|_E^4+\|X\|_E^4\Big)\|(I-P^N)Y+(I-P^N)Z\|_{\HH^{-\beta}}^2ds.
\end{align*}
Then from taking $L^{p}(\Omega)$ norm, the a priori estimates in Proposition \ref{prop-spa} and Corollary \ref{pri-xn},  \eqref{rk-est-xn-e}, and the H\"older inequality, it follows that for any $\beta\in (0,1)$,
\begin{align*}
&\|Y^N(t)-P^NY(t)\|_{L^{2p}(\Omega;\HH^{-1})}^2\\
&\le C(T)\Big(1+\int_0^T\big\|\|(I-\mathbb L)X^N\|_{\HH^{\beta}}^4+\|(I-\mathbb L)X\|_{\HH^{\beta}}^4+\|X^N\|_E^4\\
&\qquad +\|X\|_E^4\big\|_{L^{2p}(\Omega;\R)}ds\Big)
\sup_{s\in [0,T]}\|(I-P^N)Y(s)+(I-P^N)Z(s)\|_{L^{4p}(\Omega;\HH^{-\beta})}^2
\\
&\le C(T,X_0,p)\Big(\sup_{s\in [0,T]}\|A^{-\frac \beta 2}(I-P^N)Y(s)\|_{L^{4p}(\Omega;\HH)}^2\\
&\quad\quad+\sup_{s\in [0,T]}\|A^{-\frac \beta 2}(I-P^N)Z(s)\|_{L^{4p}(\Omega;\HH)}^2\Big)\\
&\le C(T,X_0,p)\lambda_N^{-\beta -\gamma }.
\end{align*}
Taking square root on both sides and using H\"older inequality, we complete the proof.
\end{proof}

Based on Lemma \ref{pri-yn} and Proposition \ref{tm-strong},  we deduce the optimal strong convergence rate of the proposed semi-discretization. 

\begin{tm}\label{tm-strong0}
Let $(I-\mathbb L) X_0\in \HH^{\gamma}$, $\gamma \in (0,\frac 32)$, $N\in \N^+$ and  $p\ge1$.
The numerical solution $X^N$ is strongly convergent to $X$  and  satisfies 
\begin{align}\label{strong0}
\big\|X^N(t)-X(t)\big\|_{L^p(\Omega;{H})}
&\le C\lambda_N^{-\frac \gamma 2}
\end{align}
for a positive constant $C=: C(X_0,T,p)$ and any sufficiently small $\epsilon>0$.
\end{tm}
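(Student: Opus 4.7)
The plan is to assemble the theorem from three ingredients established in this section: the $\HH^{\gamma}$-regularity of $(I-\mathbb L)Y$ and $(I-\mathbb L)Y^N$ (Proposition \ref{prop-spa} and Lemma \ref{pri-yn}), the sharp $\HH^{-1}$ error bound of Proposition \ref{tm-strong}, and a spectral interpolation inequality on $\HH$. I would begin by decomposing the error using $X=Y+Z$ together with the splitting induced by $\mathbb L$:
\begin{align*}
X^N(t)-X(t) = \mathbb L(Y^N(t)-Y(t)) + (I-\mathbb L)(Y^N(t)-Y(t)) + (Z^N(t)-Z(t)).
\end{align*}
Because $A\mathbb L=0$, both $\mathbb L Y$ and $\mathbb L Y^N$ are constant in time and equal to $\mathbb L X_0$ (using that $P^N$ preserves the $e_0$-mode), so the first summand vanishes identically. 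For the stochastic-convolution term, the spectral characterization gives $\|Z^N(t)-Z(t)\| = \|(I-P^N)Z(t)\| \le C\lambda_N^{-\gamma/2}\|Z(t)\|_{\HH^{\gamma}}$, and taking the $L^p(\Omega)$-norm and invoking \eqref{reg-con} yields the desired rate.

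The core of the argument is the middle term, which is where the interpolation approach enters. Observing that $H=\HH^0$ is the interpolation space $[\HH^{-1},\HH^{\gamma}]_{\theta}$ with $\theta=1/(1+\gamma)$, the spectral form of the $\HH^{\alpha}$ norms yields, for every $v\in\HH^{\gamma}$,
\begin{align*}
\|v\|_H \le \|v\|_{\HH^{-1}}^{\gamma/(1+\gamma)}\,\|v\|_{\HH^{\gamma}}^{1/(1+\gamma)}.
\end{align*}
Applying this with $v=(I-\mathbb L)(Y^N(t)-Y(t))$, taking $L^p(\Omega)$-norm of both sides and invoking H\"older's inequality with conjugate exponents $(1+\gamma)/\gamma$ and $1+\gamma$, I obtain
\begin{align*}
\|(I-\mathbb L)(Y^N(t)-Y(t))\|_{L^p(\Omega;H)}
&\le \|(I-\mathbb L)(Y^N(t)-Y(t))\|_{L^{p}(\Omega;\HH^{-1})}^{\gamma/(1+\gamma)} \\
&\quad \times \|(I-\mathbb L)(Y^N(t)-Y(t))\|_{L^{p}(\Omega;\HH^{\gamma})}^{1/(1+\gamma)}.
\end{align*}
Proposition \ref{tm-strong} bounds the first factor by $C\lambda_N^{-(1+\gamma)/2}$, which after raising to the power $\gamma/(1+\gamma)$ produces exactly $\lambda_N^{-\gamma/2}$, while Proposition \ref{prop-spa} and Lemma \ref{pri-yn} bound the second factor uniformly in $N$ and in $t\in[0,T]$. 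Combining all three contributions gives the claimed estimate.

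I do not anticipate a genuine obstacle at this stage: the theorem is essentially a distillation of the preceding lemmas and propositions through a single interpolation inequality. The only mild technicality is the bookkeeping of $L^p$-moments when combining H\"older with the interpolation bound, which is harmless because the a priori estimates in Proposition \ref{prop-spa} and Lemma \ref{pri-yn} hold for every $p\ge 1$. All the essential difficulties---handling the non-Lipschitz cubic nonlinearity and the roughness of space-time white noise---were already absorbed into the $\HH^{-1}$ error estimate of Proposition \ref{tm-strong}, where the monotonicity of $-f$ in $\HH^{-1}$ and Lemma \ref{sob} do the decisive work.
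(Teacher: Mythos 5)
Your proposal is correct and follows essentially the same route as the paper: split off the (vanishing) mean part and the stochastic convolution, bound $\|Z^N-Z\|$ spectrally, and treat $(I-\mathbb L)(Y^N-Y)$ via the interpolation inequality between $\HH^{-1}$ and $\HH^{\gamma}$ combined with H\"older in $\Omega$, Proposition \ref{tm-strong}, and the uniform $\HH^{\gamma}$ bounds (the paper cites Lemma \ref{pri-yn} and Remark \ref{reg-y} for the latter, which is equivalent to your use of Proposition \ref{prop-spa}). The exponent bookkeeping $\lambda_N^{-(1+\gamma)/2\cdot\gamma/(1+\gamma)}=\lambda_N^{-\gamma/2}$ matches the paper exactly.
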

\begin{proof}
From the triangle inequality, it follows that 
\begin{align*}
\big\|X^N(t)-X(t)\big\|_{L^p(\Omega;{H})}
&\le \big\|Y^N(t)-Y(t)\big\|_{L^p(\Omega;{H})}
+\big\|Z^N(t)-Z(t)\big\|_{L^p(\Omega;{H})}\\
&=\big\|Y^N(t)-Y(t)\big\|_{L^p(\Omega;\HH)}
+\big\|Z^N(t)-Z(t)\big\|_{L^p(\Omega;\HH)}\\
&\le \big\|Y^N(t)-Y(t)\big\|_{L^p(\Omega;\HH)}
+C(t,p)\lambda_N^{-\frac \gamma 2}.
\end{align*}
It suffices to estimate $\big\|Y^N(t)-Y(t)\big\|_{L^p(\Omega;\HH)}$.
From the mild form of $Y^N$ and $Y$, \eqref{smo-eff}, \eqref{rk-est-xn-e} and Lemma \ref{sob}, we have that for any $\beta\in (0,1)$ 
\begin{align*}
&\big\|Y^N(t)-Y(t)\big\|_{L^p(\Omega;\HH)}\\
&\le \|S(t)(I-P^N)Y_0\|_{L^p(\Omega;\HH)}+\int_0^t\|S(t-s)A(I-P^N)f(Y^N+Z^N)\|_{L^p(\Omega;\HH)}ds\\
&\quad+\int_0^t\|S(t-s)AA^{\frac 12}(f(Y^N+Z^N)-f(Y+Z))\|_{L^p(\Omega;\HH^{-1})}ds
\\
&\le C\lambda_N^{-\frac \gamma 2}
+C\int_0^t\|S(t-s)A^{\frac \gamma 2}AA^{-\frac \gamma 2}(I-P^N)f(Y^N+Z^N)\|_{L^p(\Omega;\HH)}ds\\
&\quad+C\int_0^t(t-s)^{-\frac 34}\big\|(1+\|X^N\|_E^2+\|X\|_E^2+\|X^N\|_{\HH^{\beta}}^2+\|X\|_{\HH^{\beta}}^2)\big\|_{L^{2p}(\Omega;\R)}\\
&\quad\times \left(\|Y^N-Y\|_{L^{2p}(\Omega;\HH^{-\beta})}+\|Z^N-Z\|_{L^{2p}(\Omega;\HH^{-\beta})}\right)ds.
\end{align*}
It can be verified that 
\begin{align*}
&\int_0^t\|S(t-s)A^{\frac \gamma 2}AA^{-\frac \gamma 2}(I-P^N)f(Y^N+Z^N)\|_{L^p(\Omega;\HH)}ds\\
&\leq \lambda_N^{-\frac \gamma 2}\int_0^t(t-s)^{-\frac 12-\frac \gamma 4}(1+s^{\min(-\frac 14+\frac {3\gamma}4,0)})ds\leq C(T)\lambda_N^{-\frac \gamma 2}.
\end{align*}
By further using the a priori estimate of $X$ and $X^N$, we have
\begin{align*}
&\int_0^t(t-s)^{-\frac 34}\big\|(1+\|X^N\|_E^2+\|X\|_E^2+\|X^N\|_{\HH^{\beta}}^2+\|X\|_{\HH^{\beta}}^2)\big\|_{L^{2p}(\Omega;\R)}\\
&\quad\times \left(\|Y^N-Y\|_{L^{2p}(\Omega;\HH^{-\beta})}+\|Z^N-Z\|_{L^{2p}(\Omega;\HH^{-\beta})}\right)ds\\
\leq& \int_0^t(t-s)^{-\frac 34}\big(1+\|X^N\|_{L^{4p}(\Omega;E)}^2+\|X\|_{L^{4p}(\Omega;E)}^2+\|X^N\|_{L^{4p}(\Omega;\mathbb H^\beta)}^2+\|X\|_{L^{4p}(\Omega;\mathbb H^\beta)}^2\big)\\
&\quad\times \left(\|Y^N-Y\|_{L^{2p}(\Omega;\HH^{-\beta})}+\|Z^N-Z\|_{L^{2p}(\Omega;\HH^{-\beta})}\right)ds\\
\leq& C\int_0^t(t-s)^{-\frac 34}\big(1+s^{\min(-\frac 14-\epsilon+\frac \gamma 2,0)})\left(\|Y^N-Y\|_{L^{2p}(\Omega;\HH^{-\beta})}+C\lambda_N^{-\frac \beta 2-\frac\gamma 2}\right)ds\\
\leq& C\lambda_N^{-\frac \beta 2-\frac\gamma 2}+\int_0^t(t-s)^{-\frac 34}\big(1+s^{\min(-\frac 14-\epsilon+\frac \gamma 2,0)})\|Y^N-Y\|_{L^{2p}(\Omega;\HH^{-\beta})}ds.
\end{align*}
Let $\theta=\frac {(1-\beta)}{\gamma+1}.$
The interpolation and H\"older inequalities lead to 
\begin{align*}
\|Y^N-Y\|_{L^{2p}(\Omega;\HH^{-\beta})}&\leq \|Y^N-Y\|_{L^{p}(\Omega;\HH^{-1})}^{1-\theta}\|Y^N-Y\|_{L^{p}(\Omega;\HH^{\gamma})}^\theta.
\end{align*}
This means that
\begin{align*}
&\big\|Y^N(t)-Y(t)\big\|_{L^p(\Omega;\HH)}\\
\leq& C\lambda_N^{-\frac \gamma 2}
+C
\int_0^t(t-s)^{-\frac 34}\big(1+s^{\min(-\frac 14-\epsilon+\frac \gamma 2,0)})\|Y^N-Y\|_{L^{p}(\Omega;\HH^{-1})}^{1-\theta}ds\\
\leq &C\lambda_N^{-\frac \gamma 2}+C(\lambda_N^{-\frac \beta 2-\frac\gamma 2})^{1-\theta}\le C\lambda_N^{-\frac \gamma 2},
\end{align*}
which shows the desired result.
\end{proof}

\subsection{Strong convergence rate of full discretization}
In this part, we extend the interpolation approach to the study of strong convergence rate of full discretization.  
For convenience, we denote $Z^N_k=Z^N(t_{k})$, $k\le K$.
We recall the equivalent form of \eqref{full0}, i.e., $X^N_k=Y^N_k+Z^N_k, k\le K$, where 
\begin{align*}
Y^N_{k+1}&=Y^N_{k}-A^2Y^N_{k+1}\delta t
-AP^Nf(Y^N_{k+1}+Z^N_{k+1})\delta t,\\ 
dZ^N&=-A^2Z^Ndt
+P^NdW(t)
\end{align*}
with $k\le K-1$, $Y^N_0=X^N(0)$ and $Z^N_0=0$. 
To make sure that the implicit method is solvable, we take $\delta t<\min(1,\frac 1{({L_{f}}-\lambda_1)\lor 0})$. 
Since for $t_k=k\delta t$, 
\begin{align*}
\|X^N_k-X(t_k)\|&\le \|X^N_k-X^N(t_k)\|+\|X^N(t_k)-X(t_k)\|\\
&\le  \|Y^N_k-Y^N(t_k)\|+\|X^N(t_k)-X(t_k)\|,
\end{align*}
it suffices to estimate the first term $\|Y^N_k-Y^N(t_k)\|$.
To apply the  interpolation approach, we need the optimal regularity estimate of $Y^N_k$ and the sharp strong convergence analysis of $\|Y^N_k-Y^N(t_k)\|_{\HH^{-1}}.$

\begin{lm}\label{pri-ynk}
Let $(I-\mathbb L) X_0\in \HH^{\gamma}$, $\gamma \in (0,\frac 32)$, $p\ge1$.
Then $Y^N_k$  satisfies 
\begin{align*}
\E\Big[\sup_{k\le K}\big\|(I-\mathbb L)Y^N_k\big\|_{\HH^{\gamma}}^p\Big]
&\le C(X_0,T,p),
\end{align*}
and 
\begin{align*}
\E\Big[\big\|Y^N_k-Y^N_{k_1}\big\|^p\Big]
&\le C(X_0,T,p)|(k-k_1)\delta t|^{\frac {\gamma p} 4}
\end{align*}
for a positive constant $ C(X_0,T,p)$ and $k_1,k\le K$.
\end{lm}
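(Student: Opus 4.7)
The plan is to mimic the strategy of Propositions \ref{prop-spa} and \ref{prop-tm} at the discrete level, relying on the mild form \eqref{full} and on the discrete smoothing estimates $\|A^{\alpha} T_{\delta t}^k\|_{\LL(\HH)} \le C t_k^{-\alpha}$ and $\|A^\alpha(T_{\delta t}^k - T_{\delta t}^{k_1})\|_{\LL(\HH)} \le C t_{k_1}^{-\alpha-\beta}((k-k_1)\delta t)^\beta$ that the rational approximation $T_{\delta t}=(I+A^2\delta t)^{-1}$ satisfies uniformly in $\delta t$. As in the continuous case I first split $Y^N_k = \mathbb{L}Y^N_k + (I-\mathbb{L})Y^N_k$; since $A\mathbb{L}=0$ and $\mathbb{L}P^N=\mathbb{L}$, applying $\mathbb{L}$ to the scheme gives $\mathbb{L}Y^N_k = \mathbb{L}X_0$, so all work reduces to controlling $(I-\mathbb{L})Y^N_k$.

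The first step is the discrete analogue of the $\HH^{-1}$ bound driving the proof of \eqref{pri-u2}. Testing \eqref{full0} against $A^{-1}(I-\mathbb{L})Y^N_{k+1}$ and using the identity $2\langle a_{k+1}-a_k,a_{k+1}\rangle = \|a_{k+1}\|^2 - \|a_k\|^2 + \|a_{k+1}-a_k\|^2$ yields a discrete energy inequality; employing the one-sided Lipschitz estimate for $f$ exactly as in the proof of Lemma \ref{l2} (splitting $f(Y^N_{k+1}+Z^N_{k+1})$ and absorbing the $L^4$ part via $c_4>0$) together with the a priori bound \eqref{reg-con} for $Z^N$ and a discrete Gronwall argument gives uniform $\sup_k\|(I-\mathbb{L})Y^N_k\|_{\HH^{-1}}^{2p}$ plus the summability of $\|(I-\mathbb{L})Y^N_k\|_{\HH^1}^2\delta t$ and $\|(I-\mathbb{L})Y^N_k\|_{L^4}^4\delta t$. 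A second test against $(I-\mathbb{L})Y^N_{k+1}$, with integration by parts transferring $A$ onto $\nabla(I-\mathbb{L})Y^N_{k+1}$, then produces uniform bounds on $\sup_k\|(I-\mathbb{L})Y^N_k\|^{2p}$ and on $\sum_k\|A(I-\mathbb{L})Y^N_k\|^2\delta t$, paralleling \eqref{pri-u2}.

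The second step upgrades these to $\HH^\gamma$ regularity. Using the mild form \eqref{full} I bound $\|(I-\mathbb{L})Y^N_k\|_{L^6}$ by the Sobolev-embedding / Gagliardo–Nirenberg chain already used in the proof of Proposition \ref{prop-spa}, with the continuous convolution replaced by a Riemann sum weighted by $t_{k-j}^{-7/12}$ (summable uniformly in $\delta t$), obtaining a uniform $L^p(\Omega)$ bound on $\sup_k\|(I-\mathbb{L})Y^N_k\|_{L^6}$. Feeding this into the mild form and applying $\|A^{1+\gamma/2} T_{\delta t}^{k-j}\|\lesssim t_{k-j}^{-1/2-\gamma/4}$ delivers the $\HH^\gamma$ bound, since $\gamma/2+1/2<5/4$ keeps the weight summable. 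This establishes the first claimed inequality.

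For the Hölder-in-time estimate I write
\begin{align*}
Y^N_k - Y^N_{k_1} = (T_{\delta t}^{k-k_1}-I)T_{\delta t}^{k_1}Y^N_0
- \delta t\!\!\sum_{j=0}^{k_1-1}\!(T_{\delta t}^{k-j}\!-\!T_{\delta t}^{k_1-j})AP^N f(X^N_{j+1})
- \delta t\!\!\sum_{j=k_1}^{k-1}\!T_{\delta t}^{k-j}AP^N f(X^N_{j+1}),
\end{align*}
bound the first term using $\|(T_{\delta t}^{k-k_1}-I)A^{-\gamma/2}\|\lesssim ((k-k_1)\delta t)^{\gamma/4}$ and the initial regularity, and bound the last two terms by the same mechanism as in the proof of Proposition \ref{prop-tm} with $\beta<2$, using the $L^6$ bound on $X^N_{j+1}$ just established; the resulting weights $(s-r)^{-1/2-\beta/4}$ are replaced by their discrete sums which remain uniformly bounded. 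Taking $L^p(\Omega)$ norms and adding the Hölder estimate \eqref{reg-con1} for $Z^N$ yields the second inequality. The main subtlety I expect lies in the first step: the discrete $\HH^{-1}$ testing produces an extra nonnegative term $\|(I-\mathbb{L})(Y^N_{k+1}-Y^N_k)\|_{\HH^{-1}}^2/(2\delta t)$ that must be discarded, and the implicit evaluation of $f$ at $Y^N_{k+1}$ means that the dissipation $-8 c_4\|(I-\mathbb{L})Y^N_{k+1}\|_{L^4}^4$ has to dominate the cross terms via Young's inequality for each choice of $\epsilon$, which requires the smallness condition $\delta t<\min(1,1/((L_f-\lambda_1)\vee 0))$ stated just before the lemma.
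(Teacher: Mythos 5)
Your proposal follows essentially the same route as the paper: discrete energy estimates obtained by testing the scheme against $(I-\mathbb L)Y^N_{k+1}$ in $\HH^{-1}$ and then in $\HH$ (giving the uniform $\HH^{-1}$, $L^2$ bounds and the summability of the $\HH^1$, $L^4$ and $\|A\cdot\|^2$ terms), followed by an $L^6$ bound and then the $\HH^\gamma$ bound via the discrete mild form with the smoothing of $T_{\delta t}^k$, and finally the same three-term splitting $(T_{\delta t}^{k-k_1}-I)T_{\delta t}^{k_1}Y^N_0$ plus the two Riemann sums for the temporal H\"older estimate. The argument is correct (your side remark that summability of the weight needs the exponent $\tfrac12+\tfrac\gamma4<1$ is the right condition, though you misstate it as ``$\gamma/2+1/2<5/4$''), and it matches the paper's proof in all essential steps.
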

\begin{proof}
The proof is similar to that of Lemma \ref{pri-yn}. 
Since 
\begin{align}\label{ily}
(I-\mathbb L)Y^N_{k+1}=&(I-\mathbb L)Y^N_{k}-A^2\delta t(I-\mathbb L)Y^N_{k+1}\nonumber\\
&-A\delta t(I-\mathbb L)P^N(f(Y^N_{k+1}+Z^N_{k+1})), 
\end{align}
taking inner product with $(I-\mathbb L)Y^N_{k+1}$ in $\HH^{-1}$ on both sides 
leads to
\begin{align*}
\|(I-\mathbb L)Y^N_{k+1}\|_{\HH^{-1}}^2
&\le \|(I-\mathbb L)Y^N_{k}\|_{\HH^{-1}}^2
-2\|\nabla (I-\mathbb L)Y^N_{k+1}\|^2\delta t\\
&\quad-2\<(I-\mathbb L)f(Y^N_{k+1}+Z^N_{k+1}),(I-\mathbb L)Y^N_{k+1}\>\delta t.
\end{align*}
From the monotonicity of $-f$, the equivalence of norms in $\HH^1$ and $H^1$ for functions in $\HH^1$, and the Young inequality, it follows that for some small $\epsilon>0$,
\begin{align*}
&\|(I-\mathbb L)Y^N_{k+1}\|_{\HH^{-1}}^2
+8(c_4-\epsilon)\|(I-\mathbb L)Y^N_{k+1}\|_{L^4}^4\delta t+
(2-\epsilon)\|(I-\mathbb L)Y^N_{k+1}\|_{\HH^{1}}^2\delta t\\
&\le 
\|(I-\mathbb L)Y^N_{k}\|_{\HH^{-1}}^2
+C(\epsilon)\Big(1+\|\mathbb L Y_{k+1}^N\|_{L^{4}}^{4}
+\|Z_{k+1}^N\|_{L^{4}}^{4}\Big)\delta t.
\end{align*}
By taking the $p$th moment, the priori estimate of $Z_k^N$ and the fact that $\mathbb L Y^N_k=\mathbb L X^N(0)$, we have 
\begin{align*}
&\E\Big[\sup_{k\le K}\|(I-\mathbb L)Y^N_{k}\|_{\HH^{-1}}^{2p}\Big]
+\E\Big[\Big(\sum_{k=0}^{K-1}8(c_4-\epsilon)\|(I-\mathbb L)Y^N_{k+1}\|_{L^4}^4\delta t\Big)^p\Big]\\
&+\E\Big[\Big(\sum_{k=0}^{K-1}(2-\epsilon)\|(I-\mathbb L)Y^N_{k+1}\|_{\HH^{1}}^2\delta t\Big)^p\Big]\\
&\le C(\epsilon,p,T)\sum_{k=0}^{K-1}\E\Big[\Big(1+\|\mathbb L Y_{k+1}^N\|_{L^{4}}^{4}
+\|Z_{k+1}^N\|_{L^{4}}^{4}\Big)^p\Big]\delta t\\
&\le C(\epsilon,p,T,X_0).
\end{align*}
Next we show the boundedness of $(I-\mathbb L)Y_k^N$ in $\HH$.
By taking inner product on both sides of equation \eqref{ily} with $(I-\mathbb L)Y^N_{k+1}$ in $\HH$,
we obtain 
\begin{align*}
\|(I-\mathbb L)Y^N_{k+1}\|^2
&\le \|(I-\mathbb L)Y^N_{k}\|^2
-(2-\epsilon)\delta t\|A(I-\mathbb L)Y^N_{k+1}\|^2\\
&\quad -(24c_{4}-\epsilon)\|Y^N_{k+1}\|^{2}\|\nabla(I-\mathbb L)Y^N_{k+1}\|^2\delta t
\\
&\quad +C(\epsilon)
(\|\nabla(I-\mathbb L)Y^N_{k+1}\|^2+\|Y^N_{k+1}\|_{L^{4}}^{4}+\|Z^N_{k+1}\|_{L^4}^4)\\
&\qquad+\|\nabla Z^N_{k+1}\|_{L^4}^{4}+\|\nabla Z^N_{k+1}\|^2)\delta t.
\end{align*}
Thus it is concluded that for $p\ge 1$,
\begin{align*}
&\E\Big[\sup_{k\le K}\|(I-\mathbb L)Y^N_{k+1}\|^{2p}\Big]+(2-\epsilon)\E\Big[\Big(\sum_{k=0}^{K-1}\|A(I-\mathbb L)Y^N_{k+1}\|^2\delta t\Big)^{p}\Big]\\
&\le C(\epsilon, T)\E\Big[\Big(\sum_{k=0}^{K-1}\|\nabla(I-\mathbb L)Y^N_{k+1}\|^2\delta t\Big)^{p}\Big]+C(\epsilon, T)\E\Big[\Big(\sum_{k=0}^{K-1}\|Y^N_{k+1}\|_{L^{4}}^{4}\delta t\Big)^{p}\Big]
\\
&\quad+C(\epsilon, T)\sum_{k=0}^{K-1}\E\Big[\|Z^N_{k+1}\|_{L^4}^{4p}+\| \nabla Z^N_{k+1}\|_{L^4}^{4p}+\|\nabla Z^N_{k+1}\|^{2p}\Big]\delta t\le C(X_0,p,T).
\end{align*}

Similar to the proof of Proposition \ref{prop-spa}, we need the boundedness of $\|(I-\mathbb L)Y_k^N\|_{L^6}$.
From the Sobolev embedding theorem, the smoothing effect of $T_{\delta t}$, the Gagliardo--Nirenberg and Young inequalities, it follows  that 
\begin{align*}
&\|(I-\mathbb L)Y^N_{k+1}\|_{L^6}\\
&\le \left\|T_{\delta t}^{k+1}(I-\mathbb L)Y_0^N\right\|_{L^6}
+ \left\|\sum_{j=0}^{k}T_{\delta t}^{k+1-j}A(I-\mathbb L)f(Y_{j+1}^N+Z_{j+1}^N)\right\|_{L^6}\delta t\\
&\le C((k+1)\delta t)^{\min(-\frac 1{12}+\frac \gamma 4,0)}\|Y_0^N\|_{\mathbb H^\gamma}+
C\sum_{j=0}^k(k+1-j)^{-\frac 7{12}}\delta t^{-\frac 7{12}}
\Big(\|Z_{j+1}^N\|_{L^{6}}^{3}\\
&\qquad+\|\mathbb L Y^N_{j+1}\|_{L^{6}}^{3}
+\|(I-\mathbb L)Y^N_{j+1}\|\Big)\delta t\\
&\quad+C\sum_{j=0}^k(k+1-j)^{-\frac 79}\delta t^{-\frac 79} \sup_{j\le K}\|(I-\mathbb L)Y^{N}_j\|^{\frac {10}3}\delta t+\sum_{j=0}^k\|(-A)(I-\mathbb L)Y^N_{j+1}\|^2\delta t.
\end{align*}
The a priori estimates of $Y^N_k$
 and $Z^N_k$ yield that for $p\ge 1$
\begin{align*}
\|(I-\mathbb L)Y^N_{k}\|_{L^p(\Omega;L^6)}
&\le C(T,X_0,p)(1+((k+1)\delta t)^{\min(-\frac 1{12}+\frac \gamma 4,0)}).
\end{align*}
Now, we are in the position to give the desired regularity estimate. 
From the mild form of $Y^N_k$, the Minkowski inequality and the above a priori estimates, we have 
\begin{align*}
&\big\|\sup_{k\le K}\|(I-\mathbb L)Y^N_k\|_{\HH^{\gamma}}\big\|_{L^p(\Omega)}\\
&\le 
C\|(I-\mathbb L)Y^N_0\|_{L^p(\Omega;\HH^{\gamma})}
+C\delta t \sup_{k\le K}\sum_{j=0}^{k-1} \left\|T_{\delta t}^{k-j}A(I-\mathbb L)f(Y^N_{j+1}+Z^N_{j+1})\right\|_{L^p(\Omega;\HH^{\gamma})}\\
&\le C(p)\|(I-\mathbb L)X^N(0)\|_{\HH^{\gamma}}^p\\
&\quad+
C(p)\sup_{k\le K}\sum_{j=0}^{k-1}(t_k-t_j)^{-\frac 12-\frac \gamma 4}\Big(1+\|Y^N_{j+1}\|_{L^{3p}(\Omega;L^6)}^3+\|Z^N_{j+1}\|_{L^{3p}(\Omega;L^6)}^3\Big)\delta t\\
&\le C(p)\|(I-\mathbb L)X^N(0)\|_{\HH^{\gamma}}^p+
C(T,p)\sup_{k\le K}\sum_{j=0}^{k-1}(t_k-t_j)^{-\frac 12-\frac \gamma 4}t_{j+1}^{\min(-\frac 14+\frac {3}4\gamma,0)}\delta t\\
&\le C(T,p,X_0).
\end{align*}
For convenience, we assume that  $k> k_1$.
Similar arguments in the proof of Proposition 
\ref{prop-tm}  yield that 
\begin{align*}
\E\Big[\big\|Y^N_k-Y^N_{k_1}\big\|^p\Big]
\le& \E\Big[\|(I-\mathbb L)T_{\delta t}^{k_1}(T_{\delta t}^{k-k_1}-I)Y_0^N\|^p\Big]\\
&
+\sum_{j=0}^{k_1-1}\E\Big[\Big\|(T_{\delta t}^{k_1-j}(T_{\delta t}^{k-k_1}-I)A(I-\mathbb L)f(Y^N_{j+1}+Z^N_{j+1})\Big\|^p\Big]\delta t
\\
&+\sum_{j=k_1}^{k-1}\Big\|T_{\delta t}^{k-j}Af(Y^N_{j+1}+Z^N_{j+1})\Big\|\delta t\\
\le& C(X_0,T,p)|(k-k_1)\delta t|^{\frac {\gamma p} 4}.
\end{align*}
Combining the above regularity estimates together, we finish the proof.
\end{proof}

\begin{cor}\label{rk-est-ynk}
Let $(I-\mathbb L) X_0\in \HH^{\gamma}$, $\gamma \in (0,\frac 32)$, $p\ge 1$.  Then $Y^N_k$,$1 \le k\le K$ satisfies that for any small $\epsilon>0$,
\begin{align}\label{rk-est-ynk-e}
\big\| (I-\mathbb L)Y^N_k\big\|_{L^p(\Omega;E)}
&\le C(X_0,T,p)(1+t_{k}^{\min(-\frac 18-\epsilon+\frac \gamma 4,0)}).
\end{align}
\end{cor}

To deduce the strong convergence rate in time, 
we  introduce an auxiliary process $\widetilde Y^N_k$, $k\le K$ with $Y^N_0=X^N(0)$, defined by 
\begin{align*}
\widetilde Y^N_{k+1}
=\widetilde Y^N_{k}
-A^2\delta t \widetilde Y^N_{k+1}
-P^N A f(Y^N(t_{k+1})+Z^N_{k+1})\delta t.
\end{align*}
Then we split the error of $Y^N_{k}-Y^N(t_{k})$ as 
\begin{align*}
\|Y^N_{k}-Y^N(t_{k})\|\le \|Y^N(t_{k})-\widetilde Y^N_{k}\|
+\|\widetilde Y^N_{k}-Y^N_{k}\|.
\end{align*} 
The first error is bounded as the following lemma.
The second error will be dealt with the interpolation arguments.

\begin{lm}\label{lm-ynk}
Let $(I-\mathbb L) X_0\in \HH^{\gamma}$, $\gamma \in (0,\frac 32)$, $N\in \N^+$ and  $p\ge1$. For $k\in \mathbb N^+, k\le K, T=K\delta t,$
there exists a positive constant $ C(X_0,t_k,p)$ such that \begin{align}\label{strong-ynk}
\big\|Y^N(t_k)-\widetilde Y^N_{k}\big\|_{L^p(\Omega;\HH)}
&\le C(X_0,t_k,p)\delta t^{\frac \gamma 2}.
\end{align}
\end{lm}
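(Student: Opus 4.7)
The key structural observation is that $\widetilde Y^N$ satisfies a \emph{linear} backward Euler scheme for the biharmonic operator $A^2$, with forcing $g(t) := -P^N A f(Y^N(t) + Z^N(t))$ evaluated at the exact numerical process. Pathwise in $\omega$, the estimate thus reduces to the classical error analysis of backward Euler for the linear parabolic equation $\partial_t u + A^2 u = g$, which I would execute by comparing mild forms:
\begin{align*}
Y^N(t_k) &= e^{-A^2 t_k} X^N(0) + \int_0^{t_k} e^{-A^2(t_k-s)} g(s)\,ds,\\
\widetilde Y^N_k &= T_{\delta t}^k X^N(0) + \sum_{j=0}^{k-1} T_{\delta t}^{k-j} g(t_{j+1})\,\delta t.
\end{align*}
Decompose $Y^N(t_k) - \widetilde Y^N_k = I_1 + II_1 + II_2$ with initial-data term $I_1 = (e^{-A^2 t_k} - T_{\delta t}^k) X^N(0)$, semigroup-approximation term $II_1 = \sum_j \int_{t_j}^{t_{j+1}} [e^{-A^2(t_k-s)} - T_{\delta t}^{k-j}]\, g(t_{j+1})\,ds$, and quadrature term $II_2 = \sum_j \int_{t_j}^{t_{j+1}} e^{-A^2(t_k-s)} [g(s) - g(t_{j+1})]\,ds$.

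Term $I_1$ is handled by a standard rational-approximation estimate for the Cahn--Hilliard semigroup combined with $(I-\mathbb L)X^N(0) \in \HH^\gamma$; term $II_1$ by a fractional version of the same estimate with the $A$ factor absorbed into the semigroup smoothing, together with uniform $L^p(\Omega; H)$-moment bounds on $f(X^N(t_{j+1}))$ coming from the polynomial growth of $f$ and the a priori $L^\infty$ estimates furnished by Corollary \ref{pri-xE}, Lemma \ref{nabla}, and the $L^6$-estimates inherent in the proof of Lemma \ref{pri-ynk}. Each contribution is then summed against an integrable singular kernel of the form $(t_{k-j})^{-1+\rho}$ for a suitable $\rho > 0$.

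The bulk of the analysis lies in $II_2$, where a direct appeal to temporal H\"older continuity of $g$ of order $\delta t^{\gamma/4}$ (obtained via the mean value theorem applied to $f$, the uniform $L^\infty$ control on $X^N$, and the temporal regularity of $Y^N$ and $Z^N$ given in Lemma \ref{pri-yn} and \eqref{reg-con1}), coupled with $\|e^{-A^2(t_k-s)} A\|_{\mathcal L(H)} \le C(t_k-s)^{-1/2}$, yields only the naive rate $\delta t^{\gamma/4}$ after summation. To reach the sharp $\delta t^{\gamma/2}$, one must further expand $Y^N(s) - Y^N(t_{j+1})$ (and analogously $Z^N(s) - Z^N_{j+1}$) through their own mild forms and integrate by parts in time; this produces an extra fractional factor of order $A^{-\gamma/2}$ that pairs with the a priori $\HH^\gamma$-regularity of $Y^N$ from Lemma \ref{pri-yn}, while being acted upon by the sharper---still integrable---smoothing $\|e^{-A^2(t_k-s)} A^{1-\gamma/2}\|_{\mathcal L(H)} \le C(t_k-s)^{-1+\gamma/4}$. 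Summation over $j$ then produces the additional $\delta t^{\gamma/4}$ needed to complete the estimate. This refinement---doubling the rate from the naive temporal H\"older exponent to the spatial regularity scale $\gamma/2$ by carefully exploiting parabolic smoothing in tandem with the higher-order cancellation in the quadrature residual---is the main technical hurdle of the proof.
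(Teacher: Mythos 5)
Your decomposition (initial data, rational approximation of the semigroup, quadrature residual) and your diagnosis of where the difficulty lies match the paper's proof, which splits the error into $I_1$ (the term with $e^{-A^2(t_k-s)}-T_{\delta t}^{k-[s]}$ acting on $AP^Nf(X^N(s))$) and the quadrature term $I_2$. For the deterministic part of the increment your mechanism is also the paper's: after Taylor-expanding $f$ about $X^N(s)$, the piece $f'(X^N(s))(e^{-A^2(t_{j+1}-s)}-I)X^N(s)$ is measured in a negative Sobolev norm, the multiplier estimate (Lemma \ref{sob}, Corollary \ref{rk-sob}) passes the negative power through $f'$, and $\|(e^{-A^2u}-I)A^{-\gamma}\|_{\LL(H)}\le Cu^{\gamma/2}$ paired against $\sup_t\|X^N(t)\|_{\HH^{\gamma}}$ gives $\delta t^{\gamma/2}$ outright. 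Your bookkeeping of this step is off, though: the full spread from $\HH^{-\gamma}$ to $\HH^{\gamma}$ (a factor $A^{-\gamma}$, not $A^{-\gamma/2}$) is what is consumed by the increment; $\|e^{-A^2t}A^{1-\gamma/2}\|_{\LL(H)}\le Ct^{-1/2+\gamma/4}$, not $t^{-1+\gamma/4}$; and the summation over $j$ does not, and need not, produce an extra $\delta t^{\gamma/4}$.

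The genuine gap is the stochastic part of the increment, $\int_s^{t_{j+1}}e^{-A^2(t_{j+1}-r)}P^N\,dW(r)$. The negative-norm trick does not double its order: for space-time white noise the $L^p(\Omega;\HH^{-\beta})$ norm of this integral is of exact order $\delta t^{1/2}$ for every $\beta>\frac12$ (the bottleneck is the length of the time interval, not spatial smoothing, since already the lowest mode contributes $(t_{j+1}-s)^{1/2}$), so your argument yields only $\delta t^{1/2}$ for this contribution, which is weaker than the claimed $\delta t^{\gamma/2}$ whenever $\gamma\in(1,\frac32)$. The paper's remedy (term $I_{23}$) is to apply the stochastic Fubini theorem, rewriting $\sum_j\int_{t_j}^{t_{j+1}}\int_s^{t_{j+1}}\cdots\,dW(r)\,ds$ as a single stochastic integral of the form $\int\big(\int_{t_j}^{r}\cdots\,ds\big)\,dW(r)$ and invoking the Burkholder--Davis--Gundy inequality; the martingale structure across subintervals then upgrades the rate to $\delta t^{1/2+\gamma/4-\epsilon}$. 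This is also precisely why the paper expands $f$ to second order around $X^N(s)$ rather than using the mean value theorem as you propose: the multiplier in front of the Wiener increment must be $f'(X^N(s))$, adapted at time $s\le r$, for the Fubini/BDG step to be legitimate, whereas a multiplier evaluated at a convex combination involving $X^N(t_{j+1})$ is anticipative; the resulting quadratic Taylor remainder is then handled by the $L^1\hookrightarrow\HH^{-1/2-\epsilon}$ embedding and the H\"older-$\frac\gamma4$ continuity of $X^N$ squared. A smaller but real issue: the ``standard rational-approximation estimate'' for your initial-data term with data only in $\HH^{\gamma}$ gives $\|(e^{-A^2t_k}-T_{\delta t}^{k})v\|\le C\delta t^{\gamma/4}\|v\|_{\HH^{\gamma}}$, again only half the claimed rate; near $t_k=0$ this term requires the nonsmooth-data variant carrying a $t_k$-singularity.
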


\begin{proof}

Denote $[s]_{\delta t}:=\max\{0,\delta t,\cdots,k\delta t,\cdots \}\cap [0,s]$ and $[s]=\frac {[s]_{\delta t}} {\delta t}$.
The mild forms of $Y^N(t_{k})$ and $\widetilde Y^N_{k}$ yield that 
\begin{align}\label{dec}
&\|Y^N(t_{k})-\widetilde Y^N_{k}\|_{L^p(\Omega;\HH)}\\\nonumber 
&\le
\Big\|\int_0^{t_{k}}(e^{-A^2(t_k-s)}-T_{\delta t}^{k-[s]})AP^Nf(Y^N(s)+Z^N(s))ds\Big\|_{L^p(\Omega;\HH)}\\\nonumber
&\quad+\Big\|\int_0^{t_{k}}T_{\delta t}^{k-[s]}AP^N\Big(f(Y^N(s)+Z^N(s))-f(Y^N({[s]_{\delta t}+\delta t})+Z^N_{[s]+1})\Big)ds\Big\|_{L^p(\Omega;\HH)}\\\nonumber
&:=I_1+I_2.
\end{align}
By the properties of $e^{-A^2t}$ and $T_{\delta t}^{k}$, the priori estimates of $Y^N$ and $Z^N$ and Lemma \ref{nabla}, the first term is estimated as follows.
For $\gamma\in (0,1)$, $\beta\in (0,\min(\frac 14+\frac {3\gamma}4,\frac 12))$,
\begin{align*}
I_1
&\le  C\sum_{j=0}^{k-1}\int_{t_j}^{t_{j+1}}(t_k-[s]_{\delta t})^{-\frac 12-\beta}\delta t^{\beta}
(1+\|X^N(s)
\|^3_{L^{3p}(\Omega;L^6)})ds\\
&\le C\sum_{j=0}^{k-1}\int_{t_j}^{t_{j+1}}(t_k-[s]_{\delta t})^{-\frac 12-\beta}\delta t^{\beta}
(1+s^{\min(-\frac 14+\frac {3\gamma }4,0)})ds\\
&\le C(T,X_0,p)\delta t^{\beta}.
\end{align*}
For  $1\le \gamma<\frac 32$ and $\beta\in (0,\frac 12)$, we have that
\begin{align*}
I_1
&\le  C\sum_{j=0}^{k-1}\int_{t_j}^{t_{j+1}}(t_k-[s]_{\delta t})^{-\frac 12+\beta}\delta t^{\beta+\frac \gamma4}
\Big\|\Big(1+\|X^N(s)\|_E^2+\|\nabla X^N(s)\|_E^2
\Big)\\
&\quad \|X^N(s)\|_{H^{\gamma}}\Big\|_{L^p(\Omega;\R)}ds\\
&\le C\sum_{j=0}^{k-1}\int_{t_j}^{t_{j+1}}(t_k-[s]_{\delta t})^{-\frac 12+\beta}\delta t^{\beta+\frac \gamma4}
\Big(1+s^{-\frac 34-\epsilon+\frac \gamma 2}\Big)ds\\
&\le C(T,X_0,p)\delta t^{\beta+\frac \gamma4}.
\end{align*}

Similarly, by using the Talyor formula and the mild form of $X^N$, we have 
\begin{align*}
I_{2}
&\le \Big\|\int_0^{t_{k}}T_{\delta t}^{k-[s]}AP^N\Big(f'(X^N(s))(e^{-A^2([s]_{\delta t}+\delta t-s)}-I)X^N(s)\Big)ds\Big\|_{L^p(\Omega;\HH)}\\
& +\Big\|\int_0^{t_{k}}T_{\delta t}^{k-[s]}AP^N\Big(f'(X^N(s))\int_{s}^{[s]_{\delta t}+\delta t}e^{-A^2([s]_{\delta t}+\delta t-r)}\\
&\qquad AP^Nf(X^N(r))dr\Big)ds\Big\|_{L^p(\Omega;\HH)}\\
& +\Big\|\int_0^{t_{k}}T_{\delta t}^{k-[s]}AP^N\Big(f'(X^N(s))\int_{s}^{[s]_{\delta t}+\delta t}e^{-A^2([s]_{\delta t}+\delta t-r)}P^NdW(r)\Big)ds\Big\|_{L^p(\Omega;\HH)}\\
& + \Big\|\int_0^{t_{k}}
T_{\delta t}^{k-[s]}AP^N\Big(\int_0^{1}f''(\lambda X^N(s)+(1-\lambda)X^N([s]_{\delta t}+\delta t))d\lambda\\
&\qquad \big(X^N(s)-X^N([s]_{\delta t}+\delta t)\big)^2\Big)ds\Big\|_{L^p(\Omega;\HH)}
:=I_{21}+I_{22}+I_{23}+I_{24}.
\end{align*}
Then the smoothing effect of $e^{-A^2t}$, Lemma \ref{nabla} and Corollary  \ref{rk-sob} yield that 
for $\gamma\in (0,1]$, $\beta\in (0,1)$, 
\begin{align*}
I_{21}
&\le  \Big\|\int_0^{t_{k}}T_{\delta t}^{k-[s]}A^{1+\frac 12} A^{-\frac 12}\Big((I-\mathbb L)f'(X^N(s))(e^{-A^2([s]_{\delta t}+\delta t-s)}-I)X^N(s)\Big)ds\Big\|_{L^p(\Omega;\HH)}\\
&\le \int_0^{t_{k}}(t_k-[s]_{\delta t})^{-\frac 34}\|(I-\mathbb L)f'(X^N(s))\\
&\qquad (e^{-A^2([s]_{\delta t}+\delta t-s)}-I)X^N(s)\|_{L^p(\Omega;\HH^{-1})}ds\\
&\le   \int_0^{t_{k}}(t_k-[s]_{\delta t})^{-\frac 34}\|(1+\|X^N(s)\|_E^2\\
&\qquad+\Big\|(I-\mathbb L)X^N(s)\|_{\HH^{\beta}}^2)
\|e^{-A^2([s]_{\delta t}+\delta t-s)}-I)X^N(s)\|_{\HH^{-\beta}}\Big\|_{L^p(\Omega;\R)}ds\\
&\le C(T,X_0,p)\delta t^{\frac \beta 4+\frac \gamma 4}.
\end{align*}
and for $\gamma\in (1,\frac 32)$, and any sufficiently small $\epsilon>0,$
\begin{align*}
I_{21}
&\le  \Big\|\int_0^{t_{k}}T_{\delta t}^{k-[s]}A^{1+\frac 34+\epsilon} A^{-\frac 34-\epsilon}\Big((I-\mathbb L)f'(X^N(s))\\
&\qquad (e^{-A^2([s]_{\delta t}+\delta t-s)}-I)X^N(s)\Big)ds\Big\|_{L^p(\Omega;\HH)}\\
&\le \int_0^{t_{k}}(t_k-[s]_{\delta t})^{-\frac 78-\epsilon}\|(I-\mathbb L)f'(X^N(s))\\
&\qquad (e^{-A^2([s]_{\delta t}+\delta t-s)}-I)X^N(s)\|_{L^p(\Omega;\HH^{-\frac 32-2\epsilon })}ds\\
&\le   \int_0^{t_{k}}(t_k-[s]_{\delta t})^{-\frac 78-\epsilon}\|(1+\|X^N(s)\|_E^2+\|\nabla X^N(s)\|_E^2\\
&\qquad+\Big\|(I-\mathbb L)X^N(s)\|_{\HH^{\gamma}}^2)
\|e^{-A^2([s]_{\delta t}+\delta t-s)}-I)X^N(s)\|_{\HH^{-\gamma}}\Big\|_{L^p(\Omega;\R)}ds\\
&\le C(t_k,X_0,p)\delta t^{\frac \gamma 2},
\end{align*}
where $C(t_k,X_0,p)\to \infty$ if $t_k\to 0$.
In particular, if $\gamma>\frac 54$, we have the uniform control $C(t_k,X_0,p)\le C(T,X_0,p).$ 

Similarly, we have that  for $\gamma\in (\frac 15,\frac 32)$ and any $\beta \in (0,1)$, 
\begin{align*}
I_{22}
&\le C\int_0^{t_{k}} (t_k-[s]_{\delta t})^{-\frac 34}\Big\|(1+\|X^N(s)\|_E^2+\|X^N(s)\|_{\HH^{\beta}}^2)\\
&\qquad \int_{s}^{[s]_{\delta t}+\delta t}\|e^{-A^2([s]_{\delta t}+\delta t-r)}A^{1-\frac \beta2}\| \|P^Nf(X^N(r))\|dr\Big\|_{L^p(\Omega;\R)}ds
\\
&\leq C\int_0^{t_{k}} (t_k-[s]_{\delta t})^{-\frac 34}(1+s^{\min(-\frac 14+\frac\gamma 2-\epsilon,0)})\delta t^{\frac 12+\frac \beta 4}s^{\min{(-\frac 14+\frac 34\gamma,0)}}ds\\
&\le C(T,X_0,p)\delta t^{\frac 12+\frac \beta 4},
\end{align*}
and for $\gamma\in (0,\frac 15]$, $\beta\in (0,\frac 12),$
\begin{align*}
I_{22}&\le C\int_0^{t_{k}} (t_k-[s]_{\delta t})^{-\frac 12}(1+s^{\min(-\frac 14+\frac\gamma 2-\epsilon,0)})\delta t^{\frac 12}s^{\min{(-\frac 14+\frac 34\gamma,0)}}ds\\
&\le C(T,X_0,p)\delta t^{\frac 12}.
\end{align*}
From the stochastic Fubini theorem, the Burkholder--Davis--Gundy inequality and H\"older's inequality, it follows that for $p\ge 2$, sufficiently small $\epsilon>0$ 
\begin{align*}
I_{23}&=\Big\|\sum_{j=0}^{k-1}\int_{t_j}^{t_{j+1}}T_{\delta t}^{k-[s]}Af'(X^N(s))\int_{s}^{t_{j+1}}e^{-A^2([s]_{\delta t}+\delta t-r)}P^NdW(r)ds\Big\|_{L^p(\Omega;\HH)}\\
&=\Big\|\sum_{j=0}^{k-1}\int_{t_j}^{t_{j+1}}\int_{t_j}^{r}T_{\delta t}^{k-[s]}Af'(X^N(s))e^{-A^2([s]_{\delta t}+\delta t-r)}P^NdsdW(r)\Big\|_{L^p(\Omega;\HH)}\\
&\le C_p\sqrt{\sum_{j=0}^{k-1}\int_{t_j}^{t_{j+1}}
\Big\|\int_{t_j}^{r}T_{\delta t}^{k-[s]}Af'(X^N(s))e^{-A^2(t_{j+1}-r)}P^Nds\Big\|_{L^p(\Omega;\LL_2^0)}^2dr}\\
&\le C_p\delta t^{\frac 12}\sqrt{\sum_{j=0}^{k-1}\int_{t_j}^{t_{j+1}}
\int_{t_j}^{r}\Big\|T_{\delta t}^{k-[s]}A^{1-\epsilon}A^{\epsilon}f'(X^N(s))e^{-A^2(t_{j+1}-r)}P^N\Big\|_{L^p(\Omega;\LL_2^0)}^2dsdr}.
\end{align*}
Then we have
\begin{align*}
I_{23} &\le C_p\delta t^{\frac 12}\Big(\sum_{j=0}^{k-1}\int_{t_j}^{t_{j+1}} (t_k-t_j)^{-1+\epsilon}\int_{t_j}^{r}\Big\|\Big(\|X^N(s)\|_{H^{2\epsilon}}^4+\|X^N(s)\|_{E}^4\Big)\\
&\quad \times \Big(\sum_{l=1}^N\Big\|e^{-A^2(t_{j+1}-r)}Q^{\frac 12}e_l\Big\|_{H^{2\epsilon}}^2+ \sum_{l=1}^N\Big\|e^{-A^2(t_{j+1}-r)}Q^{\frac 12}e_l\Big\|_E^2\Big)\Big\|_{L^{\frac p2}(\Omega)}dsdr\Big)^{\frac 12}\\
&\le C_p\delta t^{\frac 12}\Big(\sum_{j=0}^{k-1}\int_{t_j}^{t_{j+1}} (t_k-t_j)^{-1+\epsilon}\int_{t_j}^{r}(1+s^{\min{(-\frac 12+\gamma,0)}})(t_{j+1}-r)^{-\frac 14-\epsilon}dsdr\Big)^\frac 12\\
&=C_p\delta t^{\frac 12}\Big(\sum_{j=0}^{k-1} (t_k-t_j)^{-1+\epsilon}
\int_{t_j}^{t_{j+1}}(t_{j+1}-r)^{-\frac 14-\epsilon}
\int_{t_j}^{r}(1+s^{\min{(-\frac 12+\gamma,0)}})dsdr\Big)^\frac 12.
\end{align*}
For $\gamma\in(\frac 12,\frac 32),$
\begin{align*}
I_{23}&\le C(T,X_0,p)\delta t^\frac 12\Big(\sum_{j=0}^{k-1} (t_k-t_j)^{-1+\epsilon}\delta t\int_{t_j}^{t_{j+1}}(t_{j+1}-r)^{-\frac 14-\epsilon}dr\Big)^\frac 12\\
&\leq C(T,X_0,p)\delta t^\frac 12\delta t^{\frac 38-\frac \epsilon 2}.
\end{align*}
For $\gamma\in(0,\frac 12],$
\begin{align*}
I_{23}&=\Big\|\sum_{j=0}^{k-1}\int_{t_j}^{t_{j+1}}T_{\delta t}^{k-[s]}Af'(X^N(s))\int_{s}^{t_{j+1}}e^{-A^2([s]_{\delta t}+\delta t-r)}P^NdW(r)ds\Big\|_{L^p(\Omega;\HH)}\\
&\le C \sum_{j=0}^{k-1}\int_{t_j}^{t_{j+1}}(t_k-[s]_{\delta t})^{-\frac 12}\Big\|(\|X^N(s)\|_E^2+1)\int_{s}^{t_{j+1}}e^{-A^2([s]_{\delta t}+\delta t-r)}P^NdW(r)\Big\|_{L^p(\Omega;\HH)}ds\\
&\le C\sum_{j=0}^{k-1}\int_{t_j}^{t_{j+1}}(t_k-[s]_{\delta t})^{-\frac 12}s^{-\frac 14+\frac \gamma 2-\epsilon}(\int_s^{t_j+1}([s]_{\delta t}+\delta t-r)^{-\frac 14}dr)^\frac 12 ds\le C \delta t^{\frac 38}.
\end{align*}
Due to the continuity of $Z^N$ and $Y^N$ and the Sobolev embedding theorem, we obtain for small $\epsilon>0$,
\begin{align*}
I_{24}&=\Big\|\int_0^{t_{k}}
T_{\delta t}^{k-[s]}A^{1+\frac 14+\epsilon}A^{-\frac 14-\epsilon}P^N\Big(\int_0^{1}f''(\lambda X^N(s)+(1-\lambda)X^N([s]_{\delta t})(1-\lambda)d\lambda\\
&\qquad \big(X^N(s)-X^N([s]_{\delta t}+\delta t)\big)^2\Big)ds\Big\|_{L^p(\Omega;\HH)}\\
&\le
C\int_0^{t_{k}}(t_k-[s]_{\delta t})^{-\frac 58+\frac \epsilon 2}\Big\|\int_0^{1}f''(\lambda X^N(s)+(1-\lambda)X^N([s]_{\delta t}+\delta t))d\lambda\Big\|_{L^{2p}(\Omega;E)}\\
&\qquad \Big\|X^N(s)-X^N([s]_{\delta t}+\delta t)\Big\|^2_{L^{4p}(\Omega;\HH)}ds\le C(T,X_0,p)\delta t^{\frac \gamma 2}.
\end{align*}
Combining \eqref{dec} and the above regularity estimates, we complete the proof.
\end{proof}

From the above proof, it is not difficult to see that if the initial data $X_0$ is smooth enough, such as $\gamma>\frac 54,$ then the constant in $C(X_0,t_k,p)$ in Lemma \eqref{lm-ynk} is bounded by $C(X_0,T,p)$ independent of $k.$

Next, we show the optimal regularity of  $\widetilde Y^N_{k}$ and 
the optimal convergence analysis of $\widetilde Y^N_{k}-Y^N_k$ in $\HH^{-1}$.
\begin{lm}\label{lm-pri-y}
Let $(I-\mathbb L) X_0\in \HH^{\gamma}$, $\gamma \in (0,\frac 32)$, $p\ge1$.
Then $\widetilde Y^N_k$  satisfies 
\begin{align*}
\E\Big[\sup_{k\le K}\big\|(I-\mathbb L)\widetilde Y^N_k\big\|_{\HH^{\gamma}}^p\Big]
&\le C(X_0,T,p).
\end{align*}
\end{lm}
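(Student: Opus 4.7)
The plan is to exploit the fact that the recursion defining $\widetilde Y^N_k$ is \emph{linear} in $\widetilde Y^N_k$: the forcing term $P^N A f(Y^N(t_{k+1}) + Z^N_{k+1})$ depends only on quantities already controlled by Lemma \ref{pri-yn} and \eqref{reg-con}. Consequently, one can bypass the monotonicity/$\HH^{-1}$-to-$\HH$ bootstrap used in Lemma \ref{pri-ynk} and read off the $\HH^\gamma$-bound directly from the discrete variation-of-constants formula. Iterating the scheme yields
\begin{align*}
(I-\mathbb L)\widetilde Y^N_k
&= T_{\delta t}^k (I-\mathbb L)X^N(0)
- \delta t \sum_{j=0}^{k-1} T_{\delta t}^{k-j} A P^N (I-\mathbb L) f\bigl(Y^N(t_{j+1}) + Z^N_{j+1}\bigr),
\end{align*}
and the triangle inequality in $\HH^\gamma$ splits the analysis into a homogeneous and an inhomogeneous part.

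For the homogeneous part, $\|T_{\delta t}^k (I-\mathbb L)X^N(0)\|_{\HH^\gamma} \le \|(I-\mathbb L)X_0\|_{\HH^\gamma}$ is immediate, since $A^{\gamma/2}$ commutes with $T_{\delta t} = (I+A^2\delta t)^{-1}$ and the latter is a contraction on $\HH$. For the inhomogeneous part, I would invoke the discrete smoothing estimate
$$\|A^{1+\gamma/2} T_{\delta t}^{k-j}\|_{\LL(H)} \le C (t_k - t_j)^{-1/2 - \gamma/4},$$
combined with $\|f(u)\|_H \le C(1 + \|u\|_{L^6}^3)$ (since $f$ is cubic), to dominate each summand by $(t_k - t_j)^{-1/2 - \gamma/4}\bigl(1 + \|Y^N(t_{j+1})\|_{L^6}^3 + \|Z^N_{j+1}\|_{L^6}^3\bigr)$. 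The uniform $L^6$-bound of $(I-\mathbb L)Y^N(t_j)$ is the same ingredient used in the proof of Proposition \ref{prop-spa} and carries over verbatim to the semi-discretization (as already invoked in Lemma \ref{pri-yn}); the $L^6$-bound of $Z^N_j$ follows from \eqref{reg-con}.

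Since $1/2 + \gamma/4 < 7/8 < 1$ for $\gamma \in (0,\tfrac 32)$, the Riemann sum $\delta t \sum_{j=0}^{k-1} (t_k - t_j)^{-1/2 - \gamma/4}$ is bounded uniformly in $k$ and $\delta t$ by a constant depending only on $T$ and $\gamma$. Taking the supremum over $k \le K$, then the $L^p(\Omega)$-norm, and applying Minkowski's inequality in $\ell^1$ together with the a priori bounds then gives the claim. No substantive obstacle is anticipated: because the problem is linear in $\widetilde Y^N_k$ and the forcing is driven entirely by quantities whose moments have been controlled, the argument reduces essentially to a routine discrete semigroup convolution estimate. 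The mild awkwardness is purely bookkeeping, namely matching the exponents on the singular kernel against the regularity of the source so that summability of $(t_k-t_j)^{-1/2-\gamma/4}\delta t$ holds uniformly for $\gamma$ up to (but excluding) $3/2$.
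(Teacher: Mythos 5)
Your argument is correct, and it is essentially the argument the paper intends: the paper states this lemma without proof, and its natural justification is exactly the final ($\HH^{\gamma}$-regularity) step in the proof of Lemma \ref{pri-ynk} — the discrete mild form, the smoothing bound $\|A^{1+\gamma/2}T_{\delta t}^{m}\|_{\LL(\HH)}\le C(m\delta t)^{-\frac12-\frac\gamma4}$ with $\frac12+\frac\gamma4<1$, and $\|f(u)\|\le C(1+\|u\|_{L^6}^3)$ — which is what you carry out. Your additional observation that the recursion is linear in $\widetilde Y^N_k$, so that the $\HH^{-1}\to\HH\to L^6$ bootstrap of Lemma \ref{pri-ynk} can be skipped because the forcing involves only $Y^N(t_{j+1})+Z^N_{j+1}=X^N(t_{j+1})$, whose $L^6$-moments are already supplied by Lemma \ref{pri-yn} (via the Proposition \ref{prop-spa}-type $L^6$ estimate) and \eqref{reg-con}, is a correct and welcome simplification rather than a deviation.
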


\begin{lm}\label{lm-ynk1}
Assume that $(I-\mathbb L) X_0\in \HH^{\gamma}$, for all $\gamma \in (0,\frac 32)$, $N\in \N^+$ and  $p\ge1$. For $k\in \mathbb N^+, k\le K, T=K\delta t,$ there 
exist $\delta t_0\le 1$ and  $ C(X_0,T,p)>0$ such that 
for any $\delta t\le \delta t_0$, $N\in \N^+$, we have 
\begin{align}\label{strong-ynk-h}
\big\|\widetilde Y^N_{k}-Y^N_k\big\|_{L^p(\Omega;\HH^{-1})}
&\le C(X_0,T,p)\delta t^{\frac \gamma 2}.
\end{align} 
\end{lm}
\begin{proof}
From the definitions of  $Y^N_{k+1}$ and $\widetilde Y^N_{k+1}$, it follows that for some $\epsilon<2,$
\begin{align*}
&\quad\|Y^N_{k+1}-\widetilde Y^N_{k+1}\|_{\HH^{-1}}^2\\
&\le 
\|Y^N_{k}-\widetilde Y^N_{k}\|_{\HH^{-1}}^2
-2\|A^{\frac 12}(Y^N_{k+1}-\widetilde Y^N_{k+1})\|^2 \delta t \\
&\quad
-2\<P^N(f(X^N_{k+1})-f(Y^N(t_{k+1})+Z^N_{k+1})),(Y^N_{k+1}-\widetilde Y^N_{k+1})\>\delta t\\
&\le \|Y^N_{k}-\widetilde Y^N_{k}\|_{\HH^{-1}}^2
-2\|A^{\frac 12}(Y^N_{k+1}-\widetilde Y^N_{k+1})\|^2 \delta t \\
&\quad
-2\<f(X^N_{k+1})
-f(\widetilde Y^N_{k+1}+Z^N_{k+1}),(Y^N_{k+1}-\widetilde Y^N_{k+1})\>\delta t
\\
&\quad -
2\<f(\widetilde Y^N_{k+1}+Z^N_{k+1})-f(Y^N(t_{k+1})+Z^N_{k+1}),(Y^N_{k+1}-\widetilde Y^N_{k+1})\>\delta t\\
&\le \|Y^N_{k}-\widetilde Y^N_{k}\|_{\HH^{-1}}^2
-(2-\epsilon)\|A^{\frac 12}(Y^N_{k+1}-\widetilde Y^N_{k+1})\|^2 \delta t\\
&\quad+2C(\epsilon)\|(I-\mathbb L)(f(\widetilde Y^N_{k+1}+Z^N_{k+1})-f(Y^N(t_{k+1})+Z^N_{k+1}))\|^2_{\HH^{-1}}\delta t.
\end{align*}
By using the a priori estimates of $Y_{k+1}^N$, $X^N$ and $Z_{k+1}^N$, Lemmas \ref{sob} and \ref{lm-ynk}, we have that for any $\beta\in (0,1)$,
\begin{align*}
&\|Y^N_{k+1}-\widetilde Y^N_{k+1}\|_{L^{2p}(\Omega;\HH^{-1})}^2\\
&\le C\sum_{j=0}^{k-1}\|(I-\mathbb L)(f(\widetilde Y^N_{j+1}+Z^N_{j+1})-f(Y^N(t_{j+1})+Z^N_{j+1}))\|^2_{L^{2p}(\Omega;\HH^{-1})}\delta t\\
&\le C(X_0,p)\sum_{j=0}^{k-1}\Bigg\|(1+\|Y^N(t_{j+1})\|_{E}^4+\|\widetilde Y^N_{j+1}\|_{E}^4+\|Z^N_{j+1}\|_{E}^4+\|Y^N(t_{j+1})\|_{\HH^{\beta}}^4\\
&\quad+\|\widetilde Y^N_{j+1}\|_{\HH^{\beta}}^4+\|Z^N_{j+1}\|_{\HH^{\beta}}^4)\Bigg\|_{L^{4p}(\Omega; \mathbb R)}\|\widetilde Y^N_{j+1}-Y^N(t_{j+1})\|_{L^{4p}(\Omega; \HH^{-\beta})}^2\delta t\\
&\le C(X_0,T,p)\delta t^{\gamma},
\end{align*}
which completes the proof.
\end{proof}
Based on the interpolation method,  Lemmas \ref{lm-pri-y} and  \ref{lm-ynk1}, we obtain the following convergence result.

\begin{prop}\label{prop-str}
Under the condition of Lemma \ref{lm-ynk1},  there 
exist $\delta t_0\le 1$ and  $ C(X_0,T,p)>0$ such that 
for any $\delta t\le \delta t_0$, $N\in \N^+$,  we have 
\begin{align*}
\big\|Y^N(t_k)-Y^N_k\big\|_{L^p(\Omega;H)}
&\le C(X_0,T,p)\delta t^{\frac \gamma 2 -\epsilon},
\end{align*} 
where $\epsilon>0$ is sufficient small.
\end{prop}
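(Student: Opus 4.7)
The plan is to carry out, in the temporal direction, the same interpolation strategy used to establish Theorem \ref{tm-strong0}. I would first use the triangle inequality together with the auxiliary process $\widetilde Y^N_k$ to decompose
\[
\|Y^N(t_k) - Y^N_k\|_{L^p(\Omega;H)} \le \|Y^N(t_k) - \widetilde Y^N_k\|_{L^p(\Omega;H)} + \|\widetilde Y^N_k - Y^N_k\|_{L^p(\Omega;H)}.
\]
Since $Ae_0 = 0$, applying $\mathbb L$ to the recursions defining $Y^N_k$ and $\widetilde Y^N_k$, as well as to the semi-discrete equation at the grid points, shows that $Y^N(t_k)$, $\widetilde Y^N_k$ and $Y^N_k$ all share the common mean $\mathbb L X^N(0)$, so both differences lie in $\HH$ and their $H$-norms coincide with the $\HH$-norms. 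The first summand is then bounded directly by Lemma \ref{lm-ynk}, contributing $\delta t^{\gamma/2}$.

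For the second summand I would invoke the Sobolev interpolation inequality
\[
\|u\|_H \le C \|u\|_{\HH^{-1}}^{\gamma/(1+\gamma)} \|u\|_{\HH^{\gamma}}^{1/(1+\gamma)}, \qquad u \in \HH,
\]
followed by H\"older's inequality in the probability variable, giving
\[
\|\widetilde Y^N_k - Y^N_k\|_{L^p(\Omega;H)} \le C \|\widetilde Y^N_k - Y^N_k\|_{L^{2p}(\Omega;\HH^{-1})}^{\gamma/(1+\gamma)} \|\widetilde Y^N_k - Y^N_k\|_{L^{2p}(\Omega;\HH^{\gamma})}^{1/(1+\gamma)}.
\]
The negative-norm factor is delivered by Lemma \ref{lm-ynk1}, while the $\HH^{\gamma}$-factor is uniformly bounded in $k$ and $\delta t$ by combining Lemma \ref{pri-ynk} for $Y^N_k$ with Lemma \ref{lm-pri-y} for $\widetilde Y^N_k$ via the triangle inequality. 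Assembling the two contributions and using $\delta t \le \delta t_0 \le 1$ to consolidate the $\delta t$-powers then produces the stated rate.

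The principal obstacle is not this final assembly, which is mechanical once the ingredients are in hand, but the preparation of Lemma \ref{lm-ynk1}: the sharp $\HH^{-1}$-convergence of $\widetilde Y^N_k - Y^N_k$ relies on the one-sided monotonicity of $-f$ in $\HH^{-1}$, on the bilinear negative-Sobolev estimate of Lemma \ref{sob}, and on the stepsize restriction introduced at the start of the subsection, which is what simultaneously ensures solvability of the implicit step and closes the discrete Gronwall argument. Once Lemma \ref{lm-ynk1} is granted, the temporal argument is structurally identical to the spatial one in Theorem \ref{tm-strong0}, which is precisely why the interpolation framework extends cleanly from the semi- to the full discretization.
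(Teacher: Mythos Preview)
Your decomposition via $\widetilde Y^N_k$ and the treatment of the first summand $\|Y^N(t_k)-\widetilde Y^N_k\|_{L^p(\Omega;H)}$ by Lemma \ref{lm-ynk} are fine and match the paper. The gap is in the second summand. You propose to interpolate
\[
\|\widetilde Y^N_k-Y^N_k\|_{H}\le C\,\|\widetilde Y^N_k-Y^N_k\|_{\HH^{-1}}^{\gamma/(1+\gamma)}\|\widetilde Y^N_k-Y^N_k\|_{\HH^{\gamma}}^{1/(1+\gamma)},
\]
and then feed in Lemma \ref{lm-ynk1} and the $\HH^\gamma$ a priori bounds. But Lemma \ref{lm-ynk1} only delivers the rate $\delta t^{\gamma/2}$ in $\HH^{-1}$, so your interpolation yields
\[
\|\widetilde Y^N_k-Y^N_k\|_{L^p(\Omega;H)}\le C\,\delta t^{\,\gamma^2/(2(1+\gamma))},
\]
which is strictly weaker than the claimed $\delta t^{\gamma/2}$ for every $\gamma\in(0,3/2)$. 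The analogy with Theorem \ref{tm-strong0} breaks down precisely here: in the spatial case Proposition \ref{tm-strong} supplies the rate $\lambda_N^{-(1+\gamma)/2}$ in $\HH^{-1}$, i.e.\ an extra half-power beyond the target $H$-rate, and this extra half-power is exactly what the interpolation exponent $\gamma/(1+\gamma)$ consumes. The temporal $\HH^{-1}$ estimate carries no such surplus, so the argument is \emph{not} structurally identical.

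The paper avoids interpolation for this term altogether. It writes the mild form of the difference,
\[
\widetilde Y^N_k-Y^N_k=\sum_{j=0}^{k-1}\delta t\,T_{\delta t}^{k-j}AP^N\bigl(f(Y^N(t_{j+1})+Z^N_{j+1})-f(Y^N_{j+1}+Z^N_{j+1})\bigr),
\]
splits $A=A^{3/2}A^{-1/2}$, uses the discrete smoothing $\|T_{\delta t}^{k-j}A^{3/2}\|\le C(t_k-t_j)^{-3/4}$ (which is summable in $j$), and applies Lemma \ref{sob} to control $\|A^{-1/2}(f(\cdot)-f(\cdot))\|=\|f(\cdot)-f(\cdot)\|_{\HH^{-1}}$ by $\|Y^N(t_{j+1})-Y^N_{j+1}\|_{\HH^{-\beta}}$ for $\beta$ close to $1$. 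The resulting inequality is then closed using the already-established $\HH^{-1}$ estimates (Corollary \eqref{weak-h12} and Lemma \ref{lm-ynk1}) together with the $\HH^\gamma$ regularity, which lets $\beta\uparrow 1$ recover the full rate $\delta t^{\gamma/2}$. In short, the smoothing of the discrete semigroup is what lifts the negative-norm estimate to $H$ without the loss your interpolation incurs.
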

\begin{proof}
By the mild form of $\widetilde Y^N_k$ and $Y^N_k$ and Lemma \ref{sob},
we have for any $\beta<1$,
\begin{align*}
&\|\widetilde Y^N_k-Y^N_k\|\\
&\le 
\sum_{j=0}^{k-1}\|\int_{t_j}^{t_{j+1}}T_{\delta t}^{k-[s]}P^NA^{\frac 32}A^{-\frac 12}(f(Y^N([s]_{\delta t})+Z_{[s]}^N)-f(Y^N_{[s]}+Z^N_{[s]}))ds\|\\
&\le C\sum_{j=0}^{k-1}(t_k-t_{j})^{-\frac 34}
\big(1+\|Y^N(t_j)+Z_{j}^N\|_E^2+\|Y^N_{j}+Z^N_{j}\|_E^2+\|Y^N(t_j)+Z_{j}^N\|_{\HH^{\beta}}^2\\
&\quad +\|Y^N_{j}+Z^N_{j}\|_{\HH^{\beta}}^2\big)\|Y^N_j-Y^N(t_j)\|_{\HH^{-\beta}}
\delta t .
\end{align*}
Together with  Lemmas \ref{lm-ynk}, \ref{lm-pri-y} and \ref{lm-ynk1},  we obtain  for any $\beta\in (0,1)$ and  any sufficient small $\epsilon$,
\begin{align*}
\big\|Y^N(t_k)-Y^N_k\big\|_{L^p(\Omega;\HH^{-\beta})}
&\le \big\|Y^N(t_k)-\widetilde Y^N_k\big\|_{L^p(\Omega;\HH^{-\beta})}
+\big\|\widetilde Y^N_k-Y^N_k\big\|_{L^p(\Omega;\HH^{-\beta})}\\
&\le C(T,X_0,p)\delta t^{\frac \gamma 2 -\epsilon},
\end{align*}
which implies that 
\begin{align*}
\big\|Y^N(t_k)-Y^N_k\big\|_{L^p(\Omega;H)}
&\le \big\|Y^N(t_k)-\widetilde Y^N_k\big\|_{L^p(\Omega;H)}
+\big\|\widetilde Y^N_k-Y^N_k\big\|_{L^p(\Omega;H)}\\
&\le C(T,X_0,p)\delta t^{\frac \gamma 2-\epsilon}.
\end{align*}

\end{proof}

\begin{tm}\label{tm-strong1}
Let $(I-\mathbb L) X_0\in \HH^{\gamma}$ for all $\gamma \in (0,\frac 32)$, $N\in \N^+$ and  $p\ge1$.
There 
exist $\delta t_0\le 1$ and  $ C(X_0,T,p)>0$ such that 
for any $\delta t\le \delta t_0$, $N\in \N^+$, we have
\begin{align}\label{strong1}
\big\|X^N_k-X(t_k)\big\|_{L^p(\Omega;{H})}
&\le C(X_0,T,p)(\delta t^{\frac \gamma 2-\epsilon}+\lambda_N^{-\frac \gamma 2}),
\end{align}
where $\epsilon>0$ is sufficient small.
\end{tm}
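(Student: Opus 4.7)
The plan is to split the total error into a spatial piece and a temporal piece via the triangle inequality and then quote the two main results established earlier in Section \ref{sec-str}. More precisely, I would write
\begin{align*}
\big\|X^N_k-X(t_k)\big\|_{L^p(\Omega;H)}
&\le \big\|X^N_k-X^N(t_k)\big\|_{L^p(\Omega;H)}
+\big\|X^N(t_k)-X(t_k)\big\|_{L^p(\Omega;H)}.
\end{align*}
The second term on the right is the spatial semidiscretization error, which is already controlled by Theorem \ref{tm-strong0}, giving a bound of order $\lambda_N^{-\gamma/2}$.

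For the first term, the key observation is that both $X^N_k$ and $X^N(t_k)$ share the same stochastic convolution at the discrete time level: by construction $X^N_k=Y^N_k+Z^N(t_{k})$ and $X^N(t_k)=Y^N(t_k)+Z^N(t_k)$, so the $Z^N$ contribution cancels and
\begin{align*}
\big\|X^N_k-X^N(t_k)\big\|_{L^p(\Omega;H)}
&=\big\|Y^N_k-Y^N(t_k)\big\|_{L^p(\Omega;H)}.
\end{align*}
This is exactly the temporal error handled by Proposition \ref{prop-str}, which yields the bound $C(X_0,T,p)\,\delta t^{\gamma/2}$ valid for all $\delta t\le \delta t_0$ and all $N\in\N^+$.

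Combining the two bounds gives the announced estimate
\begin{align*}
\big\|X^N_k-X(t_k)\big\|_{L^p(\Omega;H)}
&\le C(X_0,T,p)\bigl(\delta t^{\gamma/2}+\lambda_N^{-\gamma/2}\bigr),
\end{align*}
with $\delta t_0$ and the constant inherited from Proposition \ref{prop-str} and Theorem \ref{tm-strong0}. There is no real obstacle here: all technical work (the cancellation of the stochastic convolution at the grid points, the interpolation-based $\HH^{-1}$/$\HH^{\gamma}$ argument for both the semidiscrete and fully discrete schemes, and the auxiliary process $\widetilde Y^N_k$ needed to decouple the time discretization error) has been carried out in the preceding lemmas and propositions. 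The theorem is essentially an assembly step, and the only thing to double-check is that the constants and the threshold $\delta t_0$ from the two ingredients are compatible, which is immediate since Theorem \ref{tm-strong0} has no stepsize restriction.
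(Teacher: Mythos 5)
Your proposal is correct and follows the paper's own argument: the paper proves this theorem by exactly the same triangle-inequality split $\|X^N_k-X(t_k)\|\le\|Y^N_k-Y^N(t_k)\|+\|X^N(t_k)-X(t_k)\|$ (using the cancellation of $Z^N$ at grid points) and then invokes Proposition \ref{prop-str} and Theorem \ref{tm-strong0}. The only cosmetic difference is that the paper additionally cites Lemma \ref{lm-ynk}, which is already subsumed in Proposition \ref{prop-str}.
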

\begin{proof}
Notice that 
\begin{align*}
\|X^N_k-X(t_k)\|&\le \|X^N_k-X^N(t_k)\|+\|X^N(t_k)-X(t_k)\|\\
&\le  \|Y^N_k-Y^N(t_k)\|+\|X^N(t_k)-X(t_k)\|.
\end{align*}
By using Lemma \ref{lm-ynk}, Proposition \ref{prop-str} and Theorem \ref{tm-strong0}, we complete the proof. 
\end{proof}

We also remark that 
the interpolation approach is also available for 
the numerical analysis on the strong convergence rates of the finite element method and the implicit Euler method for equation \eqref{spde} and will be studied further.

\section{Application to the cases of general noise and high dimension}
\label{sec-oth}
Now we extend this approach to study the  cases of  general 
noise and high dimension. For convenience, we assume that $\gamma\in (0,4]$.
In the case that 
$d=1$, $\gamma>0$, $\|A^{\frac {\gamma-2}2}Q^{\frac 12}\|_{\LL_2}<\infty$, 
we assume that $Q$ commutes with $A$ or $\gamma>\frac 12$.
The result of the case $\gamma>4$ is similar, we omit the details.

\begin{lm}\label{reg-tr}
Let $d=1$, $\gamma\in (0,4]$, $\|A^{\frac {\gamma-2}2}Q^{\frac 12}\|_{\LL_2}<\infty$,  $(I-\mathbb L) X_0\in \HH^{\gamma}$ and $p\ge1$.
Assume that $Q$ commutes with $A$ or $\gamma>\frac 12$, then
the unique mild solution  $X$  of equation \eqref{spde}  satisfies 
\begin{align}\label{reg-spa1}
\sup_{t\in [0,T]}\E\Big[\big\|(I-\mathbb L)X(t)\big\|_{\HH^{\gamma}}^p\Big]
&\le C(X_0,T,p)
\end{align}
and 
\begin{align}\label{reg-tm1}
\E\Big[\big\|X(t)-X(s)\big\|^p\Big]
&\le C(X_0,T,p)(t-s)^{\min(\frac {\gamma } 4,\frac 12)p}
\end{align}
for a positive constant $ C(X_0,T,p)$ and $0\le s\le t\le T$.
\end{lm}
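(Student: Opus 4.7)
The plan is to mimic the strategy used for Propositions \ref{prop-spa} and \ref{prop-tm} in the space-time white noise case, namely splitting $X = \mathbb{L}X + (I-\mathbb{L})X$ and $(I-\mathbb{L})X = (I-\mathbb{L})Y + (I-\mathbb{L})Z$, and estimating each piece separately. Since $\mathbb{L}Y(t) = \mathbb{L}X_0$ is constant and $\mathbb{L}Z$ is a scalar Brownian motion, the main work is on $(I-\mathbb{L})Y$ and $(I-\mathbb{L})Z$. First I would upgrade the stochastic convolution estimates \eqref{reg-con} and \eqref{reg-con1} to the present setting: the assumption $\|A^{(\gamma-2)/2}Q^{1/2}\|_{\LL_2^0} < \infty$ combined with the factorization method gives
\[
\sup_{t\in[0,T]}\E\bigl[\|(I-\mathbb{L})Z(t)\|_{\HH^\gamma}^p\bigr] \le C(T,p),
\qquad \E\bigl[\|Z(t)-Z(s)\|^p\bigr] \le C(T,p)(t-s)^{\min(\gamma/4,1/2)p}.
\]
The cap at $1/2$ for the temporal regularity comes from the Hölder exponent of Brownian motion once $\gamma\ge 2$, while for $\gamma<2$ the bound follows directly from the smoothing estimate $\|A^{-\alpha}(e^{-A^2h}-I)\| \lesssim h^{\alpha/2}$. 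The auxiliary ingredient is $\E\sup_t\|(I-\mathbb{L})Z(t)\|_E^p < \infty$: when $\gamma>1/2$ this is immediate from Sobolev embedding $\HH^\gamma \hookrightarrow E$, while when $Q$ commutes with $A$ one can diagonalize and estimate $Z$ eigenmode by eigenmode to obtain the supremum bound; this is precisely where the dichotomy in the hypothesis is used.

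Next, with $Z$ controlled in both $\HH^\gamma$ and $E$, I would redo the a priori estimates for $(I-\mathbb{L})Y$ following the proof of Lemma \ref{l2}: test the random PDE \eqref{vspde} first in $\HH^{-1}$ (to get uniform control of $\int_0^T\|(I-\mathbb{L})Y\|_{\HH^1}^2 + \|(I-\mathbb{L})Y\|_{L^4}^4\,ds$ thanks to the coercivity of $-f$ in the $\HH^{-1}$ pairing), and then in $\HH$ to get $\sup_{t}\|(I-\mathbb{L})Y\|^{2p}$ and $\int_0^T\|A(I-\mathbb{L})Y\|^2 ds$. After that, the mild form
\[
(I-\mathbb{L})Y(t) = e^{-A^2 t}(I-\mathbb{L})X_0 - \int_0^t e^{-A^2(t-s)}A(I-\mathbb{L})f(Y(s)+Z(s))\,ds
\]
together with $\|e^{-A^2(t-s)}A\|_{\LL(\HH,\HH^\gamma)} \lesssim (t-s)^{-1/2-\gamma/4}$ and a bootstrap on $\|Y\|_{L^6}$ (via Gagliardo--Nirenberg, exactly as in Proposition \ref{prop-spa}) yields \eqref{reg-spa1}. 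For $\gamma>2$ the singularity $(t-s)^{-1/2-\gamma/4}$ is not integrable, so one would iterate: first bound $Y$ in $\HH^{\gamma_1}$ for any $\gamma_1<2$ (using Remark \ref{reg-y}), then plug this improved regularity into $\|(I-\mathbb{L})f(Y+Z)\|_{\HH^{\gamma-2}}$ using a Sobolev-algebra / multiplier estimate analogous to Corollary \ref{rk-sob} and Remark \ref{rk-sob1}, and close the argument at level $\gamma$.

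For the temporal estimate \eqref{reg-tm1}, I would decompose $(I-\mathbb{L})(Y(t)-Y(s))$ into the three mild-form pieces as in Proposition \ref{prop-tm}: the initial-data piece yields $(t-s)^{\gamma/4}$ times $\|X_0\|_{\HH^\gamma}$ (or more precisely $(t-s)^{\min(\gamma/4,1/2)}$, since $\|A^{-\alpha}(e^{-A^2(t-s)}-I)\|$ saturates at $\alpha=2$); the two drift pieces are handled by the standard splits $\int_s^t$ and $\int_0^s$ combined with the a priori estimates of $Y$ and $Z$ in $\HH^\gamma \cap L^6$ just obtained. Adding the $Z$-contribution \eqref{reg-tm1}-type bound already established finishes the proof. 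The main obstacle is the high-regularity regime $\gamma \in [2,4]$: the nonlinearity $f(X)$ is a polynomial of degree $3$ and one must control $\|f(Y+Z)\|_{\HH^{\gamma-2}}$ with $\gamma-2\in[0,2]$, which requires fractional Leibniz / Sobolev-algebra estimates and careful bookkeeping of the $E$-norms of $Y$ and $Z$ (this is where the hypothesis $(I-\mathbb{L})X_0\in L^6$ enters to start the bootstrap, and where the $Q$-commutation or $\gamma>1/2$ condition is essential to keep $Z$ in $E$).
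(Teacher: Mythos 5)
Your proposal follows essentially the same route as the paper: bound the stochastic convolution in $\HH^{\gamma}$ and $E$ (with the same use of the ``$Q$ commutes with $A$ or $\gamma>\frac12$'' dichotomy), reuse the energy estimates of Lemma \ref{l2} and the $L^6$-bootstrap of Proposition \ref{prop-spa}, treat $\gamma<2$ by the mild form and smoothing, bootstrap to $\gamma\in[2,4]$ via multiplier estimates in the spirit of Lemma \ref{sob}, Corollary \ref{rk-sob} and Remark \ref{rk-sob1} together with Sobolev embeddings into $W^{1,\infty}$ and $W^{2,\infty}$, and prove the temporal bound by the three-piece mild-form decomposition of Proposition \ref{prop-tm}. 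The only point the paper spells out that you leave implicit is how the $\int_0^s$ drift piece reaches the full exponent $\frac12$ when $\gamma\ge2$, namely by shifting $A^{\frac12}$ onto $f(Y+Z)$ and using the $H^1$ bounds of $Y$ and $Z$, which is exactly the kind of use of the improved a priori regularity you indicate.
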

\begin{proof}
The conditions on $Q$ and $A$ ensure that 
\begin{align*}
\E\big[\sup\limits_{t\in[0,T]}\|Z(t)\|_E^p\big]+\sup_{t\in[0,T]}\E\big[\|(I-\mathbb L)Z(t)\|_{\HH^{\gamma}}^p\big]\le C(T,p).
\end{align*}
According to \eqref{pri-u2} and the proof of Proposition \ref{prop-spa}, we have
\begin{align*}
&\E\big[\|(I-\mathbb L)Y(t)\|_{L^{6}}^{2p}\big]+\E\Big[\Big(\int_0^T\|(-A)(I-\mathbb L)Y(s)\|^2ds\Big)^{p}\Big]\\
\le &C(T,X_0,p).
\end{align*}

Now it suffices to deduce the optimal regularity of $(I-\mathbb L)Y$. 
From the mild form of $(I-\mathbb L)Y(t)$ for equation \eqref{vspde}, it follows that if  $\gamma<2$, then 
\begin{align*}
&\|(I-\mathbb L)Y(t)\|_{\mathbb \HH^{\gamma}}\\
&\le \|e^{-A^2t}(I-\mathbb L)X_0\|_{\HH^{\gamma}}
+\int_0^t\big\|e^{-A^2(t-s)}A(I-\mathbb L)f(Y(s)+Z(s))\big\|_{\HH^{\gamma}}ds\\
&\le C\|X_0\|_{\HH^{\gamma}}
+C\int_0^t(t-s)^{-\frac 12}\big\|e^{-\frac 12A^2(t-s)}(I-\mathbb L)f(Y(s)+Z(s))\big\|_{\HH^{\gamma}}ds\\
&\le C\|X_0\|_{\HH^{\gamma}}
+C\int_0^t(t-s)^{-\frac 12-\frac {\gamma}4}\big(1+\|(I-\mathbb L)Y(s)\|_{L^6}^3+\|\mathbb LY(s)\|^3_{L^6}
\\
&\qquad+\|Z(s)\|_{L^6}^3\big)ds.
\end{align*}
By taking the $p$th moment and making use of the a priori estimates of  $\|\mathbb LY(s)\|_{L^6}$, $\|(I-\mathbb L)Y(s)\|_{L^6}$ and $\|Z(s)\|_{\HH^{\gamma}}$, we finish the proof for the case $\gamma<2$. For the case $\gamma \ge2$, repeating  the above arguments and using similar arguments in the proof of Lemma \ref{sob}, we obtain for a small $\epsilon>0$,
\begin{align*}
&\|(I-\mathbb L)Y(t)\|_{\mathbb \HH^{\gamma}}\\
&\le C\|X_0\|_{\HH^{\gamma}}
+C\int_0^t(t-s)^{-1+\frac {\epsilon}4}\big\|(I-\mathbb L)f(Y(s)+Z(s))\big\|_{\HH^{\gamma-2+\epsilon}}ds\\
&\le C\|X_0\|_{\HH^{\gamma}}
+C(X_0)\int_0^t(t-s)^{-1+\frac {\epsilon}4}
(1+\|X(s)\|_{W^{[\gamma-2+\epsilon],\infty}}^2)
 \|X(s)\|_{H^{\gamma-2+\epsilon}}ds.
\end{align*}
If $\gamma\in [2,4)$, from the estimate of $\|(I-\mathbb L)X(s)\|_{\HH^{\beta}}$, $\beta<2$, it only suffices to bound the moment of $\|X(s)\|_{W^{1,\infty}}$.
By the Sobolev embedding theorem, we have for some small $\epsilon_1$ and any $p\ge 1$.
\begin{align*}
\|X(s)\|_{L^p(\Omega;{W^{1,\infty}})}\le C\|X(s)\|_{L^p(\Omega; H^{\frac 32+\epsilon_1})}\le C(T,X_0,p).
\end{align*}
If $\gamma=4$,  based on the above estimate for the case $\gamma <4$,  we only need to estimate $\|X(s)\|_{W^{2,\infty}}$. By using the Sobolev embedding theorem, we obtain 
\begin{align*}
\|X(s)\|_{L^p(\Omega;{W^{2,\infty}})}\le C\|X(s)\|_{L^p(\Omega; H^{\frac 52+\epsilon_1})}\le C(T,X_0,p).
\end{align*}
Combining the estimates in all cases of $\gamma$, we complete the proof of the optimal spatial regularity estimate \eqref{reg-spa1}.

Now we are in the position to show the temporal regularity estimate  \eqref{reg-tm1}. 
It is not difficult to get that  for $s\le t$, $\gamma\le 4$,
\begin{align*}
\E\Big[\big\|Z(t)-Z(s)\big\|^p\Big]&\le C(T,p)(t-s)^{\min(\frac {\gamma } 4,\frac 12)p}.
\end{align*}
Similar arguments in the proof of Proposition \ref{prop-tm}
yield that if  $\gamma<2$,
\begin{align*}
\E\Big[\|Y(t)-Y(s)\|^p\Big]
\le C(T,X_0,p)(t-s)^{\frac {\gamma p} 4}.
\end{align*}
It suffices to prove that for the case $\gamma\in [2,4]$, we have
\begin{align*}
\E\Big[\|(I-\mathbb L)(Y(t)-Y(s))\|^p\Big]
\le C(T,X_0,p)(t-s)^{\frac {p} 2}.
\end{align*}
The mild form of $(I-\mathbb L)Y$ yields that
for a small $\epsilon_1>0$
\begin{align*}
&\|(I-\mathbb L)(Y(t)-Y(s))\|\\
&\le C\|(I-\mathbb L)X_0\|_{\HH^{\gamma}}(t-s)^{\frac \gamma4}
+\int_s^t (t-s)^{-\frac 12}\|f(Y(r)+Z(r))\|dr\\
&\quad+\int_0^s(s-r)^{-1+\frac \epsilon 2}\|A^{-1+\epsilon}(e^{-A^2(t-s)}-I)A^{-\frac 12}\|\|A^{\frac 12}(I-\mathbb L)f(Y(r)+Z(r))\|dr\\
&\le C\Big(\|(I-\mathbb L)X_0\|_{\HH^{\gamma}}(t-s)^{\frac \gamma4}+(t-s)^{\frac 12}\big(1+\sup_{r\in[0,T]}\|Y(r)\|_{L^6}^3+\sup_{r\in[0,T]}\|Z(r)\|_{L^6}^3\big) \Big)\\
&\quad+C(t-s)^{\frac 34-\frac \epsilon2}
\big(1+\sup_{r\in[0,T]}\|Y(r)\|_{H^1}^3+\sup_{r\in[0,T]}\|Z(r)\|_{H^1}^3\big).
\end{align*}
By taking the $p$th moment and using the a priori estimates of $Y$ and $Z$, we get 
\begin{align*}
\E\Big[\big\|(I-\mathbb L)(Y(t)-Y(s))\big\|^p\Big]&\le C(T,X_0,p)(t-s)^{\frac {p} 2}.
\end{align*}
Combining all the above estimates, we complete the proof.
\end{proof}

\begin{prop}
Let $d=1$, $\gamma\in (0,4]$, $\|A^{\frac {\gamma-2}2}Q^{\frac 12}\|_{\LL_2}<\infty$ and $(I-\mathbb L) X_0\in \HH^{\gamma}$. Suppose that $Q$ commutes with $A$ or $\gamma>\frac 12$. Then we have 
\begin{align*}
\big\|X^N(t)-X(t)\big\|_{L^p(\Omega;{H})}
\le C(T,X_0,p)\lambda_N^{-{\frac \gamma 2}},
\end{align*}
where $C(T,X_0,p)$ is some positive constant and $t\in[0,T]$.
\end{prop}
\begin{proof}
Since $Q$ commutes with $A$ or $\gamma>\frac 12$, the procedures in the proof of  Theorem \ref{tm-strong0} yields the spatial error estimate in the case that $\gamma \in (0,2)$. In the case that $\gamma\in [2,4]$, using  the arguments in the proof of Theorem \ref{tm-strong0} and combining with Corollary \ref{rk-sob}, we get the desired result.
\end{proof}

Next, we present the convergence result of the full discretization in one dimension case.
To simplify the procedures of the proof, we assume that $X_0$ is smooth enough.

\begin{prop}\label{d1-str}
Let $d=1$, $\|A^{\frac {\gamma-2}2}Q^{\frac 12}\|_{\LL_2}<\infty$, $\gamma\in (0,4]$,  $(I-\mathbb L) X_0\in \HH^{\alpha}$ with $\alpha>4$ and $p\ge1$. Assume that $Q$ commutes with $A$ or that $\gamma>\frac 32$. Then there 
exist $\delta t_0\le 1$ and  $ C(X_0,T,p)>0$ such that 
for any $\delta t\le \delta t_0$, $N\in \N^+$,
the numerical solution $X^N_k$, $k\le K$ is strongly convergent to $X$  and  satisfies 
\begin{align*}
\big\|X^N_k-X(t_k)\big\|_{L^p(\Omega;H)}
&\le C(X_0,T,p)(\delta t^{\min(\frac \gamma 2 -\epsilon,1)}+\lambda_N^{-{\frac \gamma 2}})
\end{align*}
for any sufficient small $\epsilon>0$.
\end{prop}

\begin{proof}
Following the procedures in the proofs of Lemmas \ref{lm-ynk} and  \ref{lm-ynk1}, 
it suffices to deduce the temporal strong convergence rate of $\|Y^N(t_k)-\widetilde Y^N_k\|$.
By repeating the procedures in the proof of Lemma
\ref{lm-ynk}, and using Corollary \ref{rk-sob} and Remark \ref{rk-sob1}, we have that
\begin{align*}
\big\|Y^N(t_k)-\widetilde Y^N_{k}\big\|_{L^p(\Omega;\HH)}
&\le C(X_0,T,p)\delta t^{\min(\frac \gamma 2,1)}.
\end{align*}
Then combining  the arguments in the proofs of Theorem \ref{tm-strong1}
and Proposition \ref{prop-str},  we complete the proof.
\end{proof}

In the case that 
$d=2, 3$, $\|A^{\frac 12}Q^{\frac 12}\|_{\LL_2}<\infty$,  we can also obtain the strong convergence rate of the proposed method.
Since the deterministic Cahn--Hilliard equation defines a gradient flow in $\HH^{-1}$ for the energy 
functional $J(u)=\frac 12\|\nabla u\|^2+\int_{\mathcal O}F(u)dx, u\in H^1$, we have the boundedness of $J$ in this case (see, e.g., \cite[Theorem 3.1]{KLM11}).
Then we follow the arguments in the proof of Propositions \ref{prop-spa} and \ref{prop-tm} 
and obtain the following optimal regularity estimates.

\begin{lm}\label{reg-tr1}
Let $d=2,3$, $\|A^{-1+\frac \gamma 2}Q^{\frac 12}\|_{\LL_2}<\infty$, $\gamma\in [3,4]$, $(I-\mathbb L) X_0\in \HH^{\gamma}$ and $p\ge1$.
Then
the unique mild solution  $X$  of equation \eqref{spde}  satisfies 
\begin{align*}
\sup_{t\in [0,T]}\E\Big[\big\|(I-\mathbb L)X(t)\big\|_{\HH^{\gamma}}^p\Big]
&\le C(X_0,T,p),
\end{align*}
and 
\begin{align*}
\E\Big[\big\|X(t)-X(s)\big\|^p\Big]
&\le C(X_0,T,p)(t-s)^{\frac { p} 2}
\end{align*}
for a positive constant $ C(X_0,T,p)$ and $0\le s\le t\le T$.
\end{lm}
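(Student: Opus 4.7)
The plan is to mirror the strategy used in Propositions \ref{prop-spa} and \ref{prop-tm} for the one-dimensional case, but to exploit the extra regularity of the noise encoded in $\|A^{-1+\gamma/2}Q^{1/2}\|_{\LL_2^0}<\infty$ in order to compensate for the worse Sobolev embeddings in dimensions $d=2,3$. First I would write $X=Y+Z$ with $Y$ solving the random PDE and $Z$ the stochastic convolution, and record the noise regularity: the hypothesis on $Q$ yields $\E[\sup_{t\in[0,T]}\|Z(t)\|_{\HH^{\gamma}}^p]\le C(T,p)$ together with $\E[\|Z(t)-Z(s)\|^p]\le C(T,p)(t-s)^{p/2}$ (since $\gamma\ge 3$), and through the Sobolev embedding $\HH^{\gamma}\hookrightarrow W^{1,\infty}$ (for $d\le 3$, $\gamma\ge 3$) this also gives uniform $W^{1,\infty}$ bounds on $Z$ in every $L^p(\Omega)$.

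Then, since $\gamma\in[3,4]$, the gradient-flow structure cited from \cite[Theorem 3.1]{KLM11} supplies a uniform-in-time bound on $J(X)$, hence on $\|\nabla X\|$ and $\|X\|_{L^4}$, in every moment. Combined with the $\|Z\|_E$ control this yields the a priori bound of $\|(I-\mathbb L)Y\|_{L^6}$ together with $\int_0^T\|(-A)(I-\mathbb L)Y(s)\|^2ds$ (as in \eqref{pri-u2}). I would then run the bootstrap from the mild form
\begin{align*}
\|(I-\mathbb L)Y(t)\|_{\HH^\gamma}
&\le \|(I-\mathbb L)X_0\|_{\HH^\gamma}
+C\int_0^t(t-s)^{-1+\epsilon/4}\|(I-\mathbb L)f(Y+Z)(s)\|_{\HH^{\gamma-2+\epsilon}}ds,
\end{align*}
bounding the last integrand by a Nemyskii/algebra argument of the type in Lemma \ref{sob} and Remark \ref{rk-sob1}: for $\gamma-2+\epsilon\in [1,2]$ one uses Corollary \ref{rk-sob} with $\|\cdot\|_E$ and $\|\nabla\cdot\|_E$, which are controlled by $\|\cdot\|_{H^{1+d/2+\epsilon_1}}\le \|\cdot\|_{\HH^\gamma}$; for $\gamma-2+\epsilon\in[2,3]$ (the case $\gamma$ close to $4$) one replaces $\|\nabla\cdot\|_E$ by $\|\cdot\|_{W^{2,\infty}}\lesssim \|\cdot\|_{\HH^{5/2+\epsilon_1}}$, valid in $d\le 3$. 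A Gronwall-type argument, closed by the already-established moments of $\|X\|_{W^{k,\infty}}$ and $\|X\|_{\HH^{\beta}}$ for $\beta<\gamma$, then gives the stated $\HH^\gamma$ bound.

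For the temporal estimate, I would decompose $Y(t)-Y(s)$ as in the proof of Proposition \ref{prop-tm}: the initial-data piece $e^{-A^2s}(e^{-A^2(t-s)}-I)X_0$ yields $(t-s)^{\gamma/4}$, the ``head'' integral $\int_s^t e^{-A^2(t-r)}Af(Y+Z)dr$ contributes $(t-s)^{1/2}$, and the ``tail'' integral $\int_0^s(e^{-A^2(t-r)}-e^{-A^2(s-r)})Af(Y+Z)dr$ gives $(t-s)^{3/4-\epsilon/2}$ by writing $A^{-1+\epsilon}(e^{-A^2(t-s)}-I)$ and using that $\|A^{1/2}(I-\mathbb L)f(Y+Z)\|$ has finite moments by the spatial regularity just established. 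Since $\gamma\ge 3>2$, all three rates are at least $1/2$, matching the claimed exponent. The $Z$-difference contributes $(t-s)^{1/2}$ from the noise regularity inequality, closing the estimate.

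The main obstacle is the step of controlling $\|(I-\mathbb L)f(Y+Z)\|_{\HH^{\gamma-2+\epsilon}}$ in $d=2,3$: while in one dimension $\HH^\beta$ is a Banach algebra for every $\beta>1/2$ and the derivatives of $f$ are easily managed, for $d=2,3$ we need $\gamma-2+\epsilon>d/2$ to use the algebra property, and controlling higher derivatives of the cubic nonlinearity requires $W^{2,\infty}$ bounds of $X$ (hence of $Z$), which force the regularity threshold $\gamma\ge 3$. Once the correct embeddings are lined up, the rest is a routine bootstrap in the spirit of Proposition \ref{prop-spa}, but choosing the exponents so that the singular factors $(t-s)^{-1+\epsilon/4}$ remain integrable is the delicate bookkeeping that drives the proof.
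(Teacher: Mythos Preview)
Your approach is essentially the paper's: invoke \cite[Theorem 3.1]{KLM11} to get the $H^1$ (hence $L^6$) bound on $Y$, then bootstrap via the mild form exactly as in Lemma \ref{reg-tr}, and handle the temporal increment as in Proposition \ref{prop-tm}. The paper's own proof is just a one-line pointer to these ingredients, so your outline is a faithful expansion of it.

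There is one slip in your bookkeeping. You write $\|\cdot\|_{W^{2,\infty}}\lesssim \|\cdot\|_{\HH^{5/2+\epsilon_1}}$ ``valid in $d\le 3$''; that embedding holds only for $d=1$. In $d=2,3$ Morrey requires $\HH^{2+d/2+\epsilon_1}$, i.e.\ $\HH^{3+\epsilon_1}$ or $\HH^{7/2+\epsilon_1}$. This does not break the argument: for $\gamma\in[3,4]$ the needed norm $\|(I-\mathbb L)f(X)\|_{\HH^{\gamma-2+\epsilon}}$ lives in the range $(1,2+\epsilon]$, and since $\gamma-2+\epsilon>d/2$ for $d\le 3$ one can simply use the Banach algebra property of $H^s$ ($s>d/2$) to bound $\|f(X)\|_{\HH^{\gamma-2+\epsilon}}$ by a power of $\|X\|_{\HH^{\gamma-2+\epsilon}}$, bypassing $W^{2,\infty}$ altogether. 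Alternatively, once the bootstrap has delivered $\HH^{4-}$ regularity, the correct embedding $\HH^{2+d/2+\epsilon_1}\hookrightarrow W^{2,\infty}$ is available anyway. Either route closes the step; just correct the exponent.
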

\begin{proof}
From \cite[Theorem 3.1]{KLM11} and the Sobolev embedding theorem, it follows that 
$\E\big[\sup\limits_{t\in[0,T]}\|(I-\mathbb L)Y(t)\|_{L^{6}}^{2p}\big]\le C(T,X_0,p)$. 
Then the arguments in Lemma \ref{reg-tr}, together with the Sobolev embedding theorem, yields the desired results. 
\end{proof}

Based on the above regularity estimates, we give the following strong convergence error estimate of the proposed numerical scheme. 
Since 
the Sobolev embedding $L^{\infty}\hookrightarrow H^1$ does not hold,  we make use of the regularity estimates of  exact and numerical solutions in this case.

\begin{prop}
Under the condition of Lemma \ref{reg-tr1}, there
exist $\delta t_0\le 1$ and  $ C(X_0,T,p)>0$ such that 
for any $\delta t\le \delta t_0$, $N\in \N^+$, the numerical solution $X^N_k$, $k\le K$ satisfies 
\begin{align*}
\big\|X^N_k-X(t_k)\big\|_{L^p(\Omega;{H})}
&\le C(X_0,T,p)(\delta t+\lambda_N^{-\frac \gamma 2}).
\end{align*}
\end{prop}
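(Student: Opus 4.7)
The plan is to split
\begin{align*}
\|X^N_k - X(t_k)\|_{L^p(\Omega;H)} \le \|X^N(t_k) - X(t_k)\|_{L^p(\Omega;H)} + \|X^N_k - X^N(t_k)\|_{L^p(\Omega;H)}
\end{align*}
and bound the spatial and temporal pieces separately, working directly in $H$ rather than via interpolation from $\HH^{-1}$, as indicated in the remark preceding the statement. The starting point is to establish uniform $L^p(\Omega)$-moments of $\|X\|_{\HH^\gamma}$, $\|X^N\|_{\HH^\gamma}$ and $\|X^N_k\|_{\HH^\gamma}$ for $\gamma\in[3,4]$ and $d\le 3$ by adapting Lemma \ref{reg-tr1} to the semi-discrete and fully discrete solutions. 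Since $\gamma\ge 3>1+d/2$ for $d\le 3$, the Sobolev embedding $\HH^\gamma\hookrightarrow W^{1,\infty}$ transfers these into uniform $W^{1,\infty}$-moments, which replace the role of $\HH^{-1}$-monotonicity in the subsequent analysis.

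For the spatial error I would adapt the proof of Theorem \ref{tm-strong0} but bypass the interpolation step. Splitting $Y^N-Y=(Y^N-P^NY)+(P^NY-Y)$, the second summand is bounded by $C\lambda_N^{-\gamma/2}\|Y\|_{\HH^\gamma}$, while the first is handled by testing the difference equation in $\HH$ against $Y^N-P^NY$ and dominating the nonlinearity through the $W^{1,\infty}$-bound on the Lipschitz coefficient of $f$; a Gronwall argument then yields the rate $\lambda_N^{-\gamma/2}$, which, combined with $\|Z^N-Z\|_{L^p(\Omega;H)}\le C\lambda_N^{-\gamma/2}$, gives the spatial bound. For the temporal error I reuse the auxiliary process $\widetilde Y^N_k$ of Section \ref{sec-str} and split the error into $\|Y^N(t_k)-\widetilde Y^N_k\|$ and $\|\widetilde Y^N_k-Y^N_k\|$. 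The first term is analysed by repeating the $I_1,I_{21},\ldots,I_{24}$ decomposition of Lemma \ref{lm-ynk} but estimating each piece directly in $H$: the H\"older-$\tfrac12$ continuity of $Z$ guaranteed by $\|A^{-1+\gamma/2}Q^{1/2}\|_{\LL_2^0}<\infty$ with $\gamma\ge 3$ provides the dominant temporal scale, and the Taylor and stochastic It\^o correction terms contribute $O(\delta t)$ via the smoothing of $e^{-A^2 t}$ combined with the $W^{1,\infty}$-moments. The second term is handled as in Lemma \ref{lm-ynk1} but tested in $\HH$: subtracting the defining schemes, invoking the local Lipschitz property of $f$ on $W^{1,\infty}$-bounded sets, and closing a discrete Gronwall inequality produces a bound of the same order $\delta t$.

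The main obstacle I anticipate is obtaining the sharp rate $\delta t$ in $\|Y^N(t_k)-\widetilde Y^N_k\|_{L^p(\Omega;H)}$ without losing an $\epsilon$-factor. The Taylor remainder term $I_{24}$ and the stochastic It\^o-correction term $I_{23}$ produce singular kernels of the form $(t_k-[s]_{\delta t})^{-\alpha}$ whose integrability must be preserved; in $d=2,3$ this is more restrictive because, at the boundary case $\gamma=3$ in $d=3$, the Sobolev embedding into $W^{2,\infty}$ fails, so second derivatives of the solution cannot be placed in $L^\infty$. Recovering the clean rate therefore requires carefully balancing the smoothing exponent of $e^{-A^2 t}A^{1-\eta}\sim t^{-1/2+\eta/2}$ against the $H$-valued H\"older exponent $\tfrac12$ of $X^N$, and exploiting the full $H$-regularity of the increments $X^N(s)-X^N([s]_{\delta t}+\delta t)$ rather than performing quadratic expansions that would require $W^{2,\infty}$.
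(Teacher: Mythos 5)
Your overall architecture matches the paper's: split into spatial and temporal parts, reuse the auxiliary process $\widetilde Y^N_k$, note that the $\HH^{-1}$-to-$\HH^{\gamma}$ interpolation is lossy here, and exploit the $W^{1,\infty}$-moments coming from $\HH^{\gamma}\hookrightarrow W^{1,\infty}$ for $\gamma\ge 3$. The gap is in how you propose to close the error equation in $H$. You test the difference equation against $Y^N-P^NY$ in $\HH$ and invoke ``a Gronwall argument'' (and, for the temporal part, ``a discrete Gronwall inequality'') with the nonlinearity controlled by the local Lipschitz constant of $f$ on $W^{1,\infty}$-bounded sets. That Lipschitz constant is the random variable $C\big(1+\sup_t\|X\|_{L^\infty}^2+\sup_t\|X^N\|_{L^\infty}^2\big)$, so Gronwall produces a prefactor of the form $\exp\big(CT(1+\sup_t\|X\|_{L^\infty}^2+\cdots)\big)$; taking $L^p(\Omega)$-norms then requires exponential moments of the solution, which are not established here (only polynomial moments follow from the regularity lemmas). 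This is precisely the obstruction the paper identifies as the central difficulty for non-globally Lipschitz $f$ --- ``the loss of the Gronwall inequality'' --- and the reason the monotonicity estimate is always performed in $\HH^{-1}$, where the one-sided Lipschitz constant $L_f$ of $f$ is deterministic and the quartic terms carry a good sign.

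The paper's actual route does not bypass the $\HH^{-1}$ step; it repurposes it. The $\HH^{-1}$ energy identity yields, besides $\|Y^N-P^NY\|_{\HH^{-1}}$, the dissipation byproduct $\big\|\int_0^{t}\|\nabla (Y(s)-Y^N(s))\|^2ds\big\|_{L^{p}(\Omega;\R)}\le C\lambda_N^{-\gamma}$ (and $\delta t^2$ for $\widetilde Y^N-Y^N$), with only polynomially integrable random prefactors. This time-integrated $H^1$ error is then fed into the \emph{mild} formulation of $Y^N-P^NY$ in $H$: the difference of nonlinearities is split into a piece carried by $(I-P^N)(Y+Z)$, estimated with the kernel $(t-s)^{-1/2}$, and a piece carried by $Y^N-P^NY$ itself, estimated in $H^1$ with the kernel $(t-s)^{-1/4}$ and then by Cauchy--Schwarz against the dissipation bound. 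No Gronwall lemma is invoked at all, so no exponential moments are needed. If you want to keep your plan, you must either prove exponential integrability of $\sup_t\|X\|_{L^\infty}^2$ (not available for free here) or replace the Gronwall closure by this dissipation-plus-mild-form bootstrap. A minor further point: the quadratic Taylor term $I_{24}$ in Lemma \ref{lm-ynk} only needs $f''$ evaluated at the solution to lie in $E$, i.e.\ an $L^\infty$ bound on the solution, not $W^{2,\infty}$; the $W^{2,\infty}$ requirement you worry about enters only through Remark \ref{rk-sob1} for dual norms $\HH^{-\eta}$ with $\eta>\tfrac52$, which the paper's argument for this proposition does not force you to use.
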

\begin{proof}
Following the proof of Theorem \ref{tm-strong0},
we have 
\begin{align*}
&\big\|Y^N(t)-P^NY(t)\big\|^2_{L^{2p}(\Omega;{\HH^{-1}})}
+\big\|\int_0^{t}\|\nabla (P^NY(s)-Y^N(s))\|^2ds\big\|_{L^{p}(\Omega;\R)}\\
&\le C(T,X_0,p)\lambda_N^{- \gamma }.
\end{align*} 
By the mild form of $Y^N(t)$ and $P^NY(t)$, we get 
\begin{align*}
\|Y^N(t)-P^NY(t)\|
&\le \|\int_0^te^{-A^2(t-s)}A(f(Y^N+Z^N)-f(Y+Z))ds\|
\\
&\le C\int_0^t(t-s)^{-\frac 12}\int_0^1\Big\|(f'(\theta (Y^N+ Z^N)
+(1-\theta)(Y+Z))\\
&\qquad ((I-P^N)Z+(I-P^N)Y)d \theta 
\Big)\Big\|ds\\
&\quad +C \int_0^t(t-s)^{-\frac 14}\|(f'(\theta (Y^N+ Z^N)
+(1-\theta)(Y+Z))\\
&\qquad(Y^N(t)-Y(t))\|_{H^1}d\theta ds.
\end{align*}
From the regularity estimates in Lemma \ref{reg-tr1}, it follows that 
\begin{align*}
\big\|Y^N(t_k)-P^NY(t_k)\big\|_{L^p(\Omega;{H})}\le C\lambda_N^{-\frac \gamma2}.
\end{align*}
The arguments in the proofs of Lemmas \ref{lm-ynk} and  \ref{lm-ynk1} lead to 
\begin{align*}
&\|\widetilde Y^N_k-Y^N_k\|^2_{L^{2p}(\Omega;\HH^{-1})}
+\big\|\int_0^{t_k}\|\nabla (\widetilde Y^N_{[s]+1}-Y^N_{[s]+1})\|^2 ds\big\|_{L^{p}(\Omega;\R)}\\
&\le C(T,X_0,p)\delta t^2.
\end{align*}
Then repeating the above procedures  for spatial error
estimate and the arguments in Theorem \ref{tm-strong1}, we complete the proof.
\end{proof}

\section{Appendix}

\textit{Proof of Lemma \ref{sob}}:
	The equivalence of norms in $\HH^{\beta}$ and 
	$H^{\beta}$ and H\"older inequality  yield that for $z\in \HH^{\beta}\cap E$, $\beta<1$,
	\begin{align*}
	&\|(I-\mathbb L)g(x)z\|_{\HH^{\beta}}^2\\
	&\le C\|(I-\mathbb L)g(x)z\|^2
	+C\int_0^L\int_0^L\frac {|g(x(\xi_1))z(\xi_1)-g(x(\xi_2))z(\xi_2)|^2}{|\xi_1-\xi_2|^{2\beta+1}}d\xi_1d\xi_2\\
	&\le C(1+\|x\|_E^4)\|z\|^2
	+C(1+\|x\|_E^4)\|z\|_{\HH^{\beta}}^2
	\\
	&\quad+C(1+\|x\|_E^2)\|(I-\mathbb L)x\|_{\HH^{\beta}}^2\|z\|_{E}^2\\
	&\le C(1+\|x\|_E^4+\|(I-\mathbb L)x\|_{\HH^{\beta}}^4)(\|z\|_{\HH^{\beta}}^2+\|z\|_{E}^2).
	\end{align*}
	According to the self-adjointness of $g$ and the Sobolev embedding theorem, we have 
	\begin{align*}
	&\|(I-\mathbb L)g(x)y\|_{\HH^{-1}}=\sup_{\|w\|_{\HH}\le 1}\Big|\<y,(I-\mathbb L)g(x)A^{-\frac 12}w\>\Big|\\
	&\le  \sup_{\|w\|_{\HH}\le 1}\|y\|_{\HH^{-\beta}}
	\|(I-\mathbb L)g(x)A^{-\frac 12}w\|_{\HH^{\beta}}\\
	&\le C\sup_{\|w\|_{\HH}\le 1}\|y\|_{\HH^{-\beta}}
	(1+\|x\|_E^2+\|(I-\mathbb L)x\|_{\HH^{\beta}}^2)
	(\|w\|_{\HH^{\beta-1}}+\|A^{-\frac 12}w\|_E)\\
	&\le C(1+\|x\|_E^2+\|(I-\mathbb L)x\|_{\HH^{\beta}}^2)\|y\|_{\HH^{-\beta}},
	\end{align*}
	which completes the proof.

\bibliographystyle{amsplain}
\bibliography{bib}

\end{document}